\newtheorem*{maintheorem*}{Main Theorem}
\newtheorem{theorem}{Theorem}[section]
\newtheorem{proposition}[theorem]{Proposition}
\newtheorem{corollary}[theorem]{Corollary}
\newtheorem{lemma}[theorem]{Lemma}
\newtheorem*{theorem*}{Theorem}
\newtheorem*{conjecture*}{Conjecture}
\newtheorem{conjecture}{Conjecture}
\theoremstyle{remark}
\newtheorem{remark}[theorem]{Remark}
\newtheorem*{example*}{Example}
\newcommand{\bvec}[1]{\boldsymbol{#1}}
\renewcommand\le{\leqslant}
\renewcommand\j{\mathbf j}
\newcommand\ba{\mathbf a}
\newcommand\m{\mathbf m}
\newcommand\N{\mathrm N_{q^2/q}}
\newcommand\n{\mathbf n}
\newcommand\p{\mathbf p}
\newcommand\q{\mathbf q}
\renewcommand\r{\mathbf r}
\renewcommand\v{\mathbf v}
\newcommand\X{\mathbf X}
\newcommand\x{\mathbf x}
\newcommand\y{\mathbf y}
\newcommand\cD{\mathcal D}
\newcommand\cH{\mathcal H}
\newcommand\cO{\mathcal O}
\newcommand\cP{\mathcal P}
\newcommand\cX{\mathcal X}
\newcommand\cV{\mathcal V}
\newcommand\cU{\mathcal U}
\newcommand\cR{\mathcal R}
\newcommand\cS{\mathcal S}
\newcommand\cQ{\mathcal Q}
\newcommand\bR{{\mathbb R}}
\newcommand\GF{{\rm GF}}
\newcommand\PG{{\rm PG}}
\newcommand\PGU{{\rm PGU}}
\def\sgn{{\rm sgn}}
\newcommand\Bil{{\rm Bil}}
\newcommand\rank{{\rm rank}}
\newcommand\Tr{{\rm Tr}_{q^2/q}}
\def\mod{{\rm mod} }
\def\diag{{\rm diag} }
\newcommand\comment[1]{}
\newcommand*{\shifttext}[2]{
  \settowidth{\@tempdima}{#2}
  \makebox[\@tempdima]{\hspace*{#1}#2}
}
\newcommand\redout{\bgroup\markoverwith
{\textcolor{red}{\rule[.4ex]{2pt}{0.8pt}}}\ULon}
\title{Pseudo-ovals of elliptic quadrics\\ as Delsarte designs of association schemes}
\author{John Bamberg\\
\small {\tt john.bamberg@uwa.edu.au}\\
\small School of Physics, Mathematics and Computing\\[-0.8ex]
\small University of Western Australia\\[-0.8ex]
\small 35 Stirling Highway \\[-0.8ex]
\small Perth WA 6009, Australia\\
\and
Giusy Monzillo\footnote{The research was supported by the Italian National Group for Algebraic and Geometric Structures and their Applications (GNSAGA-INdAM). } \\
\small {\tt giusy.monzillo@unibas.it}\\[0.8ex]
\small Dipartimento di Matematica, Informatica ed Economia\\[-0.8ex]
\small Universit\`a degli Studi della Basilicata\\[-0.8ex]
\small Viale dell'Ateneo Lucano 10 \\[-0.8ex]
\small 85100 Potenza, Italy\\
\and
Alessandro Siciliano$^*$ \\
\small{\tt alessandro.siciliano@unibas.it}\\[0.8ex]
\small Dipartimento di Matematica, Informatica ed Economia\\[-0.8ex]
\small Universit\`a degli Studi della Basilicata\\[-0.8ex]
\small Viale dell'Ateneo Lucano 10 \\[-0.8ex]
\small 85100 Potenza, Italy\\
}
\begin{document}

\date{}
\maketitle 
\thispagestyle{fancy}
\fancyhf{}
\renewcommand{\headrulewidth}{0pt}
%
\begin{abstract}
A {\em pseudo-oval} of a finite projective space over a finite field of odd order $q$ is a configuration of equidimensional subspaces that is essentially equivalent to a translation generalised quadrangle of order $(q^n,q^n)$ and a Laguerre plane of order $q^n$ (for some $n$). In setting out a programme to construct new generalised quadrangles, Shult and Thas \cite{st} asked whether there are pseudo-ovals consisting only of lines of an elliptic quadric ${Q}^-(5,q)$, non-equivalent to the \emph{classical example}, a so-called \emph{pseudo-conic}. To date, every known pseudo-oval of lines of ${Q}^-(5,q)$ is projectively equivalent to a pseudo-conic. Thas \cite{thas} characterised pseudo-conics as pseudo-ovals satisfying the \emph{perspective} property,
and this paper is on characterisations of pseudo-conics from an algebraic combinatorial point of view. In particular, we show that pseudo-ovals in $Q^-(5,q)$ and pseudo-conics can be characterised as certain Delsarte designs of an interesting five-class association scheme. These association schemes are introduced and explored, and we provide a complete theory of how pseudo-ovals of lines of $Q^-(5,q)$ can be analysed from this viewpoint.
\end{abstract}

{\it Keywords: Association scheme, Elliptic quadric, Pseudo-oval}             

{\it Math. Subj. Class.: 05E30, 51A50}

\section{Introduction}

A {\em pseudo-oval} of the finite projective space $\PG(3n-1,q)$ is a set of $q^n+1$ subspaces, each of dimension $n-1$, such that any three distinct elements of the set span the whole space. Such configurations are essentially \emph{equivalent} to translation generalised quadrangles of order $(q^n,q^n)$ \cite{pt}. For $q$ odd, a pseudo-oval of $\PG(3n-1,q)$ is equivalent to a Laguerre plane of order $q^n$. 

The {\em classical example} can be constructed in the following way. If we consider the $\GF(q^n)$-vector space underlying $\PG(2,q^n)$ as a $\GF(q)$-vector space,  each point of $\PG(2,q^n)$ becomes an $(n-1)-$subspace of $\PG(3n-1,q)$.  In particular, the $q^n+1$ subspaces corresponding to the points of a non-degenerate conic of $\PG(2,q^n)$ form a pseudo-oval, known as \emph{pseudo-conic}.  Thas \cite{thas} characterised pseudo-conics as pseudo-ovals of $\PG(3n-1,q)$, $q$ odd, satisfying the \emph{perspective} property (see Section \ref{sec_2} for more details).

Let $q$ be odd. For $n$ even any pseudo-conic belongs to an elliptic quadric $Q^-(3n-1,q)$, and for $n$ odd any pseudo-conic belongs to a non-degenerate parabolic quadric of $\PG(3n-1,q)$ \cite{st}. For $q$ even a pseudo-oval is never contained in  a non-degenerate quadric \cite{thas}.
 In the quest to construct new generalised quadrangles, Shult and Thas \cite{st} asked whether there are pseudo-ovals consisting only of lines of an elliptic quadric ${Q}^-(5,q)$, non-equivalent to the classical example. 
 To date, every known pseudo-oval of lines of ${Q}^-(5,q)$ is projectively equivalent to a pseudo-conic. Indeed, the discovery of a new pseudo-oval of $Q^-(5,q)$ would result in a new generalised quadrangle and  new Laguerre plane.

Under the Klein correspondence, pseudo-ovals contained in $Q^-(5,q)$ are mapped onto \emph{special sets} of $H(3,q^2)$ \cite{shult}. A {\em special set} of the Hermitian surface $H(3,q^2)$ is a set $\cS$ of $q^2+1$ points such that any  point of $H(3,q^2)$ not in $\cS$ is orthogonal  to 0 or 2 points of $\cS$ \cite{shult}.
 From a result by De Soete and Thas \cite{dst}, $q$ is necessarily odd. 
 Bader, O'Keefe, Penttila in \cite{bokp} and, independently,   Shult  in \cite{shult} constructed an example of a special set of $H(3,q^2)$. This consists of the $q^2+1$ points of an elliptic quadric over $\GF(q)$ which is the complete intersection of $H(3,q^2)$ with a hyperbolic quadric of $\PG(3,q^2)$ whose polarity commutes with the given unitary one. The special sets in this class are called {\em  of CP-type} \cite{ckm}. Theorem 3.1 in \cite{ckm} gives a characterisation of special sets of CP-type in terms of the unitary form defining $H(3,q^2)$. 
By \cite[Theorem 2.1]{cmp}, a special set of CP-type  corresponds to a pseudo-conic.

This paper is on characterisations of pseudo-conics from an algebraic combinatorial point of view. In particular, we will show (see Theorem \ref{lem_13}) that pseudo-ovals  and pseudo-conics in $Q^-(5,q)$ can be characterised as certain Delsarte designs of an interesting five-class association scheme. These association schemes are introduced and explored, and we provide a complete theory of how pseudo-ovals of lines of $Q^-(5,q)$ can be analysed from this viewpoint. 

The paper is organised as follows. Section \ref{background} contains some notation and introductory material on projective geometry, classical polar geometries and association schemes. In Section \ref{sec_2} we investigate the perspective property for lines of $Q^-(5,q)$. By representing subspaces of $\PG(5,q)$ in a matrix form, we give an algebraic characterisation of being in perspective for a triple of lines of $Q^-(5,q)$ (Proposition \ref{prop_2}). This allows us to translate the above algebraic condition in terms of a (local) geometric property involving certain configurations arising from non-degenerate hyperplanes (Proposition \ref{lem_12}).
In Section \ref{sec_3} an imprimitive five-class association scheme is constructed on certain points of $H(3,q^2)$. The relations of the scheme are defined by considering a function, introduced by Shult  in \cite{shult}, associated with the hermitian form of $H(3,q^2)$. As a by-product, the study of its quotient scheme produces a strongly regular graph isomorphic to the bilinear forms graph $\Bil_{2}(q)$ (Proposition \ref{prop_6}).
Section \ref{sec_4} is the real core of the whole paper: the opening theorem, providing the link between Shult's function and the property to be in perspective for the lines of $Q^-(5,q)$, allows us to consider pseudo-ovals of $Q^-(5,q)$, as well as  pseudo-conics, as subsets of a five-class association scheme on certain lines of $Q^-(5,q)$, isomorphic to the scheme explored in Section \ref{sec_3}. 
In this setting, from comparing the characteristic vector of a pseudo-oval with the common eigenspaces of the scheme,  we provide a characterisation of pseudo-conics in terms of the configurations introduced in Section \ref{sec2U}. Finally, Section \ref{sec_7}  contains some computational results leading to the conjecture that every pseudo-oval in $Q^-(5,q)$ is a pseudo-conic, for all $q$ odd.

\section{Background theory}\label{background}

For any given $n-$dimensional vector space $V=V(n,F)$ over the field $F$,  the {\em projective geometry} defined by $V$ is the partially ordered set of all subspaces of $V$, and it will be denoted by  $\PG(V)$. Two elements of $\PG(V)$ are said to be {\em disjoint} or {\em skew} if they intersect in the zero vector.  In order to simplify notation, for each proper subspace $U$ of $V$, that is an element of $\PG(V)$, we will use the same letter for the projective geometry defined by $U$. If $S \subset V$, we use  $\langle S \rangle$ to denote the subspace spanned by $S$. 

 If $F$ is the finite field $\GF(q)$ with $q$ elements, then we may write $V=V(n,q)$ and $\PG(n-1,q)$ instead of $\PG(V)$. The 1-dimensional subspaces are called {\em points}, the 2-dimensional subspaces are called {\em lines}, the 3-dimensional subspaces are called {\em planes}, and  the $(n-1)-$dimensional subspaces are called {\em hyperplanes} of $\PG(V)$. If $V$ is endowed with a non-degenerate alternating, quadratic or Hermitian form of Witt index $m$, the set of totally isotropic (or totally singular, in case of a quadratic form) subspaces of $V$ is a {\em polar geometry of rank $m$} of $\PG(V)$, which is called {\em symplectic, orthogonal}  or {\em unitary}, respectively. When $n=2r$, the vector space $V$ has precisely two (non-degenerate) quadratic forms, and they differ by their Witt index. It can be $r-1$ or $r$, and the quadratic form is \emph{elliptic} or \emph{hyperbolic}, respectively.
It is customary to set $\sgn(Q)=-$ in the  former case, and  $\sgn(Q)=+$ in the latter.  In terms of the associated projective geometry $\PG(V)$, the orthogonal polar geometry arising from an elliptic (resp. hyperbolic) quadratic form is known as an {\em elliptic} (resp. {\em hyperbolic})  {\em quadric} of $\PG(V)$, and it is denoted by $Q^-(n-1,q)$ (resp. $Q^+(n-1,q)$).
Our principal reference on projective geometries and polar geometries  is \cite{taylor}.

 Association schemes are important objects in algebraic combinatorics that generalise distance-regular graphs, linear codes, and combinatorial designs. As we shall see, the theory of association schemes can be a powerful tool when applied to some problems in finite geometry. An association scheme $\mathfrak X=(X,\{R_i\}_{0\le i\le d})$ is a set of \emph{vertices} $X$ and binary {\em relations} $R_i$ on $X$ satisfying the following:
\begin{enumerate}
    \item $R_0$ is the diagonal relation, that is, $R_0=\{(x,x):x \in X\}$.
    \item $\{R_i\}$ is closed under taking the opposite relation; that is, $R_j^*:=\{(x,y)\colon (y,x)\in R_j\}$ is in $\{R_i\}$, for each $j$.
    \item For each $i,j,k\in \{0,\ldots, d\}$, there exist constants $p_{i,j}^k$, such that if $(x,y)\in R_k$, then there are $p_{i,j}^k$ vertices $z$ such that $(x,z)\in R_i$ and $(z,y)\in R_j$. The $p_{i,j}^k$ are called {\em intersection numbers}.
\end{enumerate}
We will say that the association scheme is \emph{symmetric} if each relation is equal to its opposite. Let $\mathfrak X=(X,\{R_i\}_{0\le i\le d})$ be an association scheme with $d$ classes. For $0\le i\le d$, let $A_i$ be the adjacency matrix of the relation $R_i$, and $E_i$ the $i-$th primitive idempotent of the Bose-Mesner algebra of $\mathfrak X$ which projects onto the $i-$th maximal common eigenspace of $A_0,\ldots,A_d$. The  matrices $\cP$ and $\cQ$ defined by
\[
(A_0\ A_1\ \ldots \ A_d)=(E_0\ E_1\ \ldots \ E_d)\cP
\]
and 
\[
(E_0\ E_1\ \ldots \ E_d)=|X|^{-1}(A_0\ A_1\ \ldots \ A_d)\cQ
\]
are the {\em first} and the {\em second eigenmatrix} of $\mathfrak X$, respectively.  The reader is referred to \cite{bi,cam,del} for additional information on association schemes.

\section{Investigating the perspective property}\label{sec_2}
 In $V=V(6,q^2)$  consider the 6-dimensional $\GF(q)-$subspace 
 \[
 \widehat{V}=\{(x,x^q,y,y^q, z,z^q):x,y,z\in\GF(q^2)\}.
 \]
 Let $\PG(\widehat V)$ be the projective geometry defined by $\widehat V$. 
 For any vector $(x,x^{q},y,y^{q},z,z^q)\in\widehat V$ we will use the  short-hand notation $(x,y,z)_2$.

We consider the hyperbolic quadric $Q^+(5,q^2)$ of $\PG(5,q^2)$ (known as {\em Klein quadric})   defined by the (non-degenerate) quadratic form
$
Q(\X)=-X_1X_6- X_2X_5+X_3X_4$
on $V(6,q^2)$. For any given $v=(x,y, z)_2\in\widehat V$,  
\begin{equation}\label{eq_18}
\widehat{Q}(v)=Q|_{\widehat V}(v)=- xz^q-x^qz+y^{q+1}.
\end{equation}
It turns out that    $\widehat{Q}$  is a non-degenerate quadratic form of rank 2 on $\widehat V$ with associated symmetric form 
\begin{equation*}\label{eq_8}
\widehat{\bf b}(v,v')=-xz'^q-x^qz'+yy'^q+y^qy'-zx'^q -z^qx'.
\end{equation*}
 Therefore, $\widehat{Q}$  gives rise to an elliptic quadric  $Q^-(5,q)$ of $\PG(\widehat{V})$ embedded in $Q^+(5,q^2)$.
 For any  subspace  $W$ of $\widehat{V}$, set 
\[
W^\perp=\{v \in \widehat{V}:  \widehat{\bf b}(v,u)=0, \mathrm{\ for\ all\ }u\in W \}.
\] 
In the following, $\Tr$ and ${\N}$ will denote the {\em relative trace} and {\em norm} functions from $\GF(q^2)$ onto $\GF(q)$.
Let $F$ be a $\GF(q)-$linear transformation from $\GF(q^2)$	to itself. Then, $F$ can be represented by a unique polynomial over $\GF(q^2)$ of type $F(x)=ax+bx^q$. Such a polynomial is called a $q-${\em polynomial over }$\GF(q^2)$ \cite[Chapter 3]{ln}. 
The trivial $q-$polynomial will be denoted by $I$. The {\em adjoint} of a linearised polynomial $F(x)=ax+bx^q$, 	with respect to the symmetric bilinear form $(a,b)\rightarrow \Tr(ab)$, is given by $F^*(x)=ax+b^qx^q$. 

 In $\widehat V$, any line $l$ is written as 
\[
l=\{(F_0(x),F_1(x),F_2(x))_2:x \in\GF(q^2)\}
\]
where $F_0,F_1,F_2$  are $q-$polynomials over $\GF(q^2)$; for short, we will write  $l=L(F_0,F_1,F_2)$. The triple $(F_0,F_1,F_2)$ is determined by $l$ up to a right factor of proportion, which is a non-singular $q-$polynomial.
 Since a 4-dimensional subspace of $\widehat V$ is a 2-dimensional subspace in the dual space $\widehat V^*$, any such a subspace $T$ can be represented  by three  $q-$polynomials $H_0,H_1,H_2$ over $\GF(q^2)$. A way to write equations for $T$ is the following. Fix an element  $\theta\in \GF(q^2)\setminus\GF(q)$.
Let $\cH_i$ be the $2\times 2$ Dickson matrix\footnote{The \emph{Dickson matrix} of the $q-$polynomial $\sum_{i=0}^{n-1}a_ix^{q^i}\in\GF(q^n)[x]$ is
$\begin{pmatrix}
a_0 & a_1 & \cdots & a_{n-1}\\
a_{n-1}^q & a_0^q & \cdots & a_{n-2}^q\\
\vdots & \vdots & \vdots & \vdots \\
a_1^{q^{n-1}} & a_2^{q^{n-1}} & \cdots & a_0^{q^{n-1}}\\
\end{pmatrix}$.} associated with $H_i$,  $i=1,2,3$, then $T$ has  equations
\[
\begin{array}{lcl}
\begin{pmatrix}
1 & 1 \\
\theta &\theta^q
\end{pmatrix}
\begin{pmatrix}
\cH_0 & \cH_1 & \cH_2
\end{pmatrix}
\begin{pmatrix}
x \\ x^q \\ y \\ y^q \\z \\ z^q
\end{pmatrix}
=0
\end{array}.
\]

For short,  write $T=\pi(H_0,H_1,H_2)$. The triple $(H_0,H_1,H_2)$ is determined by $T$ up to a left factor of proportion, which is a non-singular $q-$polynomial. It is easy to check that a line $L(F_0,F_1,F_2)$ is contained in the subspace $\pi(H_0,H_1,H_2)$ if and only if 
\begin{equation*}\label{eq_6}
H_0\circ F_0+H_1\circ F_1+H_1\circ F_1=0;
\end{equation*}
where $H\circ F$ is the $q-$polynomial $H(F(x))\,\mod\,(x^{q^2}-x)$.

\comment{
in the matrix form
\[
l=\left\{\begin{pmatrix}
F_0 \\
F_1 \\
 F_2
\end{pmatrix}
\begin{pmatrix}
x\\
x^q
\end{pmatrix}
:x \in \GF(q^2)
\right\},
\]
where $F_0,F_1,F_2$  are two-by-two Dickson matrices and $\begin{pmatrix}
F_0 \\
F_1 \\
 F_2
\end{pmatrix}$ has rank 2; we will use the  short-hand notation $l=L(F_0,F_1,F_2)$. The triple $(F_0,F_1,F_2)$ is determined by $l$ up to a non-singular Dickson matrix on the right side.
}

The line $l=L(F_0,F_1,F_2)$ is totally singular with respect to the symmetric form $\widehat{\bf b}$ if and only if 
\begin{equation}\label{eq_4}
 F_2^*\circ K\circ F_0-F_1^*\circ K\circ F_1+ F_0^*\circ K\circ F_2=0,
\end{equation}
where  $K(x)=x^q$ (note that $K^*=K$). Let $F_i(x)=f_ix+g_ix^q$, $i=0,1,2$. Then, Eq.\,\eqref{eq_4} is equivalent to
\begin{equation}\label{eq_5}
\left\{
\begin{array}{lcl}
f_2g_0^q+ f_0 g_2^q & = & f_1g_1^q\\[.1in]
f_0 f_2^q+f_0^qf_2+ g_0 g_2^q+g_0^qg_2 & = & f_1^{q+1}+g_1^{q+1}
\end{array}
\right..
\end{equation}
Let $l_1$, $l_2$, $l_3$ be mutually skew  lines of $\PG(\widehat V)$, $T_i$ be a 4-dimensional space containing $l_i$ but skew to $l_j$ and $l_k$, and  $s_k=T_i\cap T_j$, with $\{i,j,k\}=\{1,2,3\}$.  The space spanned by $s_i$ and $l_i$ will be denoted by $\Sigma_i$, with $i=1,2,3$. If $\Sigma_1$, $\Sigma_2$ and $\Sigma_3$ have non-trivial intersection, then $\{l_1, l_2, l_3\}$ and $\{T_1,T_2,T_3\}$ are said to be {\em in semi-perspective}; if $\Sigma_1$, $\Sigma_2$ and $\Sigma_3$ share  a line, then $\{l_1, l_2, l_3\}$ and $\{T_1,T_2,T_3\}$ are said to be {\em in perspective}. For our aims, if  $l_1$, $l_2$, $l_3$ are lines of $Q^-(5,q)$, we set  $T_i=l_i^\perp$ and we will simply say that $l_1, l_2, l_3$ are in semi-perspective or perspective. 

Since for $q$ even pseudo-ovals are never contained in  an orthogonal polar geometry \cite{thas}, from now on we assume $q$ is  odd.

 Let $\theta\in\GF(q^2)\setminus\GF(q)$ be taken such that $\theta^2=\xi$ with $\xi$ a non-square in $\GF(q)$, i.e., $\theta^q=-\theta$. 

The following result translates  \cite[Theorem 5.1]{thas} in terms of the projective geometry $\PG(\widehat V)$. 
\begin{proposition}\label{prop_2}
Consider the three lines of the $Q^-(5,q)$, arising from $\widehat Q$,
\[
l=L(I,0,0), \ \ \ m=L(0,0,I), \ \ \  n=L(F_0,F_1,F_2),
\]
with $F_i(x)=f_ix+g_ix^q$, $i=0,1,2$, spanning the whole space. Then, $l,m,n$ 
are in perspective if and only if  
$f_0^qf_2+g_0g_2^q \in\GF(q)$.
\end{proposition}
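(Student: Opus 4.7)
The plan is to reduce "$l,m,n$ in perspective" to a $q$-polynomial identity and then extract the stated scalar condition by a coefficient comparison. First I would use the explicit form of the bilinear polarity $\widehat{\bf b}$ to compute the three polar subspaces: one sees immediately that $l^\perp=\{(x,y,0)_2:x,y\in\GF(q^2)\}$ and $m^\perp=\{(0,y,z)_2:y,z\in\GF(q^2)\}$, while $n^\perp$ is cut out in $\widehat V$ by the single $\GF(q^2)$-valued equation
\[
-xg_2^q-x^qf_2+yg_1^q+y^qf_1-zg_0^q-z^qf_0=0
\]
(its $q$-conjugate being redundant). Writing $J(y):=g_1^qy+f_1y^q$, $H_0(z):=g_0^qz+f_0z^q$, $H_2(x):=g_2^qx+f_2x^q$, so that $n^\perp$ is the locus $J(y)=H_0(z)+H_2(x)$, the pairwise intersections reduce to
\[
s_3=\{(0,y,0)_2\},\quad s_1=\{(0,y,z)_2:J(y)=H_0(z)\},\quad s_2=\{(x,y,0)_2:J(y)=H_2(x)\},
\]
and taking spans yields $\Sigma_1=\{(x,y,z)_2:J(y)=H_0(z)\}$, $\Sigma_2=\{(x,y,z)_2:J(y)=H_2(x)\}$, and $\Sigma_3=\{(F_0(t),u,F_2(t))_2:t,u\in\GF(q^2)\}$.

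The hypothesis that $\{l,m,n\}$ spans $\PG(\widehat V)$ is equivalent to $F_1$, hence $J$, being bijective as a $\GF(q)$-linear map on $\GF(q^2)$. Parametrising $\Sigma_3$ by $(t,u)\in\GF(q^2)^2$, the condition to also lie in $\Sigma_1$ becomes $J(u)=H_0(F_2(t))$, and to lie in $\Sigma_2$ becomes $J(u)=H_2(F_0(t))$. Since $J$ is invertible, each of these equations uniquely determines $u$ in terms of $t$, so that the triple intersection $\Sigma_1\cap\Sigma_2\cap\Sigma_3$ is in bijection with
\[
\{t\in\GF(q^2):\Psi(t)=0\},\qquad \Psi:=H_0\circ F_2-H_2\circ F_0\pmod{x^{q^2}-x},
\]
via $u=J^{-1}(H_0F_2(t))=J^{-1}(H_2F_0(t))$. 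As $\dim_{\GF(q)}\ker\Psi\in\{0,1,2\}$, the three solids $\Sigma_1,\Sigma_2,\Sigma_3$ share a projective line precisely when $\Psi\equiv 0$, i.e., when $H_0\circ F_2\equiv H_2\circ F_0$ modulo $x^{q^2}-x$.

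A direct expansion using $t^{q^2}=t$ gives
\[
(H_0\circ F_2)(t)=(f_0g_2^q+g_0^qf_2)\,t+(f_0f_2^q+g_0^qg_2)\,t^q,
\]
\[
(H_2\circ F_0)(t)=(f_2g_0^q+g_2^qf_0)\,t+(f_2f_0^q+g_2^qg_0)\,t^q.
\]
The coefficients of $t$ agree automatically; equating the coefficients of $t^q$ gives
\[
f_0f_2^q+g_0^qg_2\ =\ f_0^qf_2+g_0g_2^q,
\]
which is precisely $(f_0^qf_2+g_0g_2^q)^q=f_0^qf_2+g_0g_2^q$, equivalently $f_0^qf_2+g_0g_2^q\in\GF(q)$, as claimed.

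The delicate point is the dimensional accounting in the step in which one passes from $\Psi$ to the geometric perspective condition: a merely non-trivial $\GF(q)$-kernel of $\Psi$ produces only a common point of the three solids, so one must insist on the stronger condition that $\Psi$ vanishes identically. Of the two coefficients of $\Psi$, the one on $t$ vanishes by inspection, and the whole geometric content of the proposition is concentrated in the coefficient of $t^q$. Non-generic configurations, in which some $F_i$ degenerates enough to disturb the claimed dimensions of the $s_i$ or $\Sigma_i$, would need a separate argument (for instance, by an orthogonal coordinate change fixing $l$ and $m$ that moves $n$ into generic position), but they do not affect the final scalar identity.
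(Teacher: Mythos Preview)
Your argument is correct and follows the same skeleton as the paper's proof: compute $T_i=l_i^\perp$, the lines $s_i=T_j\cap T_k$, the solids $\Sigma_i=\langle l_i,s_i\rangle$, and reduce $\Sigma_1\cap\Sigma_2\cap\Sigma_3$ to the kernel of a single $q$-polynomial. In fact your $H_i$ is exactly the paper's $F_i^*\circ K$, so your governing identity $H_0\circ F_2\equiv H_2\circ F_0$ coincides with the paper's $F_0^*\circ K\circ F_2=F_2^*\circ K\circ F_0$. The one genuine difference is in the last simplification: you observe directly that the $t$-coefficient of $\Psi$ is symmetric in the indices $0$ and $2$ and hence cancels, whereas the paper first rewrites the identity via the totally-singular relation~\eqref{eq_4} and then invokes~\eqref{eq_5} to kill that coefficient. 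Your route is thus slightly more elementary, and in particular never uses that $n$ lies on the quadric. Finally, your closing caveat about non-generic configurations is unnecessary here: under the stated hypotheses all three $F_i$ are invertible (cf.\ Remark~\ref{rem_1}), so the dimensions of the $s_i$ and $\Sigma_i$, and the injectivity of your parametrisation $t\mapsto (F_0(t),J^{-1}H_0F_2(t),F_2(t))_2$, are automatic.
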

\begin{proof}
As $\langle l,m \rangle=\{(x,0,z)_2:x,z \in\GF(q^2)\}$ and $n$ trivially intersects $\langle l,m \rangle$, then  $F_1$ is invertible.
We set $T_1=l^{\perp}$, $T_2=m^{\perp}$, $T_3=n^{\perp}$. Straightforward calculation yields 
\[
T_1=\pi(0,0,I),\quad  T_2=\pi(I,0,0),\quad  T_3=\pi(F_2^*\circ K,-F_1^*\circ K,F_0^*\circ K).
\]

Further,
\[
\begin{array}{ccccl}
s_3 & = & T_1\cap T_2& = & L(0,I,0), \\[.05in]
s_2 & = & T_1\cap T_3& = & L(I,(K\circ F_2 \circ F_1^{-1}\circ K)^*,0),\\[.05in]
s_1 & = & T_2\cap T_3& = & L(0,(K\circ F_0 \circ F_1^{-1}\circ K)^*,I).
\end{array}
\]

Now we want to write $\Sigma_1 = \langle l,s_1 \rangle$ and $\Sigma_2 = \langle m,s_2 \rangle$ in the form $\pi(H_0,H_1,H_2)$. To do this, we solve the linear system
\[
\begin{array}{lcl}
\begin{pmatrix}
1 & 1 \\
\theta &\theta^q
\end{pmatrix}
\begin{pmatrix}
\cH_0 & \cH_1 & \cH_2
\end{pmatrix}
\begin{pmatrix}
x \\ x^q \\ y \\ y^q \\z \\ z^q
\end{pmatrix}
=0
\end{array},
\]
where, in turn, we substitute in the coordinates of four linearly independent vectors of $\Sigma_i$, $i=1,2$. Consequently, 
\[
\Sigma_1 =\pi(0,I,-(K\circ F_0\circ F_1^{-1}\circ K)^*), \ \ \ 
\Sigma_2 =\pi(-(K\circ F_2\circ F_1^{-1}\circ K)^*,I,0).
\]
Since $\Sigma_3=\langle n,s_3 \rangle=\{(F_0(x),y,F_2(x))_2:x,y\in\GF(q^2)\}$, by imposing that the generic point of $\Sigma_3$ belongs to $\Sigma_1$ as to $\Sigma_2$,   the points of $\Sigma_1\cap\Sigma_2\cap\Sigma_3$ are obtained by solving the following system of linear equations
\begin{equation*}
\left\{
\begin{array}{lcl}
 y-(F_2^*\circ K\circ F_0 \circ F_1^{-1}\circ K)^*(x) & = & 0\\[.1in]
y-(F_0^*\circ K\circ F_2 \circ F_1^{-1}\circ K)^*(x) & = & 0.
\end{array}
\right.
\end{equation*}
This yields
\[
(F_2^*\circ K\circ F_0- F_0^*\circ K\circ F_2)(x) = 0.
\]
From Eq.\,\eqref{eq_4}, we get
\begin{equation}\label{eq_4bis}
( 2F_2^*\circ K\circ F_0-F_1^*\circ K\circ F_1)(x) = 0.
\end{equation}
Note that $(F_2^*\circ K\circ F_0)(x)=(f_0g_2^q+f_2g_0^q)x+(f_0^qf_2+g_0g_2^q)x^q$ and $(F_1^*\circ K\circ F_1)(x)=2f_1g_1^qx+(f_1^{q+1}+g_1^{q+1})x^q$. Therefore, 
{\small
\begin{align*}
(2F_2^*\circ K\circ F_0-F_1^*\circ K\circ F_1)(x) & =  2(f_0g_2^q+f_2g_0^q-f_1g_1^q)x+
[2(f_0^qf_2+g_0g_2^q)-f_1^{q+1}-g_1^{q+1}]x^q\\
 & = (f_0^qf_2-f_0f_2^q+g_0g_2^q-g_0^qg_2)x^q,
\end{align*}}
by Eq.\,\eqref{eq_5}. Consequently, $\Sigma_1\cap \Sigma_2\cap\Sigma_3=\{0\}$ if and only if 0 is the unique solution of \eqref{eq_4bis} if and only if $f_0^qf_2+g_0g_2^q \not\in \GF(q)$. Similarly,   $\Sigma_1\cap \Sigma_2\cap\Sigma_3$ is a line if and only if $f_0^qf_2+g_0g_2^q \in\GF(q)$.
\end{proof}
\begin{remark}\label{rem_1}
In the proof of the previous result we used that $F_1$ is invertible. This property holds also for $F_0$ and $F_2$, because $n$ trivially intersects $m$ and $l$. 
\end{remark}

\subsection{Construction of the subsets of type $\cU_{p_1,p_2}$}\label{sec2U} 

As above, let $Q^-(5,q)$ be the elliptic quadric of $\PG(\widehat V)$ defined by $\widehat Q$, and fix a totally singular line $l$. For any given non-degenerate hyperplane $\Pi$  not containing $l$, let $B=l\cap \Pi$ and  $\sigma$ be the line $\langle l, \Pi^\perp\rangle \cap \Pi$. As $l\subset B^\perp$ and $B\in\Pi$ then $\langle l,\Pi^\perp \rangle\subset B^\perp$, hence $\sigma\subset B^\perp\cap\Pi$.
 In particular, $\sigma$ corresponds to an internal point for the non-singular conic $(B^\perp\cap\Pi\cap Q^-(5,q))/B$ of the quotient space  $(B^\perp\cap\Pi)/B$. To see this we observe that $\sigma^\perp=\langle l^\perp\cap\Pi, \Pi^\perp\rangle $ shares with the quadratic cone $B^\perp\cap\Pi\cap Q^-(5,q)$ just the point $B$. Therefore, if $\sigma$ corresponded to an external point, $\sigma^\perp$ would have two generators in common with the cone, which is a contradiction.
Then, for any given  totally singular line $p_1$ lying in $\Pi$ and passing through $B$, the plane 
$\langle p_1, \sigma \rangle$  meets ${Q}^-(5,q)$ in a further line $p_2$. Let $\cO_{i}$, $i=1,2$, be the  totally singular lines in $\Pi$ intersecting $p_i$, but not at $B$.
 We set $\cU_{p_1,p_2}=\cO_{1}\cup \cO_{2}$, and  $\cU_{p_1,p_2}$ is said to be {\em constructed on the flag} $(B,l)$.

By the reasoning above, it is now evident that the line $\sigma$ induces  an involution $\tilde\sigma$ on the generators of the quadratic cone $B^\perp\cap\Pi\cap Q^-(5,q)$. 
\begin{lemma}\label{lem_8}
Let $B=\langle (1,0,0)_2 \rangle$ and $l=L(I,0,0)$. 
\begin{enumerate}[(i)]
\item A non-degenerate hyperplane $\Pi$ through $B$ not containing  $l$ has equation
\[
\Pi: \theta (X-X^q)+\beta^{q}Y+\beta Y^q-\alpha^qZ-\alpha Z^q=0
\]
for all $\alpha,\beta\in\GF(q^2)$,  such that $\beta^{q+1}-\theta (\alpha^q-\alpha)\neq 0$;
\item the generators of the quadratic cone $B^\perp\,\cap\,\Pi\,\cap\, Q^-(5,q)$ have the form $l_y=L(F_0,F_1,F_2)$, where 
\[
F_0(x)=(2\xi-y^{q+1})x+(2\xi+y^{q+1})x^q, \ \ \ F_1(x)=2\theta y(x-x^q), \ \ \  F_2(x)=2\xi (x-x^q),
\]
for all $y\in \GF(q^2)$ such that 
\begin{equation}\label{eq_10}
 y^{q+1}-(\beta^q y+\beta y^q)+\theta(\alpha^q-\alpha)=0.
\end{equation}
\item  the involution induced by $\sigma=\langle l,\Pi^\perp \rangle\cap\Pi$ on the $l_y$'s is
\[
\begin{array}{ccc}
l_y & \overset{\tilde\sigma}\longmapsto  & l_{2 \beta-y}\, .
\end{array}
\] 
\end{enumerate}
\end{lemma}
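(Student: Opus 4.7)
For (i), the plan is to parametrize all $\GF(q)$-linear functionals on $\widehat V$: each has the form $\varphi(v) = \Tr(ax) + \Tr(by) + \Tr(cz)$ for a unique $(a,b,c)\in\GF(q^2)^3$, by a dimension count. The condition $B\in\Pi$ forces $\Tr(a)=0$, that is $a\in\theta\GF(q)$, and $l\not\subset\Pi$ forces $a\neq 0$; rescaling the functional I may take $a=\theta$, after which setting $b=\beta^q$ and $c=-\alpha^q$ yields the stated form. For the non-degeneracy, I would compute $\Pi^\perp$ directly from $\widehat{\bf b}$ to obtain $\Pi^\perp = \langle(\alpha,\beta,\theta)_2\rangle$, and then $\Pi$ is non-degenerate iff $\Pi^\perp\not\subset\Pi$, iff $\widehat Q(\Pi^\perp)\neq 0$, which by \eqref{eq_18} is precisely $\beta^{q+1}-\theta(\alpha^q-\alpha)\neq 0$.

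For (ii), direct substitution of the prescribed $F_0,F_1,F_2$ into \eqref{eq_5}, using $\theta^q=-\theta$, $\xi=\theta^2$, and $y^{q+1}\in\GF(q)$, confirms that $l_y$ is totally singular. Taking $x\in\GF(q)$ yields $(F_0(x),F_1(x),F_2(x))_2 = (4\xi x,0,0)_2$, so $B\in l_y$. Plugging a generic point of $l_y$ into the equation of $\Pi$ from (i), each term acquires a factor $(x-x^q)$, which can be extracted (together with $-2\theta$, using $\xi=\theta^2$) to leave exactly
\[
y^{q+1}-(\beta^q y+\beta y^q)+\theta(\alpha^q-\alpha)=0.
\]
Setting $w=y-\beta$ rewrites this as $w^{q+1} = \mu := \beta^{q+1}-\theta(\alpha^q-\alpha)\neq 0$, which has exactly $q+1$ solutions in $\GF(q^2)$, since each fibre of the relative norm on $\GF(q)^*$ has size $q+1$. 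Distinct $y$ give distinct $l_y$ (the coefficient of $x$ in $F_1 = 2\theta y(x-x^q)$ determines $y$), and the cone has exactly $q+1$ generators, so the $l_y$ exhaust them.

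For (iii), I would first compute $\sigma$ directly. A vector in $\langle l,\Pi^\perp\rangle$ has the form $(x+\lambda\alpha,\lambda\beta,\lambda\theta)_2$ with $x\in\GF(q^2)$, $\lambda\in\GF(q)$, and imposing the $\Pi$-equation gives $\theta(x-x^q) = -2\lambda\mu$; hence
\[
\sigma = \big\langle (1,0,0)_2,\ (\alpha-\mu\theta/\xi,\beta,\theta)_2\big\rangle_{\GF(q)}.
\]
Since $l_y$, $l_{2\beta-y}$ and $\sigma$ all pass through $B$, verifying $\tilde\sigma(l_y)=l_{2\beta-y}$ reduces to expressing the non-$B$ generator of $\sigma$ as a $\GF(q)$-combination of $(1,0,0)_2$ and of the vectors $v_y := (F_0(\theta),F_1(\theta),F_2(\theta))_2 = (-2\theta y^{q+1},4\xi y,4\xi\theta)_2$ and $v_{2\beta-y}$. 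The combination $(v_y+v_{2\beta-y})/(8\xi)$ matches immediately in the $y$- and $z$-coordinates; for the $x$-coordinate one simplifies $y^{q+1}+(2\beta-y)^{q+1}$ to $2(\beta^{q+1}+\mu)$ using the cone equation from (ii), after which the difference with $\alpha-\mu\theta/\xi$ collapses (via $\beta^{q+1}-\mu = \theta(\alpha^q-\alpha)$) to $\Tr(\alpha)/2 \in \GF(q) = B$. The main obstacle is precisely this reduction: recognising that the cone equation is exactly what converts the symmetric sum $y^{q+1}+(2\beta-y)^{q+1}$ into the form making the $x$-components match modulo $B$. Finally, $y\mapsto 2\beta-y$ is manifestly a fixed-point-free involution on the solution set of the cone equation, confirming it is the involution $\tilde\sigma$ induced by $\sigma$.
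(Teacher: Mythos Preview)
Your proof is correct. Parts (i) and (ii) follow essentially the paper's line: the paper also identifies $\Pi^\perp=\langle(\alpha,\beta,\theta)_2\rangle$ and reads off the non-degeneracy condition from $\widehat Q$, and for (ii) the paper derives the form of the $l_y$ from a transversal $4$-space $\Sigma\subset B^\perp$ while you verify the stated $l_y$ directly and close by a counting argument --- both are fine.

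Part (iii) is where your approach genuinely diverges from the paper's. The paper fixes the point $R=\langle(\alpha^q-\beta^{q+1}/\theta,\beta,\theta)_2\rangle$ of $\sigma$ and a point $R_1\in l_{y_1}$, then \emph{solves} for the unique $l_y$ ($y\neq y_1$) meeting the line $\langle R,R_1\rangle$: this leads to the system \eqref{eq_12}--\eqref{eq_13}, a case split on $\beta=0$, and eventually the substitution \eqref{eq_14} producing $y=2\beta-y_1$. You instead \emph{verify} coplanarity directly: since $l_y$, $l_{2\beta-y}$ and $\sigma$ all pass through $B$, it suffices to show that the non-$B$ generator of $\sigma$ lies in $\langle B,v_y,v_{2\beta-y}\rangle$, and your combination $(v_y+v_{2\beta-y})/(8\xi)$ matches in the second and third coordinates immediately, while the first coordinate reduces (via the cone equation \eqref{eq_10}) to a $\GF(q)$-discrepancy absorbed by $B$. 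This buys you a considerably shorter argument with no system-solving and no case distinction; what the paper's approach buys is that the answer $2\beta-y$ is \emph{discovered} rather than guessed, which is methodologically cleaner if one did not already know the target. Either way, the key algebraic identity --- that \eqref{eq_10} is exactly what collapses $y^{q+1}+(2\beta-y)^{q+1}$ --- is the heart of the matter, and you have identified it correctly.
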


\begin{proof}\leavevmode
(i) Under the polarity $\perp$ of $\PG(\widehat V)$ associated with $\widehat{\bf b}$,  a non-degenerate hyperplane $\Pi$ not containing  $l$ corresponds to a non-singular point $P\in B^\perp\setminus l^\perp$. Such a point has the form $P=\langle (\alpha,\beta,\theta )_2 \rangle$, with $\alpha,\beta\in\GF(q^2)$,  such that $\beta^{q+1}-\theta (\alpha^q-\alpha)\neq 0$.

(ii) In order to find the totally singular lines through $B$, we consider the restriction of $\widehat Q$ on  the 4-dimensional subspace $\Sigma=\{(\theta a,y,\theta b)_2:a,b\in \GF(q),y\in\GF(q^2)\}$ of $B^\perp$ not on $B$. We get that these totally singular lines, apart from $l$, have the form $l_y=L(F_0,F_1,F_2)$, where 
\[
F_0(x)=(2\xi-y^{q+1})x+(2\xi+y^{q+1})x^q, \ \ \ F_1(x)=2\theta y(x-x^q), \ \ \  F_2(x)=2\xi (x-x^q),
\]
for all $y\in \GF(q^2)$. In particular, $l_y$ is in  $\Pi$ if and only if  Eq.\,\eqref{eq_10} holds.

(iii) By definition, $\sigma=\langle l, \Pi^\perp \rangle\cap \Pi$ is $L(F_0,F_1,F_2)$ where 
\[
F_0(x)=\left(\frac{2\beta^{q+1}}{\theta }+\alpha-2\alpha^q\right)x-\alpha x^q, \ \ \ F_1(x)=-\beta(x+x^q), \ \ \  F_2(x)=\theta  (x+x^q),
\]

Fix the points $R=\langle (\alpha^q-\frac{\beta^{q+1}}{\theta},\beta,\theta)_2 \rangle$  of $\sigma$ and $R_1=\langle (-\theta y_1^{q+1},2\xi y_1,2\xi \theta)_2 \rangle$  of $p_1=l_{y_1}$ with $y_1$ satisfying Eq.\,\eqref{eq_10}. Since $\sigma$ corresponds to an internal point for the non-singular conic $(B^\perp\cap\Pi\cap Q^-(5,q))/B$ of the quotient space  $(B^\perp\cap\Pi)/B$, the line $p_2$ is the unique totally singular line $l_{y}$, $y\neq y_1$, intersecting the line $\langle R,R_1 \rangle$. Thus, we are required to determine the triples $(y,x,\lambda)\in \GF(q^2)\times \GF(q^2)\times\GF(q)^*$, satisfying  the system 
\begin{equation}\label{eq_12}
\left\{
\begin{array}{rcl}
(2\xi -y^{q+1})x+(2\xi+y^{q+1})x^q & = & (\alpha^q-\frac{\beta^{q+1}}{\theta})-\theta y_1^{q+1} \lambda \\[.05in]
2\theta y(x-x^q) & = & \beta+2 \xi y_1\lambda \\[.05in]
2\xi(x-x^q) & = & \theta+2\xi \theta \lambda
\end{array}
\right.,
\end{equation}
together with the condition Eq.\,\eqref{eq_10}. 
By plugging  $x=x_0+\theta x_1$, $\alpha=a_0+\theta a_1$, $x_i,a_i\in\GF(q)$, into  \eqref{eq_12} (note that $x_1\neq0$ otherwise the intersection point would coincide with $B$), we rewrite \eqref{eq_12} in the equivalent form
\begin{equation}\label{eq_13}
\left\{
\begin{array}{rcl}
4\xi  x_0 & = & a_0  \\[.05in]
2\xi   x_1 y^{q+1} & = & a_1\xi +\beta^{q+1}+\xi y_1^{q+1}\lambda \\[.05in]
4\xi x_1y & = & \beta+2 \xi y_1\lambda \\[.05in]
4\xi x_1 & = &  1+2\xi \lambda
\end{array}
\right..
\end{equation}
Hence,
\begin{equation*}
x=\frac{a_0}{4\xi}+\theta\frac{\beta-y_1}{4\xi(y-y_1)},\ \ \ \ \ \ \ \ \lambda=\frac{\beta-y}{2\xi(y-y_1)}.
\end{equation*}

By using Eq.\,\eqref{eq_10} in the second equation of \eqref{eq_13}, we come to
\[
\beta(\beta^q(y-y_1)-\beta(y^q-y_1^q) +(y^qy_1-yy_1^q))=0.
\]
Assume $\beta=0$. From  \eqref{eq_10}, it follows that $y=y_1c$, for some $c\neq 1$ with $N(c)=1$. As $x_1=-1/(4\xi(c-1))\in\GF(q)$, $c\in\GF(q)$ with $c^{q+1}=c^2=1$, that is, $c=-1$.
Assume $\beta^q(y-y_1)-\beta(y^q-y_1^q) +(y^qy_1-yy_1^q)=0$, i.e., $(\beta^q-y_1^q)(y-y_1))-(\beta-y_1)(y^q-y_1^q)=0$, where $\beta^q-y_1^q\neq 0$ (as $\Pi^\perp$ is non-singular). Then,
\begin{equation}\label{eq_14}
y=\frac{a+(\beta^q-y_1^q)y_1}{\beta^q-y_1^q}
\end{equation}
for some $a\in\GF(q)^*$. Substituting \eqref{eq_14} into \eqref{eq_10} yields  $a=2(\beta-y_1)^{q+1}$, whence $y=2\beta-y_1$. This concludes the proof.\qedhere
\end{proof}
\begin{remark}\label{rem_2}
The line  $(B^\perp\cap \Pi)^\perp$ contains precisely $q$ non-singular points, one of which is $\Pi^\perp$. Under the polarity defined by $Q^-(5,q)$, the corresponding hyperplanes share $B^\perp\cap \Pi$. For any such a hyperplane $H$, the line $\langle l,H^\perp \rangle\cap H$ coincides with the line $\sigma$ constructed from $\Pi$ \footnote{To see this, just note that these hyperplanes are the ``perp" of the non-singular points on the line $\langle B,\Pi^\perp \rangle$, and have the form $\langle (\lambda+\alpha, \beta,\theta)_2 \rangle$, for all $\lambda\in\GF(q)$. Straightforward calculations show that the corresponding line $\sigma_\lambda=\langle l,H^\perp \rangle\cap H$ coincides with $\sigma$.}. Therefore, these hyperplanes define the same involution on the generators of the quadratic cone $B^\perp\cap\Pi\cap Q^-(5,q)$.
\end{remark}
\begin{proposition}\label{lem_12}
Let $l_1,l_2,l_3$ be three distinct lines of $Q^-(5,q)$ spanning the whole space. Then, $l_1,l_2,l_3$ are in perspective if and only if, for some flag $(B,l_i)$,  the totally singular lines through $B$  concurrent with $l_j$ and $l_k$, $i\neq j\neq k\neq i$, correspond under the map $\tilde\sigma$ defined by the hyperplane containing $B$, $l_j$ and $l_k$. 
\end{proposition}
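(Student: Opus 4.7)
The strategy is to combine the algebraic characterisation of perspectivity from Proposition~\ref{prop_2} with the explicit description of $\tilde\sigma$ in Lemma~\ref{lem_8}. The isometry group of $Q^-(5,q)$ acts transitively on ordered triples of mutually skew totally singular lines equipped with a marked point on the first line, and both ``being in perspective'' and the geometric condition on the flag are isometry-invariant, so it suffices to establish the equivalence in a single normal form. I would choose the frame of Proposition~\ref{prop_2}, namely $l_1=L(I,0,0)$, $l_2=L(0,0,I)$, $l_3=L(F_0,F_1,F_2)$ with $F_i(x)=f_ix+g_ix^q$, together with the flag $(B,l_1)$ where $B=\langle(1,0,0)_2\rangle$, setting $i=1$, $j=2$, $k=3$. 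The ``for some'' clause is then absorbed by the existence of this single standardised flag.

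In this frame I would identify the three ingredients of the statement. The pole of the hyperplane $\Pi=\langle B,l_2,l_3\rangle$ turns out to be of the form $\Pi^\perp=\langle(0,\beta_0,\theta)_2\rangle$, where $\beta_0\in\GF(q^2)$ is pinned down by the single linear relation
\[
\beta_0 g_1^q+\beta_0^q f_1+\theta\,(f_0-g_0^q)=0
\]
coming from $\Pi^\perp\perp l_3$; this puts $\Pi$ in the form of Lemma~\ref{lem_8}(i) with $\alpha=0$. The totally singular line $p_2$ through $B$ meeting $l_2$ is immediately seen to be $l_0$ in the notation of Lemma~\ref{lem_8}(ii). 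For the line $p_3$ through $B$ meeting $l_3$, I would locate the unique point $P\in l_3\cap B^\perp$, i.e.\ the unique $t_0\in\GF(q^2)$ (up to $\GF(q)^*$-scalar) with $\Tr(F_2(t_0))=0$, and match the generic point of $\langle B,P\rangle$ with that of $l_y$ from Lemma~\ref{lem_8}(ii) to identify $p_3=l_{y_3}$ where
\[
y_3=\theta\,\frac{F_1(t_0)}{F_2(t_0)}.
\]
Eliminating $t_0^q$ via the defining relation of $t_0$ then yields $y_3$ as an explicit rational function of $f_1,g_1,f_2,g_2$.

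By Lemma~\ref{lem_8}(iii), $p_2$ and $p_3$ correspond under $\tilde\sigma$ if and only if $y_3=2\beta_0-0=2\beta_0$. The main obstacle is to verify algebraically that this single equation is equivalent to $f_0^q f_2+g_0 g_2^q\in\GF(q)$. My plan is: solve for $\beta_0$ by applying Cramer's rule to the $2\times 2$ system formed by the above relation and its $q$-th power, substitute into $y_3=2\beta_0$, clear denominators (legitimate by Remark~\ref{rem_1}, which ensures $f_2^{q+1}-g_2^{q+1}\ne 0$), and simplify using the total-singularity relations~\eqref{eq_5} of $l_3$. After reduction, the identity $y_3=2\beta_0$ collapses precisely to the statement that $f_0^q f_2+g_0 g_2^q$ equals its $q$-conjugate, which by Proposition~\ref{prop_2} is the perspective condition for $l_1,l_2,l_3$, completing the equivalence.
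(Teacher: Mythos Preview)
Your approach is essentially the paper's: reduce to the frame $l_1=L(I,0,0)$, $l_2=L(0,0,I)$, $l_3=L(F_0,F_1,F_2)$ with $B=\langle(1,0,0)_2\rangle$, identify $\Pi$ via Lemma~\ref{lem_8}(i) with $\alpha=0$, recognise the line through $B$ meeting $l_2$ as $l_0$, and test the $\tilde\sigma$-correspondence against Proposition~\ref{prop_2}.

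Two remarks. First, your transitivity claim is an overstatement: the isometry group is \emph{not} transitive on ordered triples of skew totally singular lines with a marked point (indeed, if it were, either every triple would be in perspective or none would, and the proposition would be vacuous). What you actually need, and what the paper invokes via \cite[Theorem~10.12]{taylor}, is only transitivity on the data $(B,l_1,l_2)$; the third line $l_3$ then stays generic. Your computation still goes through, but the justification for the reduction should be phrased this way.

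Second, the paper streamlines the algebra in two places you do not. It uses Remark~\ref{rem_1} to normalise $F_2=I$ from the outset, so the target condition becomes simply $f_0\in\GF(q)$; and rather than parametrise $p_3=l_{y_3}$ and compare $y_3$ with $2\beta$, it tests directly whether $l_3$ meets $l_{2\beta}$ (equivalent, since $p_3$ is the unique generator through $B$ meeting $l_3$). With $F_2=I$ the concurrency system collapses to $2\beta=\theta(f_1-g_1)$, which combined with your relation for $\beta$ and Eqs.~\eqref{eq_5} gives $f_0\in\GF(q)$ in two lines. Your route via Cramer's rule with general $F_2$ is correct but noticeably heavier.
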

\begin{proof}
Fix $B\in l_i$. By \cite[Theorem 10.12]{taylor}, up to the Klein correspondence, we may choose coordinates such that $l_i=l=L(I,0,0)$, $l_j=m=L(0,0,I)$ and $l_k=n=L(F_0,F_1,I)$, with $B=\langle (1,0,0)_2 \rangle$. 
Let $\Pi$ be the (non-degenerate) hyperplane spanned by $B$, $m$ and $n$. Let $p_1$ and $p_2$ be the two totally singular lines on $B$ concurrent with $m$ and $n$, respectively.  Since $m^\perp=\pi(I,0,0)$, by Lemma \ref{lem_8}(i) $\Pi$ has an equation of the form
\[
\Pi: \theta (X-X^q)+\beta^{q}Y+\beta Y^q=0,
\]
for some $\beta\in\GF(q^2)^*$. 

Since the line $p_1$ has the form given by Lemma \ref{lem_8}(ii), $p_1=l_0$.  Lemma  \ref{lem_8}(ii) and (iii) imply $p_1^{\tilde\sigma}=l_{2\beta}=L(G_0,G_1,G_2)$, with 
\[
G_0(x)=(2\xi-4\beta^{q+1})x+(2\xi+4\beta^{q+1})x^q, \quad G_1(x)=4\theta\beta(x-x^q), \quad  G_2(x)=2\xi (x-x^q).
\]

By Remark \ref{rem_1}, we may assume $F_2=I$, that is, $n=L(F_0,F_1,I)$, with $F_0(x)=f_0x+g_0x^q$, $F_1(x)=f_1x+g_1x^q$. The condition that $n$ belongs to $\Pi$ is equivalent to have
\begin{equation}\label{eq_16}
\beta g_1^q+\beta^q f_1+\theta(f_0-g_0^q)=0.
\end{equation}

Therefore, $n$ is concurrent with $l_{2\beta}$ if and  only if there exist $x,\bar x\in\GF(q^2)^*$ such that
\begin{equation}\label{eq_15}
\left\{
\begin{array}{rcl}
f_0 x+g_0x^q & = & (2\xi-4\beta^{q+1})\bar x+(2\xi+4\beta^{q+1})\bar x^q  \\[.05in]
f_1 x+g_1x^q & = & 4\theta\beta(\bar x-\bar x^q) \\[.05in]
x & = &  2\xi (\bar x-\bar x^q)
\end{array}
\right..
\end{equation}
Write $\bar x=\bar x_0+\theta \bar x_1$, $\bar x_i\in\GF(q)$. Then, $x=4\xi\theta\bar x_1\neq 0$.

From the second equation of \eqref{eq_15}, we get $2\beta=\theta(f_1-g_1)$. This, together with Eq.\,\eqref{eq_16}, yields 
\begin{equation}\label{eq_17}
2f_1g_1^q+2(f_0-g_0^q)-(f_1^{q+1}+g_1^{q+1})=0.
\end{equation}
The equations \eqref{eq_5} with $f_2=1$ and $g_2=0$, applied to \eqref{eq_17}, give $f_0\in\GF(q)$, and Proposition \ref{prop_2} leads to the result.
\end{proof}
\begin{remark}\label{rem_3}
Note that if Proposition \ref{lem_12} holds for one point $B\in l_i$, then it holds for all points of $l_i$.
\end{remark}

\section{A five-class association scheme on $H(3,q^2)$}\label{sec_3}
Let $V=V(4,q^2)$  equipped with a non-degenerate Hermitian form $h:V\times V\rightarrow \GF(q^2)$. As usual,  $H(3,q^2)$ denotes the  unitary polar geometry of rank 2 defined by $h$, and it is called a {\em Hermitian surface} of $\PG(3,q^2)$. A {\em point} (resp. {\em line}) of $H(3,q^2)$ is a 1-dimensional (resp. 2-dimensional) subspace in $H(3,q^2)$, that is, totally isotropic with respect to $h$. A pair of vectors $(\x,\y)$ such that $\x$ and $\y$ are isotropic with $h(\x,\y)=1$ is called a {\em hyperbolic pair}; in this case, $\langle \x,\y \rangle$ in $\PG(3,q^2)$ is said to be a {\em hyperbolic line}. Any hyperbolic line intersects $H(3,q^2)$ in $q+1$ points.  Two distinct points $P=\langle \p \rangle$ and $Q=\langle \q \rangle$ of $H(3,q^2)$ are said to be {\em orthogonal} or {\em collinear} if $h(\p,\q)=0$; in other words, they span a totally isotropic line.

Since all non-degenerate Hermitian forms on $V$ are isometric, we may take an ordered basis $(\v_0,\v_1,\v_2,\v_3)$ for $V$ such that
\begin{equation}\label{eq_1}
h(\x,\y)=x_0y_3^q-x_1y_1^q-x_2y_2^q+x_3y_0^q,
\end{equation}
where  $\x=x_0\v_0+x_1\v_1+x_2\v_2+x_3\v_3$ and $\y=y_0\v_0+y_1\v_1+y_2\v_2+y_3\v_3$.

In \cite{shult}, Shult introduced the following function on $H(3,q^2)$.  For any  three distinct points $P=\langle \p \rangle$, $Q=\langle \q \rangle$ and $R=\langle \r \rangle$ of $H(3,q^2)$, let
\[
z(P,Q,R)=h(\p,\q)h(\q,\r)h(\r,\p)\GF(q)^*,
\]
where $\GF(q)^*$ denotes the multiplicative group of non-zero elements of $\GF(q)$. 
Then, $z(P,Q,R)$ is well-defined and 
\begin{align*}
&z(P,Q,R)=z(R,Q,P)=z(Q,P,R);\\
&z(P,Q,R)=z(Q,P,R)^q.
\end{align*}

In the multiplicative group $T=\GF(q^2)^*/\GF(q)^*\simeq Z_{(q+1)}$, with identity  $e=\GF(q)^*$, the element $t=\theta\GF(q)^*$ is the unique involution.
\begin{lemma}[\cite{shult}]\label{lem_7}
Let $P,Q,R$ be three pairwise non-collinear points of $H(3,q^2)$. Then, the span of $P,Q,R$ is a degenerate plane if and only if  $z(P,Q,R)= t$.
\end{lemma}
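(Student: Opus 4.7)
The plan is to reduce degeneracy of $\pi = \langle P,Q,R\rangle$ to the vanishing of the determinant of the Gram matrix of $h|_\pi$, compute that determinant explicitly, and then identify the resulting condition with $z(P,Q,R) = t$ inside $T$.

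First I would fix isotropic representatives $\p, \q, \r$ of $P,Q,R$ and set $a = h(\p,\q)$, $b = h(\q,\r)$, $c = h(\r,\p)$; pairwise non-collinearity gives $a,b,c \in \GF(q^2)^*$. Using the isotropy $h(\p,\p)=h(\q,\q)=h(\r,\r)=0$ together with the Hermitian symmetry $h(\y,\x) = h(\x,\y)^q$, the Gram matrix of $h|_\pi$ in the basis $(\p,\q,\r)$ is
$$G = \begin{pmatrix} 0 & a & c^q \\ a^q & 0 & b \\ c & b^q & 0 \end{pmatrix},$$
so $\pi$ is degenerate iff $\det G = 0$. Expanding along the first row (two of the three $2\times 2$ cofactors contain a diagonal zero) the computation collapses to
$$\det G = abc + (abc)^q = \Tr(abc),$$
in agreement with the general fact that any Hermitian matrix has determinant in $\GF(q)$. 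Hence $\pi$ is degenerate iff $\Tr(abc) = 0$.

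Finally, I would translate $\Tr(abc) = 0$ into $T$. The kernel of $\Tr$ is the $\GF(q)$-line in $\GF(q^2)$ on which Frobenius acts as $-1$; the choice $\theta^q = -\theta$ (made at the start of Section \ref{sec_2}) identifies that line as $\theta\GF(q)$. Since $abc \neq 0$, the condition $\Tr(abc)=0$ is therefore equivalent to $abc \in \theta\GF(q)^*$, i.e.\ $abc \cdot \GF(q)^* = \theta\GF(q)^* = t$. But by definition $z(P,Q,R) = abc \cdot \GF(q)^*$, so this is precisely $z(P,Q,R) = t$.

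The statement has no real obstacle: the $3\times 3$ determinant is small, and the identification of $\ker(\Tr)$ with the coset representing $t$ is immediate from $\theta^q = -\theta$. The conceptual content is simply that Hermiticity of $G$ forces $\det G \in \GF(q)$, which is exactly why the answer comes out as the symmetric combination $\Tr(abc)$ corresponding to the unique involution of $T$.
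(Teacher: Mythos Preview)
The paper does not prove this lemma; it is quoted from Shult \cite{shult}. So there is no in-paper argument to compare against, and the question is simply whether your proof stands on its own.

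Your computation is correct once $(\p,\q,\r)$ is a basis of $\pi$: the Gram determinant is indeed $abc+(abc)^q=\Tr(abc)$, and the identification $\ker(\Tr)\setminus\{0\}=\theta\GF(q)^*$ is exactly the coset $t$. That is the heart of the matter and you have it.

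There is one point you glide over. You call $(\p,\q,\r)$ a basis of $\pi$, but ``pairwise non-collinear'' in this paper means pairwise non-orthogonal, not projectively independent; nothing in the hypothesis rules out $P,Q,R$ lying on a common hyperbolic line. In that case your $3\times 3$ Gram matrix is singular for the trivial reason that its rows are linearly dependent, so $\Tr(abc)=0$ and $z(P,Q,R)=t$, yet the span is a non-degenerate line rather than a degenerate plane. (Concretely, with $P=\langle(1,0,0,0)\rangle$, $Q=\langle(0,0,0,1)\rangle$, $R=\langle(1,0,0,r_3)\rangle$ and $r_3+r_3^q=0$, one gets $abc=r_3\in\theta\GF(q)^*$.) This is precisely why the paper lists $R_2$ separately from $R_3$ in the scheme. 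So you should state explicitly that $\p,\q,\r$ are linearly independent, or equivalently that the span is genuinely a plane; under that standing assumption your argument is complete.
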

Let $\Gamma=T\setminus\{e,t\}$. Fix a point $P$ of $H(3,q^2)$ and consider the set $\cX$ of all the points of $H(3,q^2)$ that are not collinear with $P$.
On the set $\cX$, which consists of $q^5$  points,   we define the following relations:
\begin{enumerate}
\item[] \mbox{$R_1=\{(Q,R): z(P,Q,R)=0 \}$},
\item[] \mbox{$R_2=\{(Q,R): \langle P,Q,R \rangle\ \mathrm{is\ a\ (hyperbolic)\ line}\}$},
\item[] \mbox{$R_3=\{(Q,R): z(P,Q,R)=t\}$},
\item[] \mbox{$R_4=\{(Q,R):  z(P,Q,R)\in\Gamma\}$},
\item[] \mbox{$R_5=\{(Q,R):  z(P,Q,R)= e\}$}.
\end{enumerate}

Note that $(Q,R)\in R_1$ if and only if $Q$ is collinear with $R$ and $(Q,R)\in R_3$ if and only if $P,Q,R$ span a degenerate plane (see  Lemma \ref{lem_7}).

Set $\cR=\{R_0,R_1,\ldots,R_5\}$, where $R_0$ is the diagonal relation. We are going to prove that $\mathfrak X_P=(\cX,\cR)$ is a symmetric, hence commutative,  imprimitive association scheme.  
Clearly all the above relations are symmetric. 
We now show that all of the intersection numbers $p_{ij}^k$ are well defined.  Note that if $p_{ij}^k$ is well defined then so too is $p_{ji}^k=p_{ij}^k$. 
We will be aided by the fact that the projective unitary group $\PGU(4,q^2)$ is transitive on the set of pairs of non-collinear points of $H(3,q^2)$ \cite[Corollary 11.12]{grove}. Thus, in the computations of the parameters  we may  assume  $P=\langle (0,0,0,1) \rangle$, $Q=\langle (1,0,0,0) \rangle$, and $R=\langle (1,r_1,r_2,r_3) \rangle$, with $r_1^{q+1}+r_2^{q+1}=r_3+r_3^q$, since $R\in \cX$. Note that $z(P,Q,R)=h(\q,\r)\GF(q)^*=r_3^q\GF(q)^*$.

\begin{lemma}\label{lemma_1}
The valencies $\eta_k=p_{kk}^0$ are as follows:
$\eta_1=(q^2-1)(q+1)$, $\eta_2=q-1$, $\eta_3=(q^2-1)^2$, $\eta_4=(q^3-q)(q-1)^2$, $\eta_5=(q^3-q)(q-1)$.
\end{lemma}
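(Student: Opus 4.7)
The plan is to exploit the $\PGU(4,q^2)$-transitivity on ordered pairs of non-collinear points of $H(3,q^2)$ (as explicitly noted in the paragraph preceding the lemma) to reduce the valency count to a concrete coordinate calculation. Since every pair $(P,Q)$ with $P$ fixed and $Q\in\cX$ lies in a single $\PGU(4,q^2)$-orbit, I may fix $P=\langle(0,0,0,1)\rangle$ and $Q=\langle(1,0,0,0)\rangle$ once and for all, and then count, for each $k\in\{1,\dots,5\}$, how many $R\in\cX$ satisfy $(Q,R)\in R_k$.

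Next I would parametrise $\cX$. Since $R$ is not collinear with $P$, one has $h(\p,\r)\neq 0$, which forces $r_0\neq 0$, so I may write $R=\langle(1,r_1,r_2,r_3)\rangle$; the isotropy $h(\r,\r)=0$ then reads
\[
r_3+r_3^q=r_1^{q+1}+r_2^{q+1}.
\]
A direct substitution into \eqref{eq_1} gives $h(\p,\q)=1$, $h(\r,\p)=1$, and $h(\q,\r)=r_3^q$, so
\[
z(P,Q,R)=r_3^q\,\GF(q)^*.
\]
Thus the class of $(Q,R)$ is completely determined by where $r_3$ sits in $\GF(q^2)$ relative to the subgroups $\GF(q)^*$ and $\theta\GF(q)^*$, together with the projective position of $R$ on the line $\langle P,Q\rangle$.

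With this dictionary in hand, each valency reduces to two independent counts: first the number of admissible $r_3\in\GF(q^2)$ of the required type, and then, for each such $r_3$, the number of $(r_1,r_2)\in\GF(q^2)^2$ with $r_1^{q+1}+r_2^{q+1}=\Tr(r_3)$. The second count I would handle by invoking the surjectivity of the norm $\N\colon\GF(q^2)^*\to\GF(q)^*$ with fibres of size $q+1$: for $\Tr(r_3)\neq 0$ one gets $q(q^2-1)$ pairs, and for $\Tr(r_3)=0$ one gets $1+(q^2-1)(q+1)=q^3+q^2-q$ pairs (the ``$1$'' coming from $r_1=r_2=0$). Combining:
\begin{itemize}
\item $R_1$: $r_3=0$, then subtract the pair $R=Q$, giving $(q^2-1)(q+1)$;
\item $R_2$: $R$ on the hyperbolic line $\langle P,Q\rangle$ minus $\{P,Q\}$, giving $q-1$;
\item $R_3$: $r_3\in\theta\GF(q)^*$ (so $\Tr(r_3)=0$), multiplied by $q^3+q^2-q$, then subtract the $q-1$ points of $\langle P,Q\rangle$ which also satisfy $z=t$, yielding $(q-1)((q-1)(q+1)^2)=(q^2-1)^2$;
\item $R_4$: $r_3\in\GF(q^2)\setminus(\GF(q)\cup\theta\GF(q))$ giving $(q-1)^2$ values, each contributing $q(q^2-1)$, for a total of $(q^3-q)(q-1)^2$;
\item $R_5$: $r_3\in\GF(q)^*$ giving $q-1$ values, each contributing $q(q^2-1)$, for a total of $(q^3-q)(q-1)$.
\end{itemize}

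The only delicate point — and in my view the step most likely to trip the reader — is the overlap between $R_2$ and $R_3$. If $R$ lies on the projective line $\langle P,Q\rangle$ with $R\neq P,Q$, then writing $R=a\p+b\q$ and using $\Tr(a^q b)=0$ one finds $a^qb\in\theta\GF(q)^*$, hence $z(P,Q,R)=t$. So literal reading of the defining conditions puts such $R$ in both $R_2$ and $R_3$; the scheme only makes sense if $R_3$ is understood to exclude those $(Q,R)$ already captured by $R_2$. I would make this interpretation explicit at the start of the proof, and the subtraction of $q-1$ in the $R_3$ count then exactly produces the claimed $(q^2-1)^2$. Finally, as a sanity check I would verify $\sum_{k=1}^{5}\eta_k=q^5-1=|\cX|-1$, which the five values above indeed satisfy.
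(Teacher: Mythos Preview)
Your proof is correct and follows essentially the same approach as the paper: the same coordinates for $P,Q,R$, the same identification $z(P,Q,R)=r_3^q\GF(q)^*$, and the same reduction to counting pairs $(r_1,r_2)$ on a norm-curve. The only cosmetic differences are that the paper obtains $\eta_4$ by subtraction rather than directly, and that your explicit remark about the $R_2/R_3$ overlap is exactly what the paper handles implicitly by excluding $r_1=r_2=0$ in the $\eta_3$ count.
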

\begin{proof}
We calculate $\eta_1, \eta_2, \eta_3, \eta_5$ directly, obtaining $\eta_4$ by subtraction. First,
\[
\begin{array}{ccl}
\eta_1 & = & |\{R\in\cX:(Q,R)\in R_1\}|\\
       & = & |\{R\in\cX:r_3=0\}|\\
       & = & |\{(1,r_1,r_2,0): r_1^{q+1}+r_2^{q+1}=0\}|.
 \end{array}
 \]
 
Note that $r_1, r_2 \neq 0$, otherwise $R=Q$. Fix $r_1\in\GF(q^2)^*$. There exist $q+1$ elements $r_2\in\GF(q^2)^*$ satisfying $r_2^{q+1}=-r_1^{q+1}$. Therefore, $\eta_1= (q+1)(q^2-1)$.
Next,
\[
\begin{array}{ccl}
\eta_2 & = & |\{R\in\cX:(Q,R)\in R_2\}|\\
       & = & |\{R\in\cX:R \in \langle P,Q \rangle\}|\\
       & = & |\{R\in\cX:r_1=r_2=0\}|\\
       & = & |\{(1,0,0,r_3): r_3+r_3^{q}=0\}|,
 \end{array}
 \]
 where $r_3 \neq 0$, otherwise $R=Q$. Since there exist $q$ elements $r_3\in\GF(q^2)^*$ satisfying $\Tr(r_3)=0$, we have $\eta_2= q-1$.

\[
\begin{array}{ccl}
\eta_3 & = & |\{R\in\cX:(Q,R)\in R_3\}|\\
       & = & |\{R\in\cX:r_3\in\ \theta \GF(q)^*\}|\\
       & = & |\{(1,r_1,r_2,\theta a):a\in\GF(q)^*, r_1^{q+1}+r_2^{q+1}=0\}|.
 \end{array}
 \]

Note that $r_1, r_2 \neq 0$, otherwise $(Q,R)\in R_2$. For fixed $r_1\in\GF(q^2)^*$, there are $q+1$ elements $r_2\in \GF(q^2)^*$ such that $r_2^{q+1}=-r_1^{q+1}$. Therefore, as $r_3=\theta a, a \in\GF(q)^*$, $\eta_3=(q+1)(q^2-1)(q-1)$. Finally,
\[
\begin{array}{ccl}
\eta_5 & = & |\{R\in\cX:(Q,R)\in R_5\}|\\
       & = & |\{R\in\cX:r_3\in\GF(q)^*\}|\\
       & = & |\{(1,r_1,r_2,r_3):r_3\in\GF(q)^*, r_1^{q+1}+r_2^{q+1}=2r_3\}|.
 \end{array}
 \]

Fix $r_3\in\GF(q)^*$. Then, for any $r_1\in \GF(q^2)$ such that $r_1^{q+1}\neq 2r_3$, we find $q+1$ non-zero elements $r_2\in\GF(q^2)$ which satisfy $r_2^{q+1}=2r_3-r_1^{q+1}$; for any $r_1\in \GF(q^2)$ with $r_1^{q+1}= 2r_3$,  $r_2=0$ necessarily. Therefore, $\eta_5= ((q+1)(q^2-q-1)+q+1)(q-1)$.
 
Finally, $\eta_4=|\cX|-(1+\eta_1+ \eta_2+\eta_3+\eta_5)=q(q^2-1)(q-1)^2$.
\end{proof}

\begin{lemma}\label{lem_2}
The intersection numbers $p_{1j}^k$ are well defined. They are collected in the following intersection matrix $L_1$ whose $(k,j)-$entry is $p_{1j}^k$:
\[
L_1=\begin{pmatrix}
0 & (q^2-1)(q+1) & 0 & 0            & 0           & 0 \\
1 & q^2-2        & 0 & q(q-1)       & q(q-1)^2    & q(q-1)\\
0 &   0          & 0 & (q-1)(q+1)^2 &  0          & 0  \\
0 &   q          & 1 & 2(q^2-q-1)   & q(q-1)^2    & q(q-1)\\
0 & q+1          & 0 & q^2-1        & q^3-q^2-2q   & q^2-1 \\
0 & q+1          & 0 & q^2-1        & (q+1)(q-1)^2 & (q-2)(q+1)
 \end{pmatrix}
\]
\end{lemma}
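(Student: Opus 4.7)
The plan is to compute each $p_{1j}^k$ by fixing a canonical representative $(Q,R)$ of each class $R_k$ and counting, in explicit coordinates, the points $S\in\cX$ satisfying both $(Q,S)\in R_1$ and $(S,R)\in R_j$. First, I would verify that $p_{1j}^k$ is well-defined. Arguing as in Lemma~\ref{lemma_1}, the stabiliser $G=\PGU(4,q^2)_P$ is transitive on $\cX$; moreover, a Witt-type extension argument shows that for fixed $Q$, the stabiliser $G_Q$ preserves $z(P,Q,\cdot)$ and acts transitively on each of the subsets $R_1,R_2,R_3,R_5$ (characterised respectively by $z=0$, $R\in\langle P,Q\rangle$, $z=t$ with $R\notin\langle P,Q\rangle$, and $z=e$), while the orbits inside $R_4$ are parameterised by $z_0\in\Gamma$. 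Independence of the count from the choice of $z_0$ inside $R_4$ will follow from the explicit computation below.

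Next, I fix $P=\langle(0,0,0,1)\rangle$ and $Q=\langle(1,0,0,0)\rangle$, so that $h(\p,\q)=1$. A point $S\in\cX$ collinear with $Q$ satisfies $h(\q,\s)=s_3^q=0$ and $h(\p,\s)=s_0^q\neq0$, so after normalising we write $S=\langle(1,s_1,s_2,0)\rangle$ with $s_1^{q+1}+s_2^{q+1}=0$. For each $k\in\{1,\ldots,5\}$ I would select a canonical $R$:
\[
\begin{array}{ll}
k=1:\ R=\langle(1,1,\epsilon,0)\rangle,\ \epsilon^{q+1}=-1; & k=2:\ R=\langle(1,0,0,\theta)\rangle;\\
k=3:\ R=\langle(1,1,\epsilon,\theta)\rangle; & k=4:\ R=\langle(1,0,\rho,1+\theta)\rangle,\ \rho^{q+1}=2;\\
k=5:\ R=\langle(1,0,\rho,1)\rangle,\ \rho^{q+1}=2. &
\end{array}
\]
In each case $h(\s,\r)=r_3^q-s_1r_1^q-s_2r_2^q$, and $z(P,S,R)=h(\s,\r)r_0\GF(q)^*=h(\s,\r)\GF(q)^*$.

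The counting proceeds column by column. The condition $(S,R)\in R_j$ translates into: $R_0$: $S=R$; $R_1$: $h(\s,\r)=0$; $R_2$: $S\in\langle P,R\rangle$ (immediate to see this is empty for $k\geq 1$ once $r_3\neq 0$ and $(r_1,r_2)\neq(0,0)$); $R_3$: $h(\s,\r)\in\theta\GF(q)^*$; $R_5$: $h(\s,\r)\in\GF(q)^*$. Each of these (except $R_2$) cuts out an affine $\GF(q)$-linear condition on $s_2$ (after solving for $s_1$ or vice versa), whose solution set in the isotropy cone $s_1^{q+1}+s_2^{q+1}=0$ I would enumerate by a direct substitution, carefully subtracting the points $S=Q$ (i.e.\ $(s_1,s_2)=(0,0)$) and, when applicable, $S=R$. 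The entry $p_{14}^k$ is most efficiently obtained from the row-sum identity $\sum_j p_{1j}^k=\eta_1=(q^2-1)(q+1)$ established in Lemma~\ref{lemma_1}, and serves as an internal consistency check.

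The main technical obstacle is row $k=4$: here the orbits of $G_Q$ on $R_4$ correspond to the $q-1$ elements of $\Gamma$, so one must check that the counts do not depend on which $z_0\in\Gamma$ was chosen. Working with $R=\langle(1,0,r_2,r_3)\rangle$ where $r_2^{q+1}=\Tr(r_3)\neq 0$ and $r_3\notin\GF(q)\cup\theta\GF(q)$, the relations $R_1, R_3, R_5$ each determine an affine $\GF(q)$-line in $s_2$ via the linear maps $s_2\mapsto r_3^q-s_2r_2^q$ and its real/imaginary parts; the resulting cardinalities depend only on whether $\Tr(r_3)\neq 0$ and $r_3\notin\GF(q)\cup\theta\GF(q)$, and not on the specific value of $r_3$. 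A similar phenomenon in rows $k=3$ and $k=5$ (where a single multiplicative parameter $r_3\in\theta\GF(q)^*$ or $r_3\in\GF(q)^*$ is available) must be checked in the same way. Once these independence verifications are in place, assembling the six rows yields exactly the matrix $L_1$ stated in the lemma.
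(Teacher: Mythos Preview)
Your approach mirrors the paper's: fix $P,Q$ in canonical coordinates, choose a representative of each class $R_k$, and count points $S$ collinear with $Q$ that are $j$-related to that representative. The one stylistic difference is that you invoke Witt extension to justify picking a \emph{specific} representative in each class, whereas the paper keeps the representative generic (e.g.\ $X=\langle(1,x_1,x_2,0)\rangle$ for $k=1$) and verifies directly that the resulting count is independent of the free parameters; this second route handles well-definedness and the computation simultaneously and spares you the separate orbit analysis for $R_4$.

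There is, however, a slip in your treatment of column $j=2$. You assert that $(S,R)\in R_2$ is impossible for $k\ge1$ once $r_3\ne0$ and $(r_1,r_2)\ne(0,0)$, but this fails for $k=3$: with your representative $R=\langle(1,1,\epsilon,\theta)\rangle$, the point $S=\langle(1,1,\epsilon,0)\rangle$ lies on $\langle P,R\rangle$, is isotropic (since $1+\epsilon^{q+1}=0$), is collinear with $Q$, and differs from both $Q$ and $R$. Hence $p_{12}^3=1$, exactly as the matrix $L_1$ records. The correct dichotomy is that $S\in\langle P,R\rangle$ with $(Q,S)\in R_1$ forces $S=\langle(1,r_1,r_2,0)\rangle$, which is isotropic precisely when $r_1^{q+1}+r_2^{q+1}=\Tr(r_3)=0$; this holds for $k\in\{1,3\}$ (with $S=R$ when $k=1$) but not for $k\in\{4,5\}$. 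Your row-sum check would eventually flag the discrepancy, but the argument as written is incorrect.
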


\begin{proof}
 To check that  $p_{1j}^k$ is well defined, for any pair $(X,Q)\in R_k$ we count the number of points $R$ collinear with $Q$ and  $j-$related with $X$. As $R=\langle (1,r_1,r_2,r_3) \rangle$ is   collinear with $Q=\langle (1,0,0,0) \rangle$, we have $r_3=0$, so $r_1^{q+1}+r_2^{q+1}=0$. 

Assume $k=1$, and let $X$ be collinear with $Q$. Then, $X=\langle (1,x_1,x_2,0) \rangle$ and   
\[
z(P,R,X)=(r_1x_1^q+r_2x_2^q)\GF(q)^*.
\]

Any pair $(r_1,r_2)\in \GF(q^2)^2\setminus\{(0,0), (x_1,x_2)\}$ such that $r_1x_1^q+r_2x_2^q=0$ is of type $(-(x_2/x_1)^qa,a)$, for some $a\in\GF(q^2)^*\setminus\{x_2\}$. This implies  $p_{11}^1=q^2-2$. When $R\in\langle P,Q \rangle$ it is easy to check that $p_{12}^1=0$.
 
 Let $(r_1,r_2)\in \GF(q^2)^2\setminus\{(0,0), (x_1,x_2)\}$ such that $r_1x_1^q+r_2x_2^q\in\theta\GF(q)^*$. Then, $\theta(r_1x_1^q+r_2x_2^q)\in\theta^2\GF(q)^*=\GF(q)^*$.  Since $(\theta x_1)^{q+1}+(\theta x_2)^{q+1}=\theta^{q+1}(x_1^{q+1}+x_2^{q+1})=0$, counting the elements 3-related with $X=\langle (1,x_1,x_2,0) \rangle$ is equivalent to counting the elements 5-related with $\langle (1,\theta x_1,\theta x_2,0) \rangle$, that is, $p_{13}^1=p_{15}^1$. So we assume $r_1x_1^q+r_2x_2^q\in \GF(q)^*$. For any fixed  $a\in\GF(q)^*$, we have  $r_2=(a-r_1x_1^q)/x_2^q$. From $r_1^{q+1}+r_2^{q+1}=0$, it follows that $\Tr(r_1x_1^q)=a$. As for any given $a$ there are $q$ elements $x\in\GF(q^2)$ such that $\Tr(x)=a$,  we see that $p_{13}^1=p_{15}^1=q(q-1)$.
 Finally $p_{14}^1=\eta_1-(p_{10}^1+p_{11}^1+p_{12}^1+p_{13}^1+p_{15}^1)=q(q-1)^2$.
 
 Assume $k=2$, and let  $X\in\langle P,Q \rangle\cap H(3,q^2)$. Then, $X=\langle (1,0,0,\theta a) \rangle$, for some $a\in\GF(q)^*$, and $z(P,R,X)=\theta^q\GF(q)^*=t$. 
This implies $p_{11}^2=p_{12}^2=p_{14}^2=p_{15}^2=0$ and $p_{13}^2=\eta_1=(q-1)(q+1)^2$.

 Assume $k=3$, and let  $X\in\cX$ such that $\langle P,Q,X \rangle$ is a degenerate plane. Then, $X=\langle (1,x_1,x_2,\theta a) \rangle$ for some $a\in\GF(q)^*$, with $x_1^{q+1}+x_2^{q+1}=0$, $x_1\neq0\neq x_2$. We have  $z(P,R,X)=(\theta a+r_1x_1^q+r_2x_2^q)\GF(q)^*$.

It is easy to see that $p_{11}^3=p_{13}^1=q(q-1)$ and $p_{12}^3=1$.  

Let $(r_1,r_2)\in \GF(q^2)^2\setminus\{(0,0),(x_1,x_2)\}$ such that $r_1x_1^q+r_2x_2^q=\theta(b-a)$ for some $b\in\GF(q)^*$. For any given $b \neq a$, we have $q$ such pairs,  and this gives $q(q-2)$ pairs as $b$ varies in $\GF(q)^*\setminus\{a\}$. Let $b=a$. Then, the  number of pairs $(r_1,r_2)\in \GF(q^2)^2\setminus\{(0,0),(x_1,x_2)\}$ with $r_1^{q+1}+r_2^{q+1}=0$ and $r_1x_1^q+r_2x_2^q=0$ is $q^2-2=p^1_{11}$. Therefore,  $p_{13}^3=2(q^2-q-1)$.

Let $(r_1,r_2)\in \GF(q^2)^2\setminus\{(0,0),(x_1,x_2)\}$ such that $r_1x_1^q+r_2x_2^q=b-\theta a$, for some $b\in\GF(q)^*$. For any fixed  $b\in\GF(q)^*$, we have  $r_2=(b-\theta a- r_1x_1^q)/x_2^q$. By plugging this into  $r_1^{q+1}+r_2^{q+1}=0$, we get  $\Tr((b+\theta a)r_1x_1^q)=b^2-\xi a^2=\N(b-\theta a)$. As there are $q$ elements $x\in\GF(q^2)$ such that $\Tr(x)=c$, for any given $c\in\GF(q)$, we get $p_{15}^3=q(q-1)$. 
Finally $p_{14}^3=\eta_1-(p_{10}^3+p_{11}^3+p_{12}^3+p_{13}^3+p_{15}^3)=q(q-1)^2$.
 
Assume $k=4$, and let  $X\in\cX$ such that $z(P,Q,X)\in\Gamma$. Then, $X=\langle (1,x_1,x_2,w^i a) \rangle$ for some  $i\neq 0,(q+1)/2$ and $a\in\GF(q)^*$, with $x_1^{q+1}+x_2^{q+1}=a\Tr(w^i)$. We have  
 \[
z(P,R,X)=(w^{iq}a-r_1x_1^q-r_2x_2^q)\GF(q)^*.
\]

Let $w^{iq}a-r_1x_1^q-r_2x_2^q=0$. Assume $x_2=0$. Then, $x_1\neq 0$ (otherwise $(X,Q)\in R_2$), $r_1=w^{iq}a/x_1^q$ and $r_2^{q+1}=-\N(r_1)=-a\N(w^i)/\Tr(w^i)$. Therefore, in this case there are $q+1$ pairs $(r_1,r_2)$ which satisfy the above properties. 

Assume $x_2\neq0$. Then, $r_2=(w^{iq}a-r_1x_1^q)/x_2^q$. By plugging this into $r_1^{q+1}+r_2^{q+1}=0$, we get 
\[
\Tr(w^i)r_1^{q+1}-\Tr(w^ir_1x_1^q)+a\N(w^i)=0, 
\]
or
\[
(r_1,1)\begin{pmatrix}
\Tr(w^i) & -w^{i}x_1^q\\
-w^{iq}x_1 & a\N(w^i)
\end{pmatrix}
\begin{pmatrix}
r_1^q\\
1
\end{pmatrix}=0,
\]
with $\N(w^i)(a\Tr(w^i)-x_1^{q+1})\neq0$, as $x_2\neq0$. Since $\Tr(w^i)\neq0$, the  above non-singular Hermitian matrix defines a unitary form of $V(2,q^2)$ not admitting the point $\langle (1,0) \rangle$ as a totally isotropic point. Therefore, there are $q+1$ values for $r_1$, whence $q+1$ pairs $(r_1,r_2)$ satisfying the above properties. Hence, $p_{11}^4=q+1$.

Let $R\in\langle P,X \rangle\cap H(3,q^2)$, i.e., $R=\langle (1,r_1,r_2,0) \rangle$. Since $r_1^{q+1}+r_2^{q+1}=a\Tr(w^i)\neq 0$, we get $R\notin H(3,q^2)$, whence  $p_{12}^4=0$. By arguing as we did for $p_{11}^4$, we find $p_{13}^4=p_{15}^4=q^2-1$.

Finally, $p_{14}^4=\eta_1-(p_{10}^4+p_{11}^4+p_{12}^4+p_{13}^4+p_{15}^4)=q^3-q^2-2q$.
 
 Assume $k=5$, and let  $X\in\cX$ such that $z(P,Q,X)=e$. Then, $X=\langle (1,x_1,x_2,a) \rangle$ for some  $a\in\GF(q)^*$, with $x_1^{q+1}+x_2^{q+1}=2a$. We have  
 \[
z(P,R,X)=(a-r_1x_1^q-r_2x_2^q)\GF(q)^*.
\]

We may argue as above  to show that $p_{11}^5=q+1$,  $p_{12}^5=0$, $p_{13}^5=q^2-1$,  $p_{15}^5=(q-2)(q+1)$, and $p_{14}^5=(q+1)(q-1)^2$.
\end{proof}

\begin{lemma}\label{lem_3}
The intersection numbers $p_{2j}^k$ are well defined. They are collected in the following intersection matrix $L_2$ whose $(k,j)-$entry is $p_{2j}^k$:
\[
L_2=\begin{pmatrix}
0 & 0 & q-1 & 0            & 0           & 0 \\
0 & 0 & 0   & q-1       & 0   & 0 \\
1 & 0 & q-2 & 0 &  0          & 0  \\
0 & 1 & 0 & q-2 & 0    & 0\\
0 & 0 & 0 & 0  & q-2   & 1 \\
0 & 0 & 0 & 0  & q-1 & 0
 \end{pmatrix}
\]
\end{lemma}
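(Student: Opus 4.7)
The plan is a direct computation that exploits the restrictive nature of $R_2$: for fixed $X\in\cX$, the $R_2$-neighbours of $X$ are exactly the $q-1$ points of $H(3,q^2)\cap\langle P,X\rangle\setminus\{P,X\}$. First I would choose representatives $\p,\x$ of $P,X$ with $h(\p,\x)=1$; then the isotropy condition $\Tr(a)=0$ on a vector $a\p+\x$ shows that these neighbours are parametrised as $R_c=\langle\theta c\,\p+\x\rangle$ with $c\in\GF(q)^*$. For any representative $\q$ of $Q$, a sesquilinear expansion gives $h(\r_c,\p)=1$ and $h(\q,\r_c)=-\theta c\,h(\q,\p)+h(\q,\x)$, hence the key identity
\[
z(P,Q,R_c)=\bigl(M-\theta c\,N\bigr)\GF(q)^*,
\]
where $M:=h(\p,\q)h(\q,\x)$ and $N:=h(\p,\q)h(\q,\p)=\N(h(\p,\q))\in\GF(q)^*$ (non-zero because $Q\in\cX$). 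Since $h(\x,\p)=1$, we have $z(P,Q,X)=M\GF(q)^*$, so the class of $(X,Q)\in R_k$ is encoded by the coordinates of $M=M_0+\theta M_1$, with $M_0,M_1\in\GF(q)$: $k=1\Leftrightarrow M=0$; $k=5\Leftrightarrow M\in\GF(q)^*$; $k=4\Leftrightarrow M_0M_1\neq 0$; and both $k=2,3$ force $M\in\theta\GF(q)^*$, distinguished by whether $Q\in\langle P,X\rangle$ or not.

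With this identity in hand I would run through the six values of $k$. For $k=0$ every $R_c$ shares the hyperbolic line $\langle P,X\rangle$ with $Q=X$, giving $p_{22}^0=q-1$. For $k=1$ one has $M=0$, so $z(P,Q,R_c)=-\theta cN\in t$ for all $c$, giving $p_{23}^1=q-1$. For $k=2$, writing $\q=\theta d\,\p+\x$ forces $M=\theta d$ and $z(P,Q,R_c)=\theta(d-c)\GF(q)^*$: the single choice $c=d$ yields $R_c=Q$ (contributing $p_{20}^2=1$) and the remaining $q-2$ values keep $R_c\in\langle P,X\rangle\setminus\{P,Q\}$ (contributing $p_{22}^2=q-2$). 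For $k=3$ one has $M=\theta m$ with $m\neq 0$ but $Q\notin\langle P,X\rangle$, so $z(P,Q,R_c)=\theta(m-cN)\GF(q)^*$ vanishes at the unique $c=m/N$ (giving $p_{21}^3=1$) and otherwise lies in $t$; because $Q\notin\langle P,R_c\rangle$, the span $\langle P,Q,R_c\rangle=\langle P,Q,X\rangle$ is a (degenerate) plane, and Lemma \ref{lem_7} places those $q-2$ pairs in $R_3$. For $k=4$ both $M_0,M_1$ are non-zero: the unique $c=M_1/N$ kills the $\theta$-part leaving $z=M_0\in e$ (so $p_{25}^4=1$), while the remaining $q-2$ preserve both coordinates non-zero (so $p_{24}^4=q-2$). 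For $k=5$ we have $M=M_0\in\GF(q)^*$, and $z(P,Q,R_c)=M_0-\theta cN$ always has both coordinates non-zero, giving $p_{24}^5=q-1$.

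Assembling these counts into the $(k,j)$-matrix reproduces $L_2$ verbatim. The only delicate point I expect to need care with is the separation of $R_2$ from $R_3$ inside the single coset $M\in\theta\GF(q)^*$, since the value of $z$ alone does not distinguish them: one must track whether $Q$ lies on the hyperbolic line $\langle P,X\rangle$, and in the $R_3$ case verify that $\langle P,Q,R_c\rangle$ is genuinely a plane (this is immediate from $Q\notin\langle P,R_c\rangle=\langle P,X\rangle$). Apart from this bookkeeping and the routine pairwise non-collinearity checks needed to apply Lemma \ref{lem_7}, everything reduces to elementary linear algebra in the decomposition $\GF(q^2)=\GF(q)\oplus\theta\GF(q)$.
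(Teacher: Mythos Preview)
Your proof is correct and actually cleaner than the paper's. Both arguments exploit that the $R_2$-neighbours of a point lie on a hyperbolic line through $P$ and are parametrised by $\GF(q)^*$, but the organisation differs. The paper fixes explicit coordinates $P=\langle(0,0,0,1)\rangle$, $Q=\langle(1,0,0,0)\rangle$, parametrises $R\in\langle P,Q\rangle$ as $\langle(1,0,0,\theta a)\rangle$, and then for each $k$ writes down a concrete $X$ and computes $z(P,R,X)$ case by case. You instead work coordinate-free: normalising $h(\p,\x)=1$, you parametrise the $R_2$-neighbours of $X$ (rather than of $Q$, which is immaterial by the symmetry $p_{2j}^k=p_{j2}^k$) and extract the single identity $z(P,Q,R_c)=(M-\theta cN)\GF(q)^*$ with $N\in\GF(q)^*$. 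This identity does all the work: the relation class $k$ is read off from the $\GF(q)$-coordinates $(M_0,M_1)$ of $M$, and the distribution of $j$ over $c\in\GF(q)^*$ becomes a trivial affine count in $\GF(q)$. The paper's coordinate approach has the advantage of being completely explicit and parallel to its proofs of the neighbouring lemmas for $L_1,L_3,L_5$; your approach buys a uniform treatment of all six rows at once and isolates precisely the one subtlety (separating $R_2$ from $R_3$ when $M\in\theta\GF(q)^*$) that the coset value $z$ alone cannot see.
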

\begin{proof}
 To check that  $p_{2j}^k$ is well defined, for any pair $(X,Q)\in R_k$ we count the number of points $R\in\cX$ such that $R$ is on the hyperbolic line spanned by $P$ and $Q$  with  $(R,X)\in R_j$.
It is easily seen that $R=\langle (1,0,0,a\theta) \rangle$, for some $a\in\GF(q)^*$. 
We have $p_{21}^k=p_{12}^k$, for $k=0,\ldots,5$.

Assume $k=1$. Then, $X=\langle (1,x_1,x_2,0) \rangle$, with $x_1\neq0\neq x_2$. Therefore, there is  no point $R$  on the hyperbolic line spanned by $P$ and $X$ giving $p_{22}^1=0$.
In addition, 
\[
z(P,R,X)=\theta a\GF(q)^*=t.
\]
 
 This implies $p_{24}^1=p_{25}^1=0$ and $p_{23}^1=q-1$.
 
 Assume $k=2$, and let   $X=\langle (1,0,0,b\theta ) \rangle$, for some $b\in\GF(q)^*$. Since $R\in\langle P,Q \rangle\setminus\{Q,X\}$, we have $p_{22}^2=q-2$. This also implies $p_{23}^2=p_{24}^2=p_{25}^2=0$.

 Assume $k=3$, and let  $X=\langle (1,x_1,x_2,b\theta ) \rangle$ for some $b\in\GF(q)^*$, with $x_1^{q+1}+x_2^{q+1}=0$, $x_1\neq0\neq x_2$. Then, $p_{22}^3=0$.  
In addition,
\[
z(P,R,X)=\theta(a+b)\GF(q)^*.
\]
For $a=-b$, $(R,X)\in R_1$. For $a\neq -b$, $z(P,R,X)=t$. Therefore, $p_{24}^3=p_{25}^3=0$ and $p_{23}^3=q-2$.  
 
  Assume $k=4$, and let  $X=\langle (1,x_1,x_2,x_3) \rangle$ for some  $x_3\notin\GF(q)^*\cup \theta\GF(q)^*$. Then, $p_{22}^4=0$, otherwise $x_3\in \theta\GF(q)^*$.
  In addition, 
\[
z(P,R,X)=(a\theta+x_3^q)\GF(q)^*.
\]

Therefore, $p_{23}^4=0$. To calculate $p_{25}^4$ we see that $a\theta+x_3^q\in\GF(q)^*$ if and only if $a=(x_3-x_3^q)/2\theta$. Therefore, $p_{25}^4=1$ and $p_{24}^4=q-2$.   

 Assume $k=5$, and let  $X=\langle (1,x_1,x_2,b) \rangle$ for some  $b\in\GF(q)^*$. Then, $p_{22}^5=0$, otherwise $b=0$. In addition,
 \[
z(P,R,X)=(a\theta+b)\GF(q)^*.
\]

Therefore, there are no $a\in \GF(q)^*$ such that $a\theta+b\in\GF(q)^*$ or $a\theta+b\in\theta\GF(q)^*$. This implies, $p_{25}^5=p_{23}^5=0$ and $p_{24}^5=q-1$. 
\end{proof}
\begin{lemma}\label{lem_4}
The intersection numbers $p_{3j}^k$ are well defined. They are collected in the following intersection matrix $L_3$ whose $(k,j)-$entry is $p_{3j}^k$:
{\footnotesize
\[
L_3=\begin{pmatrix}
0 & 0 & 0 & (q^2-1)^2           & 0           & 0 \\
0 & q(q-1) & q-1   & 2(q-1)(q^2-q-1)       & q(q-1)^3   &q(q-1)^2 \\
0 & (q-1)(q+1)^2 & 0 & (q^2-1)(q^2-q-2) &  0          & 0  \\
1 & 2(q^2-q-1) & q-2 & 2q^3-5q^2+q+4 & q(q-1)^3    & q(q-1)^2\\
0 & q^2-1 & 0 &  (q^2-1)(q-1) & q^4-2q^3-q^2+3q+1   & q(q+1)(q-2) \\
0 & q^2-1 & 0 & (q^2-1)(q-1)  & q(q^2-1)(q-2) & (q^2-1)(q-1)
 \end{pmatrix}
\]}
\end{lemma}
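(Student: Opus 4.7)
The plan is to follow the template of Lemmas \ref{lem_2} and \ref{lem_3}. Using the $\PGU(4,q^2)$-transitivity on ordered pairs of non-collinear points of $H(3,q^2)$, I fix $P=\langle(0,0,0,1)\rangle$ and $Q=\langle(1,0,0,0)\rangle$ and write $R\in\cX$ as $R=\langle(1,r_1,r_2,r_3)\rangle$ with $r_1^{q+1}+r_2^{q+1}=r_3+r_3^q$. The hypothesis $(Q,R)\in R_3$, namely $z(P,Q,R)=t$, unpacks to $r_3=\theta c$ for some $c\in\GF(q)^*$ together with $r_1^{q+1}+r_2^{q+1}=0$ and $(r_1,r_2)\neq(0,0)$; there are $(q^2-1)(q+1)(q-1)=\eta_3$ such triples $(r_1,r_2,c)$, which will serve as the row-sum check for each row of $L_3$.

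Two columns of $L_3$ are immediate. Since $\mathfrak X_P$ is symmetric, $p_{3j}^k=p_{j3}^k$, so the column $j=1$ (resp.\ $j=2$) of $L_3$ is the column $j=3$ of $L_1$ (resp.\ $L_2$), and a direct comparison confirms the claimed values. Row $k=0$ is the identity $p_{3j}^0=\eta_3\delta_{3j}$ together with $\eta_3=(q^2-1)^2$ from Lemma \ref{lemma_1}. For the remaining entries $p_{3j}^k$ with $k\in\{1,\ldots,5\}$ and $j\in\{3,4,5\}$, I pick a representative $X=\langle(1,x_1,x_2,x_3)\rangle$ with $(X,Q)\in R_k$ as in the earlier lemmas and compute
\[
z(P,R,X)=(x_3^q-r_1x_1^q-r_2x_2^q+\theta c)\,\GF(q)^*.
\]
The three values of $j\in\{3,4,5\}$ correspond respectively to this element lying in $\theta\GF(q)^*$, in $T\setminus\{e,t\}$, or in $\GF(q)^*$. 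For fixed $c$, the constraint $r_1x_1^q+r_2x_2^q=\lambda$ is linear in $(r_1,r_2)$, and combining it with $r_1^{q+1}+r_2^{q+1}=0$ reduces the enumeration either to a trace condition of type $\Tr(r_1x_1^q)=\mu$ (as in the $p_{13}^k$ calculations of Lemma \ref{lem_2}) or to counting isotropic vectors of a non-singular Hermitian form $(r_1,1)M(r_1^q,1)^\top=0$ on $V(2,q^2)$ (as in the $p_{11}^4$ calculation). Summing over $c\in\GF(q)^*$ produces the table, and uniformity of the count in the free parameters of $X$ establishes that the $p_{3j}^k$ are well defined.

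I expect the hardest entry to be $p_{33}^3=2q^3-5q^2+q+4$, where both $X$ and $R$ are $R_3$-related to $Q$: here $X=\langle(1,x_1,x_2,\theta b)\rangle$ with $x_1^{q+1}+x_2^{q+1}=0$, $b\in\GF(q)^*$ and $x_1x_2\neq 0$, and one must split according to whether $r_1x_1^q+r_2x_2^q=0$ (the degenerate sub-case, contributing $q^2-2$ solutions for the coincidence $c=b$, and recovering the $p_{11}^1$ argument) or $r_1x_1^q+r_2x_2^q=\theta(c-b-d)$ for some $d\in\GF(q)^*$ (the generic sub-case, where one solves for $r_2$ in terms of $r_1$ and applies the norm condition). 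The analogous splitting $x_2=0$ versus $x_2\neq 0$ appears in $p_{34}^4$, but the Hermitian-form count there replays the $p_{11}^4$ argument. Verifying that each row of $L_3$ sums to $(q^2-1)^2$ is the final consistency check.
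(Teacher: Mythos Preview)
Your plan is exactly the paper's: use $p_{3j}^k=p_{j3}^k$ to import the columns $j=1,2$ from $L_1,L_2$, parametrise $R=\langle(1,r_1,r_2,\theta a)\rangle$ with $r_1^{q+1}+r_2^{q+1}=0$ and $a\in\GF(q)^*$, compute $p_{33}^k$ and $p_{35}^k$ directly via the trace or Hermitian-form counts, and recover $p_{34}^k$ by subtraction from $\eta_3$. Two small bookkeeping slips to fix when you carry it out: in $p_{33}^3$ the degenerate branch $r_1x_1^q+r_2x_2^q=0$ occurs when your target parameter satisfies $d=c-b$, hence for $c\neq b$ (not $c=b$), and there it contributes $(q-2)(q^2-2)$ triples, while the genuine $c=b$ branch is non-degenerate and contributes $q(q-1)$; and the $x_2=0$ versus $x_2\neq0$ split with the Hermitian-form count belongs to the direct computation of $p_{33}^4$ and $p_{35}^4$, whereas $p_{34}^4$ itself is obtained by subtraction.
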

\begin{proof}
To check that  $p_{3j}^k$ is well defined, for any pair $(X,Q)\in R_k$ we count the number of points $R$ which are $3-$related  with $Q$ and  $j-$related with $X$. It is easily seen that $R=\langle (1,r_1,r_2,a\theta) \rangle$, for some $a\in\GF(q)^*$, with $r_1^{q+1}+r_2^{q+1}=0, r_1 \neq 0 \neq r_2$ (otherwise $(Q,R)\in R_2$).
From the previous calculations, we already have $p_{31}^k=p_{13}^k$, $p_{32}^k=p_{23}^k$, for $k=0,\ldots,5$.

Assume $k=1$. Then, $X=\langle (1,x_1,x_2,0) \rangle$, with $x_1^{q+1}+x_2^{q+1}=0$, $x_1\neq0\neq x_2$. Therefore, $(R, X)\in R_3$ if and only if there exists $b\in\GF(q)^*$ such that $r_1x_1^q+r_2x_2^q=\theta (a-b)$. First of all, suppose $a\neq b$. As $\theta (r_1x_1^q+r_2x_2^q) \in \GF(q)^*$ and $(\theta x_1)^{q+1}+(\theta x_2)^{q+1}= \theta ^{q+1}(x_1^{q+1}+x_2^{q+1})=0$, we may consider $(r_1, r_2) \in\GF(q^2)^* \times \GF(q^2)^* $ such that $r_1x_1^q+r_2x_2^q \in \GF(q)^*$ (see the calculation of $p^1_{15}$). Let $c\in \GF(q)^*$, and write $r_2=x_2^{-q}(c-r_1 x^q_1)$. By using $r_1^{q+1}+r_2^{q+1}=0$, it follows that $\Tr(r_1 x_1^q)=c$, which is true for exactly $q$ elements $r_1 \in \GF(q)^*$. Therefore, for $b \in \GF(q)^*\setminus\{a\}$, $q(q-1)(q-2)$ is the number of the triples $(r_1, r_2, a)$, $c$ being one-to-one with $b$. Now, consider $a=b$. Then, $r_1x_1^q+r_2x_2^q=0$, for $(r_1,r_2) \in \GF(q^2)^2\setminus\{(0,0), (x_1,x_2)\}$. So, in the case $a=b$, the number of the triples $(r_1, r_2, a)$ is equal to $(q-1)(q^2-2)$. Finally, by summing the previous two quantities, we have $p^1_{3 3}=2(q-1)(q^2-q-1)$.

We show now that $p^1_{3 5}$ is well-defined, by explicitly calculating it. Thus, $(R, X)\in R_5$ if and only if there exists $b\in\GF(q)^*$ such that $r_1x_1^q+r_2x_2^q=\theta a-b$. By deriving $r_2$ from the previous expression and considering $r_1^{q+1}+r_2^{q+1}=0$ as usual, we obtain $\Tr((\theta a+b)(r_1 x_1^q))=\theta^2a^2-b^2$, which is satisfied by exactly $q$ elements $r_1$. As $a,b\in \GF(q)^*$, $p^1_{3 5}=q(q-1)^2$. 

Finally, $p^1_{3 4}=\eta_3-(p^1_{3 0}+p^1_{3 1}+p^1_{3 2}+p^1_{3 3}+p^1_{3 5})=q(q-1)^3$.

Assume $k=2$. Then, $X=\langle (1,0,0,\theta b) \rangle$ with $b\in \GF(q)^*$, and 
\begin{equation*}
z(P,R,X)=\theta (a+b)\GF(q)^*.
\end{equation*}
This means that, for $a+b\neq 0$ (otherwise $p^2_{3 1}=p^2_{1 3}$), $(R,X)\in R_3$, from which $p^2_{3 j}=0$, for $ j=0, 2, 4, 5$, and $p^2_{3 3}=\eta_3-p^2_{3 1}=(q^2-1)(q^2-q-2)$.

Take $k=3$. Then, $X=\langle (1,x_1,x_2,\theta b) \rangle$, with $x_1^{q+1}+x_2^{q+1}=0$, $x_1\neq0\neq x_2$, for some $b \in\GF(q)^*$. Here, $(R,X)\in R_3$ if and only if  $r_1x_1^q+r_2x_2^q=\theta (c-b+a)$ for some $c \in \GF(q)^*$. We need, at this point, to distinguish different cases. First of all, let $c \neq b$ and $a \neq b-c$. Once multiplied the right hand-side by $\theta$, in order to simplify the calculation, we may equivalentely look at the triples $(r_1,r_2,a)$ such that $r_1x_1^q+r_2x_2^q=\theta ^2 (c-b+a)$, i.e., $r_2=x_2^{-q}(\theta ^2 (c-b+a)-r_1x_1^q)$. By using $r_1^{q+1}+r_2^{q+1}=0$, we find $\Tr(r_1x_1^q)=\theta ^2 (c-b+a)$, which provides $q$ values for $r_1$. Hence, for $c \in \GF(q)^* \setminus\{ b\}, a \in \GF(q)^* \setminus \{b-c\}$, the number of the triples $(r_1,r_2,a)$ is $q(q-2)^2$. Consider now the subcase $c-b+a=0$. Then, $r_2=x_2^{-q}r_1x_1^ q$ for $r_1 \in \GF(q^2)^*\setminus\{x_1\}$ (otherwise $r_2=x_2$ and $(R, X)\in R_2$). So, for $a=b-c\neq 0$, the number of the triples $(r_1, r_2, a)$ is equal to $(q-2)(q^2-2)$. Finally, we explore the case $c=b$. As before, the computation may be reduced to considering the triples $(r_1,r_2,a)$ such that $r_1x_1^q+r_2x_2^q=\theta ^2 a$, i.e., $r_2=x_2^{-q}(\theta ^2 a-r_1x_1^q)$. Since $r_1^{q+1}+r_2^{q+1}=0$ yields $\Tr(r_1x_1^q)=\theta^2 a$, the number of all possible choices for $r_1$ is $q$. To conclude, by putting together all the previous results, we have $p_{3 3}^3=q(q-2)^2+(q-2)(q^2-2)+q(q-1)=2 q^3-5q^2+q+4$.

We count now $R=\langle (1,r_1,r_2,a\theta) \rangle$ such that $(R,X)\in R_5$. This condition means $r_1x_1^q+r_2x_2^q=c+\theta (a-b)$, for some $c\in \GF(q)^*$. By plugging this into $r_1^{q+1}+r_2^{q+1}=0$,  we get  $\Tr((c-\theta(a-b))r_1x_1^q)=c^2-\theta ^2 (a-b)^2$. Thus, there are $q$ elements $r_1$ satisfying this equation, and $p^3_{3 5}=q(q-1)^2$, as $c, a \in \GF(q)^*$.

Finally, in order to conclude the study of the case $k=3$, write $p^3_{3 4}=\eta_3-(p^3_{3 0}+p^3_{3 1}+p^3_{3 2}+p^3_{3 3}+p^3_{3 5})=q(q-1)^3$.

Take $k=4$. Therefore, $X=\langle (1,x_1,x_2,\omega^i b) \rangle$, where $x_1^{q+1}+x_2^{q+1}=\Tr(\omega^i b)$, $b\in \GF(q)^*$. Suppose $R$ is $3-$related with $X$. This means there exists $c\in \GF(q)^*$ such that $r_1x_1^q+r_2x_2^q=\omega ^{i q} b+\theta (a-c)$. At this point, we distinguish when $x_2$ is zero and when it is not. Suppose $x_2=0$. As $x_1 \neq 0$, we have $r_1=x_1^{-q}(\omega ^{i q} b+\theta (a-c))$, hence $r_2^{q+1}=-r_1^{q+1}=-x_1^{-(q+1)}\N(\omega ^{i q} b+\theta (a-c))$, which is satisfied by exactly $q+1$ elements $r_2$. Since $a , c \in \GF(q)^*$, the number of the triples $(r_1,r_2,a)$ is $(q+1)(q-1)^2$. Now suppose $x_2 \neq 0$. Then, $r_2=(\theta (a-c) + \omega^{iq}b-r_1x_1^q)/x_2^q$. By plugging this into $r_1^{q+1}+r_2^{q+1}=0$,  we get 
\[
r_1^{q+1}\Tr(\omega^i b)-\Tr((\omega^{iq} b +\theta(a-c))x_1 r_1^q)+\N(w^{iq}b+\theta(a-c))=0, 
\]
or
\[
(r_1,1)\begin{pmatrix}
\Tr(\omega^i b) & -(\omega^{i}b-\theta (a-c))x_1^q\\
-(\omega^{i q}b+\theta (a-c))x_1 & \N(\omega^{iq}b+\theta(a-c))
\end{pmatrix}
\begin{pmatrix}
r_1^q\\
1
\end{pmatrix}=0,
\]
with $\N(\omega^{iq}b+\theta(a-c))(\Tr(\omega^i b)-x_1^{q+1})\neq0$, as $x_2\neq0$. Since $\Tr(\omega^i)\neq0$, the  above non-singular Hermitian matrix defines a unitary form of $V(2,q^2)$ not admitting the point $\langle (1,0) \rangle$ as a totally isotropic point. Therefore, there are $q+1$ values for $r_1$. As $a , c \in \GF(q)^*$, the number of the triples $(r_1,r_2,a)$ is again $(q+1)(q-1)^2$. Hence, we may write $p^4_{3 3}=(q+1)(q-1)^2$.

Suppose now $(R, X)\in R_5$. This is equivalent to having, for some $c\in \GF(q)^*$, $r_1x_1^q+r_2x_2^q=\omega ^{i q} b+\theta a+c$. The second member is zero if and only if the pair $(a, c)$ concides with the unique pair $(a', c')$ such that $-\omega ^{i q} b=\theta a'+c'$, $\{1,\theta\}$ being a basis for $\GF(q^2)$ over $\GF(q)$. Anyway, for this pair of values, we would have $r_1x_1^q+r_2x_2^q=0$, from which $r_1=0$ (or $r_2=0$), a contradiction. Then, let $(a,c)\neq(a',c')$. At this point, by proceeding as before for the computation of $p^4_{3 3}$, we distinguish the case $x_2=0$ from $x_2\neq0$, and $p^4_{3 5}=(q+1)((q-1)^2-1)=(q+1)q(q-2)$ is obtained. 

Finally,  $p^4_{3 4}=\eta_3-(p^4_{3 0}+p^4_{3 1}+p^4_{3 2}+p^4_{3 3}+p^4_{3 5})=q^4-2q^3-q^2+3q+1$. 

Assume $k=5$, and let  $X=\langle (1,x_1,x_2,b) \rangle$ for some  $b\in\GF(q)^*$, with $x_1^{q+1}+x_2^{q+1}=2b$. Then, $(R, X) \in R_3$ if and only if there exists $c\in \GF(q)^*$ such that $r_1x_1^q+r_2x_2^q=\theta(c-a)-b$. First of all, take $x_2=0$. Here, $r_1=x_1^{-q}(\theta(c-a)-b)$ ($x_1\neq0$) and $r_2^{q+1}=-r_1^{q+1}=-x_1^{-(q+1)}\N(\theta(c-a)-b)$. Since for any $a,c \in \GF(q)^*$ there are $q+1$  values of $r_2$ solving the previous equation, we have $(q+1)(q-1)^2$ triples $(r_1,r_2,a)$ when $x_2=0$. Now, take $x_2\neq0$. Then, $r_2=(\theta (c-a)- b-r_1x_1^q)/x_2^q$. By plugging this into $r_1^{q+1}+r_2^{q+1}=0$,  we obtain 
\[
r_1^{q+1}2 b-\Tr((\theta(c-a)- b)x_1 r_1^q)+\N(\theta(c-a)- b)=0, 
\]
or
\[
(r_1,1)\begin{pmatrix}
2 b & -(-\theta(c-a)- b)x_1^q\\
-(\theta(c-a)- b)x_1 & \N(\theta(c-a)- b)
\end{pmatrix}
\begin{pmatrix}
r_1^q\\
1
\end{pmatrix}=0,
\]
with $\N(\theta(c-a)- b)(2 b-x_1^{q+1})\neq0$, as $x_2\neq0$. Since $b \neq 0$, the  above non-singular Hermitian matrix defines a unitary form of $V(2,q^2)$ not admitting the point $\langle (1,0) \rangle$ as a totally isotropic point. Therefore, there are $q+1$ values for $r_1$. As $a , c \in \GF(q)^*$, the number of the triples $(r_1,r_2,a)$ is again $(q+1)(q-1)^2$. Thus, we may write $p^5_{3 3}=(q+1)(q-1)^2$.

In order to complete the entries of $L_3$, the case $(R,X)\in R_5$ ($k=5$) remains to be studied, as $p^5_{3 4}$ will be obtained by taking a difference. Considering $(R,X)\in R_5$ is equivalent to having $r_1x_1^q+r_2x_2^q=c-b-\theta a$, for some $c\in \GF(q)^*$. Since $x_1^{q+1}+x_2^{q+1}=2b$, with $b\in \GF(q)^*$, we may distinguish the two cases $x_2=0$ and $x_2 \neq 0$, and then proceed exactly as before in the computation of  $p^5_{3 3}$, so getting $p^5_{3 5}=p^5_{3 3}=(q+1)(q-1)^2$. 

Finally, $p^5_{3 4}=\eta_3-(p^5_{3 0}+p^5_{3 1}+p^5_{3 2}+p^5_{3 3}+p^5_{3 5})=(q^2-1)q(q-2)$.
\end{proof}

\begin{lemma}\label{lem_5}
The intersection numbers $p_{5j}^k$ are well defined. They are collected in the following intersection matrix $L_5$ whose $(k,j)-$entry is $p_{5j}^k$:
\[
L_5=\begin{pmatrix}
0 & 0 & 0 & 0        & 0           & (q^3-q)(q-1) \\
0 & q(q-1) & 0   & q(q-1)^2       & q(q-1)^3   &q(q-2)(q-1) \\
0 & 0 & 0 & 0 &  (q^3-q)(q-1)        & 0  \\
0 & q(q-1) & 0 & q(q-1)^2 & q^2(q-1)(q-2)    & q(q-1)^2\\
0 & q^2-1 & 1 &  q(q+1)(q-2) & q^4-3q^3+q^2+4q-1   & q^3-2q^2-q+1 \\
1 & (q-2)(q+1) & 0 & (q^2-1)(q-1)  & q^4-3q^3+q^2+2q-1 & q^3-2q^2+q+1 
 \end{pmatrix}
\]
\end{lemma}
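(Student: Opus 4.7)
The plan is to compute the entries of $L_5$ by the same strategy as in Lemmas \ref{lem_2}--\ref{lem_4}: using the $\PGU(4,q^2)$-transitivity on pairs of non-collinear points of $H(3,q^2)$, normalise $P = \langle (0,0,0,1)\rangle$ and $Q = \langle (1,0,0,0)\rangle$ and parametrise a generic $R \in \cX$ with $(Q,R) \in R_5$ as $R = \langle (1,r_1,r_2,r_3)\rangle$ with $r_3 \in \GF(q)^*$ and $r_1^{q+1}+r_2^{q+1} = 2r_3$. For each $k$, the representative of $X$ is taken in the canonical form already used in Lemmas \ref{lem_3} or \ref{lem_4}.

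A large part of $L_5$ requires no fresh computation. The column $j = 0$ is forced: $p_{50}^k = 1$ for $k=5$ (forced by $R = X$) and $0$ otherwise. Moreover, since $\mathfrak X_P$ is a symmetric scheme one has $p_{5j}^k = p_{j5}^k$, and so the columns $j = 1, 2, 3$ of $L_5$ coincide termwise with the $j=5$ column of $L_1$, $L_2$, $L_3$ respectively; a direct comparison with the matrices already tabulated confirms these entries. The genuinely new work is to compute $p_{55}^k$ for $k = 0, \ldots, 5$, after which $p_{54}^k$ follows from the regularity identity $\sum_{j=0}^{5} p_{5j}^k = \eta_5 = (q^3-q)(q-1)$.

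For $p_{55}^k$ the condition $(R,X) \in R_5$ unravels to $h(\r, \x) = r_3 + x_3^q - r_1 x_1^q - r_2 x_2^q \in \GF(q)^*$. The cases $k = 0, 1, 2$ are immediate: $X = Q$ gives $p_{55}^0 = \eta_5$ (every such $R$ contributes), while for $k = 1$ (where $x_3 = 0$) and $k = 2$ (where $x_1 = x_2 = 0$) one reduces directly to counting on the norm equation $r_1^{q+1}+r_2^{q+1} = 2r_3$. The substantive cases $k = 3, 4, 5$ follow the template of Lemmas \ref{lem_3} and \ref{lem_4}: for each $c \in \GF(q)^*$ impose $h(\r, \x) = c$ and split by whether $x_2$ vanishes. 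When $x_2 \neq 0$, solve linearly for $r_2$ in terms of $r_1$ and substitute into $r_1^{q+1}+r_2^{q+1}=2r_3$; the result is a binary Hermitian form in $r_1$ that is non-singular and non-isotropic at $\langle (1,0)\rangle$, contributing exactly $q+1$ solutions for each fixed pair $(c, r_3)$. When $x_2 = 0$, solve for $r_1$ directly and count $r_2$'s on the resulting norm equation. Summing contributions over $c, r_3 \in \GF(q)^*$ and subtracting the degenerate configurations in which $R$ collapses onto $Q$ or $X$ produces the asserted values, and $p_{54}^k$ is then recovered by subtraction.

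The main obstacle is the usual bookkeeping of degeneracies in the Hermitian form analysis: the subcase in which the right-hand side $r_3 + x_3^q - c$ vanishes must be split off, since those triples actually place $(R,X)$ into $R_1$ or $R_2$ rather than $R_5$, and occasional coincidences in which a candidate $R$ would equal $Q$ or $X$ must be excised. A final check that each row of $L_5$ sums to $\eta_5 = (q^3-q)(q-1)$ provides a welcome confirmation of the arithmetic.
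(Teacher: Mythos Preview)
Your overall strategy---importing columns $j=1,2,3$ from the earlier lemmas via $p_{5j}^k=p_{j5}^k$, computing $p_{55}^k$ directly, and recovering $p_{54}^k$ by subtraction---is exactly what the paper does. The cases $k\le 3$ also go through as you describe.

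The gap is in your treatment of $k=4$ and $k=5$. You assert that after solving for $r_2$ and substituting into $r_1^{q+1}+r_2^{q+1}=2r_3$ the resulting binary Hermitian form in $r_1$ is \emph{non-singular}, giving $q+1$ solutions for every pair $(c,r_3)$. This is false here, and it is precisely the new difficulty in Lemma~\ref{lem_5} compared with Lemmas~\ref{lem_3} and~\ref{lem_4}. In the earlier lemmas the relevant norm constraint was $r_1^{q+1}+r_2^{q+1}=0$, so the $(2,2)$-entry of the Hermitian matrix was a pure norm and the determinant factored with a nonzero factor $x_2^{q+1}$. Now the constraint is $r_1^{q+1}+r_2^{q+1}=2r_3$ with $r_3\ne 0$, which shifts the $(2,2)$-entry by $-2r_3\,x_2^{q+1}$; the determinant becomes (for $k=4$) $\bigl(\N(\omega^{i}b+r_3-c)-2r_3\Tr(\omega^{i}b)\bigr)x_2^{q+1}$, and this \emph{does} vanish for certain $(c,r_3)$. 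When it vanishes the form is singular and contributes a single $r_1$ rather than $q+1$. The paper handles this by viewing the vanishing condition as a quadratic in $r_3$, computing its discriminant $\Delta$, and splitting over whether $\Delta$ is zero, a nonzero square, or a nonsquare in $\GF(q)$; only after this three-way split do the contributions sum to $q^3-2q^2-q+1$ (for $k=4$) and $q^3-2q^2+q+1$ (for $k=5$). A naive $(q+1)(q-1)^2$ count is off by $q^2$ in the $k=4$ case. The degeneracies you flag---$h(\r,\x)=0$ and $R\in\{Q,X\}$---are separate and do not account for this discrepancy, so the discriminant analysis cannot be skipped.
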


\begin{proof}
To check that  $p_{5j}^k$ is well defined, for any pair $(X,Q)\in R_k$ we count the number of points $R$ that are $5-$related  with $Q$ and  $j-$related with $X$. It is easy to see that $R=\langle (1,r_1,r_2,a) \rangle$, for some $a\in\GF(q)^*$, with $r_1^{q+1}+r_2^{q+1}=2 a$.
From the previous calculations, we already have $p_{51}^k=p_{15}^k$, $p_{52}^k=p_{25}^k$, $p_{53}^k=p_{35}^k$, for $k=0,\ldots,5$.

Let $k=1$. Thus, $X=\langle (1,x_1,x_2,0) \rangle$, with $x_1^{q+1}+x_2^{q+1}=0$, $x_1\neq0\neq x_2$. Therefore, $(R, X)\in R_5$ if and only if there exists $b\in\GF(q)^*$ such that $r_1x_1^q+r_2x_2^q=a-b$. Note that for $a=b$, there is no triple $(r_1,r_2,a)$ satysfing the previous equation because of the conditions on $(x_1, x_2)$. For this reason, set $a\neq b$. By writing $r_2=x_2^{-q}(a-b-r_1x_1^q)$ and substituting it into $r_1^{q+1}+r_2^{q+1}=2 a$, we get $\Tr(r_1x_1^q)=(a-b)^{-1}((a-b)^2-2 a x_2^{q+1})$, that is satisfied by $q$ elements $r_1$. Therefore, as $b\in \GF(q)^*, a \in \GF(q)^*\setminus\{b\}$, we obtain $p^1_{5 5}=(q-2)(q-1)q$.

Then, $p^1_{5 4}=\eta_5-(p^1_{5 0}+p^1_{5 1}+p^1_{5 2}+p^1_{5 3}+p^1_{5 5})=q(q-1)^3$.

Assume $k=2$, and let $X=\langle (1,0,0,\theta b) \rangle$ with $b\in \GF(q)^*$. Note that  
\begin{equation*}
z(P,R,X)=(a-\theta b)\GF(q)^* \in \Gamma, 
\end{equation*}
i.e., $j=4$, from which $p^2_{5 5}=0$, $p^2_{5 4}=\eta_5$.

Assume $k=3$, and let $X=\langle (1,x_1,x_2,\theta b) \rangle$, with $x_1^{q+1}+x_2^{q+1}=0$, $x_1\neq0\neq x_2$. Thus, $(R, X)\in R_5$ if and only if there exists $c\in \GF(q)^*$ such that $r_1x_1^q+r_2x_2^q=c+a-\theta b$, i.e., $r_2=x_2^{-q}(c+a-\theta b-r_1x_1^q)$. By using $r_1^{q+1}+r_2^{q+1}=2 a$, it follows that $\Tr((c+a+\theta b)r_1x_1^q)=(c+a)^2-\theta^2 b^2-2a$. Since the latter equation in the unknown $r_1$ admits $q$ solutions, for any $a,c \in \GF(q)^*$, we have $q(q-1)^2$ triples $(r_1,r_2,a)$, i.e., $p^3_{5 5}=q(q-1)^2$.

Therefore, $p^3_{5 4}=\eta_5-(p^3_{5 0}+p^3_{5 1}+p^3_{5 2}+p^3_{5 3}+p^3_{5 5})=q^2(q-1)(q-2)$.

Take $k=4$. Thus, $X=\langle (1,x_1,x_2,\omega^i b) \rangle$, where $x_1^{q+1}+x_2^{q+1}=\Tr(\omega^i b)$, $b\in \GF(q)^*$. Suppose $R$ is $5-$related with $X$. This means there exists $c\in \GF(q)^*$ such that $r_1x_1^q+r_2x_2^q=\omega ^{i q} b+a-c$. At this point, we study separately when $x_2$ is zero and when it is not. Suppose $x_2=0$. As $x_1 \neq 0$, we find $r_1=x_1^{-q}(\omega ^{i q} b+a-c)$, hence $r_2^{q+1}=-r_1^{q+1}+2 a=-x_1^{-(q+1)}\N(\omega ^{i q} b+a-c)+2 a$. The latter equation is satisfied by exactly $q+1$ elements $r_2$ if $-r_1^{q+1}+2 a\neq0$, otherwise $r_2=0$. So it is necessary to study the case $-r_1^{q+1}+2 a=0$. Consider $r_1^{q+1}=2 a$, i.e., $x_1^{-(q+1)}\N(\omega ^{i q} b+a-c)=2 a$. By writing this expression explicitly, we find that the elements $a\in\GF(q)^*$ for which $r_1^{q+1}=2 a$ are the solutions of the equation 
\begin{equation}\label{eq_2}
X^2+(\Tr(\omega^i b)-2x_1^{q+1}-2c)X+c^2-c\Tr(\omega^i b)+\N(\omega^i b)=0,
\end{equation}
whose discriminant is
\begin{align*}
\Delta&=8 c x_1^{q+1}-4\N(\omega^i b)+(\Tr(\omega^i b)-2x_1^{q+1})^2 \\ & =  8 c x_1^{q+1}+b^2(\omega^{iq}-\omega^{i})^2.
\end{align*}

For $\overline{c}=-b^2(\omega^{iq}-\omega^{i})^2/8 x_1^{q+1}$, $\Delta=0$ holds, i.e., there is a unique $X=\overline{a}$ satisfying \eqref{eq_2}, to which $\overline r_1=x_1^{-q}(\omega ^{i q} b+\overline{a}-\overline{c})$ and $\overline r_2=0$ correspond. Therefore, for $c=\overline{c}$, we get the triple $(\overline{r_1},0,\overline{a})$ and further $(q+1)(q-2)$ triples $(r_1,r_2,a)$ with $r_1=x_1^{-q}(\omega ^{i q} b+a-\overline{c})$, $a\neq \overline{a}$, $r_2\neq0$.

Assume $\Delta$ is a square in $\GF(q)^*$, i.e., $\Delta$ is a $\frac{q-1}{2}-$th root of the unity. Precisely one element $c$ corresponds with any such a root, and for any such a $c$, there are two values of $X$ satisfying (\ref{eq_2}).
Therefore, for every fixed $c$ among the previous ones, we get two triples of type $(r_1,0,a)$ and further $(q+1)(q-3)$ triples with $r_2\neq0$, i.e., in total we have $\frac{q-1}{2}(2+(q+1)(q-3))$.

For the remaining $q-1-(1+\frac{q-1}{2})= \frac{q-3}{2}$ values of $c\in \GF(q)^*$, $\Delta$ is a non-square, i.e., there is no $a\in \GF(q)^*$ making  $-r_1^{q+1}+2 a$ equal to zero. This means that, here, in the light of the initial considerations on both $r_1$ and $r_2$, we have $\frac{q-3}{2}(q-1)(q+1)$.
To  sum up, for $x_2=0$, the number we seek is $1+(q-2)(q+1)+\frac{q-1}{2}(2+(q+1)(q-3))+\frac{q-3}{2}(q-1)(q+1)=q^3-2q^2-q+1$.

Now suppose $x_2 \neq 0$. Then, $r_2=(a-c+ \omega^{iq}b-r_1x_1^q)/x_2^q$. By plugging this into $r_1^{q+1}+r_2^{q+1}=2a$,  we get 
\[
r_1^{q+1}\Tr(\omega^i b)-\Tr((\omega^{i} b +a-c)r_1 x_1^q)+\N(w^{i}b+a-c)-2ax_2^{q+1}=0, 
\]
or
\[
(r_1,1)\begin{pmatrix}
\Tr(\omega^i b) & -(\omega^{i}b+a-c)x_1^q\\
-(\omega^{i q}b+a-c)x_1 & \N(\omega^{i}b+a-c)-2ax_2^{q+1}
\end{pmatrix}
\begin{pmatrix}
r_1^q\\
1
\end{pmatrix}=0,
\]
whose determinant is $\N(\omega^{i}b+a-c)(\Tr(\omega^i b)-x_1^{q+1})-2a\Tr(\omega^i b)x_2^{q+1}$, or better $(\N(\omega^{i}b+a-c)-2a\Tr(\omega^i b))x_2^{q+1}$. If $\N(\omega^{i}b+a-c)\neq2a\Tr(\omega^i b)$, since $\Tr(\omega^i)\neq0$, the above Hermitian matrix is a non-singular matrix which defines a unitary form of $V(2,q^2)$ not admitting the point $\langle (1,0) \rangle$ as a totally isotropic point. Therefore, there are $q+1$ values for $r_1$ if the determinant is non-zero, otherwise there is a unique $r_1$ satisfying the above sesquilinear form. So $\N(\omega^{i}b+a-c)-2a\Tr(\omega^i b)=0$ needs to be studied. In this case, by writing the expression explicitly, we find that the elements $a\in\GF(q)^*$ 
making the determinant equal to zero are the solutions of the equation 
\begin{equation*}\label{eq_2bis}
X^2-(\Tr(\omega^i b)+2c)X+c^2-c\Tr(\omega^i b)+\N(\omega^i b)=0,
\end{equation*}
whose discriminant is
\begin{equation*}
\Delta=8 c\Tr(\omega^i b)-4\N(\omega^i b)+(\Tr(\omega^i b))^2.
\end{equation*}
Here, for $x_2\neq0$, by arguing exactly as before, the number of triples obtained is again $q^3-2q^2-q+1$, and so we may denote it by $p^4_{5 5}$.

Finally, we have $p^4_{5 4}=\eta_5-(p^4_{5 0}+p^4_{5 1}+p^4_{5 2}+p^4_{5 3}+p^4_{5 5})=q^4-3q^3+q^2+4q-1$.

Assume $k=5$. Then, $X=\langle (1,x_1,x_2,b) \rangle$ for some $b\in\GF(q)^*$, with $x_1^{q+1}+x_2^{q+1}=2b$. Therefore, $(R,X)\in R_5$ if and only if there is $c\in\GF(q)^*$ such that $r_1x_1^q+r_2x_2^q=a+ b-c$. As $b$ is fixed, we may set $d=b-c$ with $d\in \GF(q)\setminus \{b\}$. Assume $x_2=0$, i.e., $x_1^{q+1}=2b$. Thus $r_1=x_1^{-q}(a+ d)$, from which 
\[
r_2^{q+1}=-r_1^{q+1}+2a=-x_1^{-(q+1)}(a+ d)^2+2a.
\]
 This equation gives $r_2=0$  whenever the latter quantity is zero, otherwise it gives $q+1$ non-zero values for $r_2$. 
 Assume  first $d=0$. Then, $a^2-2 x_1^{q+1}a=0$ if and only if $a=2x_1^{q+1}=4 b$, thus $ r_1=4b/x_1^q$. Therefore, for $d=0$, we get only one  triple with $r_2=0$ and $(q-2)(q+1)$ triples with $r_2\neq0$.
 Now assume $d\neq0$.  The values of $a\in \GF(q)^*$ making $-r_1^{q+1}+2a=-x_1^{-(q+1)}(a+ d)^2+2a$ equal to zero  are the solutions of the equation $X^2+2(d-x_1^{q+1})X+d^2=0$, whose discriminant is $\Delta = x_1^{q+1}(x_1^{q+1}-2d)=4 bc\neq0$. Thus $\Delta$ is either a non-zero square or a non-square in $\GF(q)$. When $\Delta$ is a non-zero square, $x_1^{q+1}(x_1^{q+1}-2d)$ is a $\frac{q-1}{2}-$th root of unity, and for such a root, and hence for the corresponding $d$, there are two values of $a$ satisfying the previous quadratic equation in $X$. Note that for $d=0$, $x_1^{2(q+1)}$ is evidently a $\frac{q-1}{2}-$th root of unity (but $d=0$ has already been analysed before), while  $\Delta$ would be $0$ for $d=b$. Therefore, for any $\frac{q-1}{2}-$th root of unity but $x_1^{2(q+1)}$, we find two triples with $r_2=0$ and  $(q-3)(q+1)$ triples with $r_2\neq0$. For the remaining $\frac{q-1}{2}$ non-zero elements of $\GF(q)$, $\Delta$ is a non-square in $\GF(q)$. Thus, for any such an element we get $(q-1)(q+1)$ triples $(r_1,r_2,a)$.
 
By taking into account all the above results, for $x_2=0$, the following number of triples is obtained:
\begin{equation*}
1+(q-2)(q+1)+\frac{q-3}{2}(2+(q-3)(q+1))+\frac{q-1}{2}(q-1)(q+1)=q^3-2q^2+q+1.
\end{equation*}
In order to conclude the study of the case $k=5$, consider $x_2\neq0$. Therefore, by deriving $r_2$ from $r_1x_1^q+r_2x_2^q=a+ d$, where $d\in \GF(q)\setminus \{b\}$, and plugging it into $r_1^{q+1}+r_2^{q+1}=2a$, we get 
\[
r_1^{q+1}2 b-(a+d)\Tr(r_1 x_1^q)+(a+d)^2-2ax_2^{q+1}=0, 
\]
or
\begin{equation}\label{eq_3}
(r_1,1)\begin{pmatrix}
2 b & -(a+d)x_1^q\\
-(a+d)x_1 & (a+d)^2-2ax_2^{q+1}
\end{pmatrix}
\begin{pmatrix}
r_1^q\\
1
\end{pmatrix}=0,
\end{equation}
whose determinant is $((a+d)^2-4 a b)x_2^{q+1}$. If $(a+d)^2\neq4 a b$, the above Hermitian matrix is a non-singular matrix which defines a unitary form of $V(2,q^2)$ not admitting the point $\langle (1,0) \rangle$ as a totally isotropic point. Therefore, if $(a+d)^2-4 a b=0$, there is a unique $r_1$ satisfying \eqref{eq_3}, otherwise there are $q+1$ values for $r_1$ satisfying (\ref{eq_3}). It is evident that the $a\in\GF(q)^*$ making $(a+d)^2-4 a b$ equal to zero are the solutions of the equation  $X^2+2(d-2b)X+d^2=0$, whose discriminant is $\Delta=4bc\neq0$.  By arguing exactly as in the case $x_2=0$, we find that the number of triples obtained is again $q^3-2q^2+q+1$, and so we may denote it by $p^5_{5 5}$.

Finally, we get $p^5_{5 4}=\eta_5-(p^5_{5 0}+p^5_{5 1}+p^5_{5 2}+p^5_{5 3}+p^5_{5 5})=q^4-3q^3+q^2+2q-1$.
\end{proof}

\begin{lemma}\label{lem_6}
The intersection numbers $p_{4j}^k$ are well defined. They are collected in the following intersection matrix $L_4$ whose $(k,j)-$entry is $p_{4j}^k$
{
\[
L_4=\scriptsize\begin{pmatrix}
0 & 0 & 0 & 0        & (q^3-q)(q-1)^2          & 0 \\
0 & q(q-1)^2 & 0   & q(q-1)^3       & q^2(q-1)^2(q-2)   &q(q-1)^3 \\
0 & 0 & 0 & 0 &  (q^3-q)(q-1)(q-2)        & (q^3-q)(q-1)  \\
0 & q(q-1)^2 & 0 & q(q-1)^3 & q(q-1)(q^3-3q^2+2q+1)    & q^2(q-1)(q-2)\\
1 & q^3-q^2-2q & q-2 &  q^4-2q^3-q^2+3q+1 & q^5-4q^4+4q^3+3q^2-7q+1   & q^4-3q^3+q^2+4q-1 \\
0 & (q+1)(q-1)^2 & q-1 & q(q^2-1)(q-2)  & q^5-4q^4+4q^3+3q^2-5q+1 & q^4-3q^3+q^2+2q-1 
 \end{pmatrix}
\]}
\end{lemma}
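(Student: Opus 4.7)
The plan is to leverage the symmetry of $\mathfrak X_P$ together with the intersection matrices already established in Lemmas \ref{lem_2}--\ref{lem_5}. Since each relation $R_i$ is symmetric, we have $p_{ij}^k = p_{ji}^k$ for every admissible triple of indices. Consequently, five of the six columns of $L_4$ are forced without any new computation: the column $j=0$ is determined by the defining property of the identity relation (giving $p_{40}^k = \delta_{k,4}$), while the columns indexed by $j \in \{1,2,3,5\}$ coincide, respectively, with the $j=4$ columns of $L_1, L_2, L_3, L_5$. An inspection matches these previously tabulated values with the entries displayed in the statement.

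The only genuinely new computation is the diagonal column $p_{44}^k$, which I would handle by the standard splitting argument. Fix any pair $(X,Q) \in R_k$. Every point $R \in \cX$ with $(Q,R) \in R_4$ satisfies $(R,X) \in R_j$ for exactly one $j \in \{0,1,\dots,5\}$, so
$$\sum_{j=0}^{5} p_{4j}^k \;=\; \eta_4 \;=\; q(q^2-1)(q-1)^2,$$
using the valency computed in Lemma \ref{lemma_1}. This both establishes that $p_{44}^k$ is well defined (since the other $p_{4j}^k$ already are) and yields the formula
$$p_{44}^k \;=\; \eta_4 \;-\; \sum_{j \neq 4} p_{4j}^k.$$

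The sole remaining obstacle is the routine algebraic verification that, for each $k = 0, 1, \dots, 5$, the polynomial in $q$ obtained from this subtraction agrees with the $(k,4)$-entry of $L_4$ as stated. No fresh geometric or algebraic input is needed beyond the symmetry of the scheme, the valency formula of Lemma \ref{lemma_1}, and the four previously tabulated intersection matrices, so the proof reduces to bookkeeping.
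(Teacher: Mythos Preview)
Your approach is correct and essentially identical to the paper's own proof: it too observes that the previous lemmas supply every entry of $L_4$ except $p_{44}^k$ via the symmetry $p_{4j}^k=p_{j4}^k$, and then obtains $p_{44}^k$ from the valency identity $\eta_4=\sum_j p_{4j}^k$.
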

\begin{proof}
To check that  $p_{4j}^k$ is well defined, for any pair $(X,Q)\in R_k$ we would have to count the number of points $R$ that are $4-$related  with $Q$ and  $j-$related with $X$. However, thanks to the previous Lemmas \ref{lem_2}, \ref{lem_3}, \ref{lem_4}, \ref{lem_5}, we already have all the entries of $L_4$ except for $p_{4 4}^k$, with $k=1,\ldots,5$. These can be derived from the well-known relations $\eta_i=\sum_{j}p_{i j}^k$, i.e., $p^k_{4 4}=\eta_4-(p^k_{4 0}+p^k_{4 1}+p^k_{4 2}+p^k_{4 3}+p^k_{4 5})$, for $k=1,\ldots,5$.
\end{proof}

Set $I=\{0,2\}$. For $i,j\in I$ and $k\in \{0,\ldots,5\}\setminus I$, we have $p^{k}_{i,j}=0$. By Theorem 9.3 (iii) in \cite{bi}, the scheme $\mathfrak X_P$ is imprimitive. 

Summarising, we have the following result.
\begin{theorem}\label{prop_1}
 For any  point $P$ of $H(3,q^2)$, $\mathfrak X_P=(\cX,\cR)$ is a symmetric and  imprimitive association scheme, whose first and second eigenmatrices are  
{\small
\[
\cP=\begin{pmatrix}
1 & (q^2-1)(q+1) & q-1 & (q^2-1)^2  & (q^3-q)(q-1)^2 & (q^3-q)(q-1)\\
1 & q^2-q-1      & q-1 & q^3-2q^2+1 & -q(q-1)^2        & -q(q-1) \\
1 & q^2-1        & -1  & -q^2+1     & 0                & 0 \\
1 & -q-1         & q-1 & -q^2+1     & q(q-1)           & q\\
1 & -q-1         & -1  & q+1        & -q(q+1)          & q(q+1)  \\
1 & -q-1         & -1  &  q+1       & q(q-1)           & -q(q-1) \\
 \end{pmatrix}
\]}
\vspace{.1in}
{\footnotesize
\[
\cQ=\begin{pmatrix}
1 & (q-1)(q+1) ^2 & q^3(q-1) & q(q-1)^2(q+1)            &  \frac12 q^2(q-1)^3    & \frac12q^2 (q+1)(q-1)^2                       \\

1 & q^2-q-1 & \frac{q^3(q-1)}{q+1}   & -q(q-1) & -\frac{q^2(q-1)^2}{2(q+1)}   & -\frac12q^2(q-1)            \\

1 &  (q-1)(q+1)^2 & -q^3 & q(q-1)^2(q+1) & -\frac12q^2(q-1)^2         & -\frac12q^2(q+1)(q-1)             \\

1 & q^2-q-1 & -\frac{q^3}{q+1}   & -q(q-1) & \frac{q^2(q-1)}{2(q+1)}      & \frac{q^2}{2} \\

1 &-q-1 & 0 &  q         &  -\frac{q^2}{2}              & \frac{q^2}{2}  \\

1 & -q-1 & 0   & q            &\frac12q^2(q-1)           & -\frac12q^2(q-1)         
 \end{pmatrix}
\]
}
\end{theorem}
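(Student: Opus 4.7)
The plan is to verify the three association scheme axioms, apply the standard criterion for imprimitivity, and then extract the eigenmatrices from the intersection matrices collected above.

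First I would check the axioms. Axiom (1) holds by the definition of $R_0$. For symmetry of each $R_i$, note that swapping the last two arguments of $z$ gives $z(P,R,Q)=z(P,Q,R)^q$, and in the cyclic group $T=\GF(q^2)^*/\GF(q)^*$ the map $x\mapsto x^q$ coincides with inversion; since the subsets $\{0\}$, $\{t\}$, $\Gamma$ and $\{e\}$ are each closed under inversion (the involution $t$ and the identity $e$ are self-inverse, and $\Gamma$ is the complement of $\{e,t\}$ in $T$), every relation $R_i$ is symmetric. That $\cR$ partitions $\cX\times\cX$ follows by a disjoint case analysis on whether $P,Q,R$ are collinear, span a hyperbolic line, or span a plane (using Lemma~\ref{lem_7} for the degenerate-plane case). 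Axiom (3) is precisely the content of Lemmas~\ref{lemma_1}--\ref{lem_6}, which exploit the transitivity of $\PGU(4,q^2)$ on non-collinear pairs of points of $H(3,q^2)$ to reduce each parameter computation to a single choice of $(P,Q)$.

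Next, for imprimitivity, I would use the observation immediately preceding the statement: taking $I=\{0,2\}$, inspection of $L_1,L_3,L_4,L_5$ yields $p^{k}_{ij}=0$ for all $i,j\in I$ and $k\notin I$, so by \cite[Theorem~9.3 (iii)]{bi} the scheme is imprimitive. Geometrically, the non-trivial equivalence classes are the $q$-point sets $\ell\cap\cX$ as $\ell$ ranges over the hyperbolic lines of $\PG(3,q^2)$ through $P$.

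To obtain $\cP$, I would use that $L_0=I,L_1,\ldots,L_5$ commute and form a faithful $6$-dimensional representation of the Bose--Mesner algebra, hence are simultaneously diagonalisable. I would pick one matrix with six pairwise distinct eigenvalues (a rational combination of the $L_j$ suffices if no single one does), diagonalise it, and then read off the eigenvalue of each $L_j$ on each common eigenspace to populate the entries $P_{ij}$. The all-ones vector is automatically a common eigenvector and recovers the first row, matching the valencies of Lemma~\ref{lemma_1}. Finally, $\cQ$ follows from the orthogonality $\cP\cQ=|\cX|\,I=q^{5}I$, i.e.\ $\cQ=q^{5}\cP^{-1}$; as a cross-check one may verify the entries against the relation $k_j Q_{ji}=m_i P_{ij}$ with $m_i=Q_{0i}$ and $k_j=P_{0j}$.

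The main obstacle is carrying out the explicit closed-form diagonalisation of a $6\times 6$ matrix whose entries are polynomials in $q$. This is substantially alleviated by the imprimitivity: passing to the quotient scheme modulo $R_0\cup R_2$ supplies four of the six joint eigenspaces in closed form (those spanned by vectors constant on each hyperbolic-line fibre), leaving a two-dimensional ``fibrewise mean-zero'' complement whose eigenstructure can be computed directly and then assembled with the quotient data to produce the rows of $\cP$ and hence, via inversion, the matrix $\cQ$.
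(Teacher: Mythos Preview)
Your proposal is correct and follows the paper's own route: the theorem is presented there as a summary of the preceding lemmas, with symmetry noted directly, the intersection numbers supplied by Lemmas~\ref{lemma_1}--\ref{lem_6}, and imprimitivity deduced from $p^k_{ij}=0$ for $i,j\in\{0,2\}$, $k\notin\{0,2\}$ via \cite[Theorem~9.3(iii)]{bi}; the eigenmatrices are simply stated, and your description of extracting them by simultaneously diagonalising the $L_j$ and then inverting is the standard procedure the paper leaves implicit.

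One small slip: the quotient modulo $R_0\cup R_2$ is a two-class scheme (the strongly regular graph of Proposition~\ref{prop_6}), so it contributes \emph{three} of the six common eigenspaces (those with $A_2$-eigenvalue $q-1$, namely $V_0,V_1,V_3$), not four; the remaining three ($V_2,V_4,V_5$, with $A_2$-eigenvalue $-1$) live in the fibrewise mean-zero complement. This does not affect the validity of your plan.
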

\begin{remark}
We point out that $\mathfrak X_P=(\cX,\cR)$ is a fusion of a Schurian scheme. Let $G$ be the collineation group of $H(3,q^2)$ and let $G_P$ be the stabiliser of the point $P$. Then, it is not difficult to see that the action of $G_P$ on the set $\cX$ of points not collinear with $P$ is generously transitive. Hence, the permutation group $G_P^{\cX}$ induced by $G_P$ acting on $\cX$ yields a Schurian scheme. In fact, this association scheme has $4+(q-1)/2$ classes, since the relation $R_4$ splits up under the action of $G_P^{\cX}$. Hence, an alternative proof of Theorem \ref{prop_1} could be given that would rely on understanding the irreducible constituents of the permutation character $G_P^{\cX}$ and computation of the intersections of double cosets $HgH\cap k^{G_P}$, where $H$ is a point stabiliser in $G_P^{\cX}$ and $k^{G_P^{\cX}}$ is a conjugacy class of elements of $G_P^{\cX}$.
\end{remark}

\subsection*{A quotient scheme}

We now explore the quotient scheme, say $\mathfrak{B}$, arising from the imprimitivity of $\mathfrak X_P$.  
 Since $R_0\cup R_2$ is an equivalence relation on $\cX$, the vertices of $\mathfrak{B}$ are  the hyperbolic lines through $P$; this set will be denoted by $\Sigma$. Let $\sim$ be the equivalence relation on $\{0,\ldots,5\}$ defined by 
 \[
 i\sim j \mathrm {\ \ \ \  \ if\ and\ only\ if\  \ \ \ \ } p^j_{i\alpha}\neq 0,  \mathrm {\ \ \ \ \ \ \ \ \ for\ some\ } \alpha\in I.
 \] 
 
 The equivalence classes are $I=\{0,2\}$, $1'=\{1,3\}$, $2'=\{4,5\}$. This yields that the non-trivial relations of $\mathfrak{B}$ are
 \[
 \begin{array}{lcl}
 R_{1'} & = & \{([x],[y])\in \Sigma\times \Sigma: (x,y)\in R_1\cup R_3\},\\[.05in]
 R_{2'} & = & \{([x],[y])\in \Sigma\times \Sigma: (x,y)\in R_4\cup R_5\},
 \end{array}
 \] i.e., $(\Sigma, R_{1'})$ is a strongly regular graph (and the same holds for its complementary graph $(\Sigma, R_{2'})$).
 
Since $\sim$ is an equivalence relation, the set $R_1(x)\cup R_3(x)$ is partitioned into hyperbolic lines, for any $x\in \cX$. Then, the valency of the graph is $k=(\eta_1+\eta_3)/q=(q^2-1)(q+1)$. Similarly,  the set 
\[
\underset{a,b=1,3}\bigcup\cP^{(x,y)}_{a,b},
\]
where  $\cP^{(x,y)}_{a,b}=\{z\in\cX:(x,z)\in R_a, (y,z)\in R_b\}$, for $(x,y)\in R_i$, $i\notin I$,  is partitioned into hyperbolic lines. This implies that the other parameters of $(\Sigma, R_{1'})$ are
\[
\lambda=\frac{p^1_{11}+2p^1_{13}+p^1_{33}}q=\frac{p^3_{11}+2p^3_{13}+p^3_{33}}q=2q^2-q-2
\]
 and 
 \[
 \mu=\frac{p^4_{11}+2p^4_{13}+p^4_{33}}q=\frac{p^5_{11}+2p^5_{13}+p^5_{33}}q=q(q+1).
 \]
Let $\Bil_{2}(q)$ be the graph defined on the set of bilinear forms from $V(2,q)\times V(2,q)$ to $\GF(q)$,  with two forms $f$ and $g$  being adjacent if and only if $\rank(f-g)=1$. By \cite[Proposition 2.6]{cs}, $\Bil_{2}(q)$ is isomorphic to the matrix algebra $\cD_{2}(q^2)$ consisting of all $2\times 2$ Dickson matrices over $\GF(q^2)$, where  a $2\times 2$ Dickson  matrix over $\GF(q^2)$ has the form
 \[
D_{(a,b)}= \left(\begin{array}{cccc}
a     & b \\
b^q & a^q
\end{array}\right),
 \]
with $a,b\in\GF(q^2)$.

\begin{proposition}\label{prop_6}
The graph $(\Sigma, R_{1'})$ is isomorphic to $\Bil_{2}(q)$.
\end{proposition}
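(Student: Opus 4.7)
The plan is to exhibit an explicit bijection from $\Sigma$ to $\cD_2(q^2)$ that preserves adjacency, and then invoke \cite[Proposition 2.6]{cs} to identify the target with $\Bil_2(q)$. Fix the reference point $P=\langle(0,0,0,1)\rangle$. First I will coordinatise $\Sigma$: each hyperbolic line through $P$ has the form $\langle P, Q\rangle$ for some point $Q=\langle(1,a,b,c)\rangle$ of $H(3,q^2)\cap\cX$, and two such points determine the same line if and only if they agree in $(a,b)$ (the $c$-coordinate can absorb any scalar multiple of $P$). This gives a bijection $\Sigma\leftrightarrow\GF(q^2)^2$ sending $\langle P,Q\rangle$ to $(a,b)$, in particular confirming $|\Sigma|=q^4$.

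Next I translate the $R_{1'}$-adjacency into these coordinates. For $\ell_i=\langle P,Q_i\rangle$ with $Q_i=\langle(1,a_i,b_i,c_i)\rangle$, the pair $(\ell_1,\ell_2)$ lies in $R_{1'}$ if and only if $(Q_1,Q_2)\in R_1\cup R_3$: the first case is $h(Q_1,Q_2)=0$, and by Lemma \ref{lem_7} the second case says that the plane $\langle P,Q_1,Q_2\rangle$ is degenerate. Both possibilities are captured by the vanishing of the Gram determinant of that plane. Using $h(P,Q_i)=1$, $h(P,P)=0$, and $h(Q_i,Q_i)=0$, the Gram matrix in the basis $(P,Q_1,Q_2)$ collapses to
\[
\begin{pmatrix} 0 & 1 & 1 \\ 1 & 0 & h(Q_1,Q_2) \\ 1 & h(Q_1,Q_2)^q & 0\end{pmatrix},
\]
whose determinant is $\Tr(h(Q_1,Q_2))$. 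Expanding $h(Q_1,Q_2)=c_1+c_2^q-a_1b_1^q-a_2b_2^q$ together with $\Tr(c_i)=a_i^{q+1}+b_i^{q+1}$ (the isotropy of $Q_i$) reduces this to
\[
\Tr(h(Q_1,Q_2)) \;=\; (a_1-a_2)^{q+1}+(b_1-b_2)^{q+1},
\]
so $(\ell_1,\ell_2)\in R_{1'}$ if and only if this norm-form expression vanishes.

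Finally I compare with $\cD_2(q^2)$, where $D_{(x,y)}$ and $D_{(x',y')}$ are adjacent iff $\det D_{(x-x',y-y')}=(x-x')^{q+1}-(y-y')^{q+1}=0$ with $(x,y)\ne(x',y')$. Since $q$ is odd, the norm $\GF(q^2)^*\to\GF(q)^*$ is surjective, so I may fix $\lambda\in\GF(q^2)^*$ with $\lambda^{q+1}=-1$. The map
\[
\psi\colon\Sigma\longrightarrow\cD_2(q^2),\qquad (a,b)\longmapsto D_{(a,\lambda b)},
\]
is a bijection, and the identity
\[
(a_1-a_2)^{q+1}-(\lambda b_1-\lambda b_2)^{q+1} \;=\; (a_1-a_2)^{q+1}+(b_1-b_2)^{q+1}
\]
matches $\psi$-adjacency with $R_{1'}$-adjacency, so $\psi$ is an isomorphism of graphs. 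The main obstacle is the Gram-determinant reduction in the middle paragraph, where the isotropy hypotheses convert an a priori messy quantity into a clean sum of two $(q+1)$st powers; once that is in hand, the twist by $\lambda$ and the final comparison with $\cD_2(q^2)\cong\Bil_2(q)$ are routine.
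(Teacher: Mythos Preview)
Your proof is correct and follows essentially the same route as the paper: both use the bijection $\ell_{(a,b)}\mapsto D_{(a,\lambda b)}$ with $\lambda^{q+1}=-1$, reduce the $R_{1'}$-adjacency to $\Tr(h(Q_1,Q_2))=0$, and verify this equals $(a_1-a_2)^{q+1}+(b_1-b_2)^{q+1}$. Your Gram-determinant argument is a clean way to obtain that trace criterion (the paper just writes ``straightforward calculations show''); note, however, a typo in your displayed expression for $h(Q_1,Q_2)$, which should read $c_1+c_2^q-a_1a_2^q-b_1b_2^q$ rather than $c_1+c_2^q-a_1b_1^q-a_2b_2^q$ --- your final trace formula is unaffected.
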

\begin{proof}
Any point $\langle (1,r_1,r_2,r_3) \rangle$ with $r_3+r_3^q=r_1^{q+1}+r_2^{q+1}$, together with $P=\langle (0,0,0,1) \rangle$,  spans a hyperbolic line of $H(3,q^2)$, denoted by $l_{(r_1,r_2)}$. Fix $\delta\in\GF(q^2)$ with $\N(\delta)=-1$, and define the  bijection
\[
\begin{array}{rccc}
\varphi:& \Sigma & \rightarrow &  \Bil_{2}(\GF(q))\\
 & l_{(r_1,r_2)} & \mapsto  & D_{(r_1, \delta r_2)}
\end{array}.
\]
Let $m=l_{(m_1,m_2)},n=l_{(r_1,r_2)}\in\Sigma$. For any $Q\in m$ and $R\in n$, 
\mbox{$z(P,Q,R)=h(\q,\r)\GF(q)^*$}. Straightforward calculations show that $(m,n)\in R_{1'}$ if and only if \mbox{$\Tr(h(\q,\r))=0$}, i.e., $\det (\varphi(m)-\varphi(n))=0$. 
\end{proof}

\section{Pseudo-ovals as subsets in the  scheme $\mathfrak X_P$ on $Q^-(5,q)$}\label{sec_4}

Via the Klein correspondence $\kappa$, the lines of $\PG(3,q^2)$ are mapped to the points of a hyperbolic quadric  $Q^+(5,q^2)$  of $\PG(5,q^2)$.  In particular, the lines of a unitary polar geometry of rank 2 of $\PG(3,q^2)$  are mapped to the points of an elliptic quadric   $Q^-(5,q)$ of a $\PG(5,q)$   embedded in $\PG(5,q^2)$. The reader is referred to  \cite{hir} for more details on the  Klein correspondence.

Let 
\[
\begin{array}{rccc}
\tau:& V(6,q^2) & \longrightarrow  & V(6,q^2) \\[.1in]
& (X_1,X_2,X_3,X_4,X_5,X_6)& \mapsto  & (X_1,-\mu^{q}X_2,X_3,-\mu^{q}X_4,X_5,\mu^{q}X_6),
\end{array}
\] 
for a fixed $\mu\in\GF(q^2)$ with $\N(\mu)=-1$, and set $\rho=\tau \circ \kappa$. It turns out that  the  lines of $H(3,q^2)$ defined by \eqref{eq_1} are mapped by $\rho$ to the points of the $Q^-(5,q)$ defined by  $\widehat Q$ in \eqref{eq_18}.
For any point $P\in H(3,q^2)$, by abuse of notation, we write $\rho(P)$ to denote the  totally singular line 
$
\{\rho(r):r\mathrm{\ is\ a\ totally\ isotropic\ line\ on\ }P\}
$.
 
 \begin{proposition}\label{prop_3}
Let $P$, $Q$ and $R$ be three distinct points of the $H(3,q^2)$ defined by \eqref{eq_1}. Then, 
\mbox{$z(P,Q,R)=e$} if and only if $l=\rho(P)$, $m=\rho(Q)$, $n=\rho(R)$ 
are in perspective.
\end{proposition}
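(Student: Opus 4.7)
The plan is to normalise coordinates via the action of $\PGU(4,q^2)$ and then match the algebraic conditions on both sides of the claim.

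Since $\PGU(4,q^2)$ acts transitively on pairs of non-collinear points of $H(3,q^2)$ \cite[Cor.\ 11.12]{grove}, and both sides of the equivalence are invariant under this action (up to the corresponding element of $\mathrm{P}\Omega^-(6,q)$ acting on $\PG(\widehat V)$), I may assume $P=\langle(0,0,0,1)\rangle$ and $Q=\langle(1,0,0,0)\rangle$; any third point $R$ non-collinear with both has the form $R=\langle(1,r_1,r_2,r_3)\rangle$ with $\Tr(r_3)=r_1^{q+1}+r_2^{q+1}$ and $r_3\neq0$. (If some pair among $P,Q,R$ is collinear, then $z(P,Q,R)=0\neq e$ and the three Klein-image lines meet pairwise, so neither side holds.) Substituting into \eqref{eq_1} gives $h(\p,\q)=1$, $h(\r,\p)=1$ and $h(\q,\r)=r_3^q$, so
\[
z(P,Q,R)=r_3^q\,\GF(q)^*,
\]
which equals $e$ if and only if $r_3\in\GF(q)^*$.

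Second, I would compute $\rho(P)$, $\rho(Q)$, $\rho(R)$ explicitly. The pencil of totally isotropic lines through $P$ consists of the lines $\langle P,(0,a_1,a_2,0)\rangle$ with $a_1^{q+1}+a_2^{q+1}=0$; taking their $2\times2$ Plücker minors and applying the twist $\tau$ identifies this pencil with a totally singular line of $\widehat Q$, and an analogous computation is done for $Q$. Since $\mathrm{P}\Omega^-(6,q)$ is transitive on flags of $Q^-(5,q)$, after a further coordinate change I may assume $\rho(P)=L(I,0,0)$ and $\rho(Q)=L(0,0,I)$; this places the triple in exactly the normal form required by Proposition \ref{prop_2}, so the perspective condition becomes $f_0^q f_2+g_0 g_2^q\in\GF(q)$, where $\rho(R)=L(F_0,F_1,F_2)$ with $F_i(x)=f_ix+g_ix^q$.

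The main step is to read off $F_0$ and $F_2$ from $R=\langle(1,r_1,r_2,r_3)\rangle$. Parameterising the $q+1$ totally isotropic lines through $R$ by the Baer subline of isotropic points in $R^\perp/R$, computing their Plücker coordinates, applying $\tau$, and then normalising a basis for the resulting $2$-dimensional subspace of $\widehat V$ into the $(x,x^q)$-form yields explicit expressions for $f_0,g_0,f_2,g_2$ as functions of $r_1,r_2,r_3$. A direct simplification, using $\Tr(r_3)=r_1^{q+1}+r_2^{q+1}$ together with the identities \eqref{eq_5} (which $\rho(R)$ satisfies automatically, since it is a totally singular line of $Q^-(5,q)$), then shows that $f_0^q f_2+g_0 g_2^q\in\GF(q)$ if and only if $r_3\in\GF(q)$. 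Combined with the first step, this gives the equivalence.

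The chief obstacle is the Plücker/twist bookkeeping in the third step: one must carry the Klein correspondence through the $\tau$-twist (with $\mu$ satisfying $\N(\mu)=-1$) so that the image of a pencil of totally isotropic lines lies in the elliptic quadric $Q^-(5,q)$ of $\PG(\widehat V)$ rather than the ambient Klein quadric $Q^+(5,q^2)$, and then extract $f_0,g_0,f_2,g_2$ correctly from a non-canonical basis of $\rho(R)$. The sign and proportion conventions (both the right factor of proportion in $L(F_0,F_1,F_2)$ and the action of $\tau$) must be tracked carefully so that the final reduction to $r_3\in\GF(q)$ falls out cleanly.
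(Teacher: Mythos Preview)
Your overall strategy matches the paper's: normalise coordinates, compute the Klein/$\tau$-image, and invoke Proposition~\ref{prop_2}. But step~2 contains a genuine gap. Once you have fixed $P=\langle(0,0,0,1)\rangle$ and $Q=\langle(1,0,0,0)\rangle$ in $\PG(3,q^2)$, the map $\rho=\tau\circ\kappa$ is completely determined, so $\rho(P)$ and $\rho(Q)$ are specific lines of $Q^-(5,q)$. Invoking a ``further coordinate change'' in $\PG(\widehat V)$ via $\mathrm{P}\Omega^-(6,q)$ either replaces $\rho$ by a different map (so it is no longer the $\rho$ under discussion), or---equivalently, via the equivariance of the Klein correspondence---pre-composes with an element of $\PGU(4,q^2)$ that moves $P,Q,R$ away from the positions you just chose, destroying the identification $z(P,Q,R)=r_3^q\GF(q)^*$ from step~1. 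Either way, the link between $r_3$ and the coefficients $f_i,g_i$ of $\rho(R)$ is severed exactly where you need it. (Incidentally, two disjoint lines are not a flag; you mean transitivity on pairs of disjoint totally singular lines.)

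The paper resolves this simply by doing the computation you gesture at and observing that no second normalisation is required: with the Hermitian form \eqref{eq_1} and the quadratic form \eqref{eq_18} already aligned, the pencil of totally isotropic lines on $\langle(1,0,0,0)\rangle$ maps under $\rho$ \emph{directly} to $L(I,0,0)$, and the pencil on $\langle(0,0,0,1)\rangle$ to $L(0,0,I)$. One then finds $\rho(R)=L(I,F_1,F_2)$ with $F_0=I$, so Proposition~\ref{prop_2} reduces to checking $f_2\in\GF(q)$, which drops out immediately from the Pl\"ucker coordinates. The paper also splits the two implications, normalising on the $H(3,q^2)$ side for one direction and on the $Q^-(5,q)$ side (computing $\rho^{-1}$) for the other; your unified treatment would work as well once the spurious second coordinate change is deleted and the explicit computation of $\rho(P),\rho(Q)$ is actually carried out.
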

\begin{proof}
Let $P$, $Q$ and $R$ such that $z(P,Q,R)=e$. 
Since $\langle P,Q,R \rangle$ is a non-degenerate plane, coordinates can be chosen such that  $P=\langle (1,0,0,0) \rangle$, $Q=\langle (0,0,0,1) \rangle$ and $R=\langle (1,t_1,t_2,t_3) \rangle$, with $t_3=\frac12(t_1^{q+1}+t_2^{q+1})\neq0$. 

The totally isotropic lines on $P$ have the form $\langle (1,0,0,0),(0,x,\mu x^q,0) \rangle$, for some non-zero $x\in\GF(q^2)$. Thus, $\rho(P)=L(I,0,0)$. Similarly, $\rho(Q)=L(0,0,I)$.

The totally isotropic lines on $R$ have the form $\langle (1,t_1,t_2,t_3),(0,x,\mu x^q,t_1^qx+t_2^q\mu x^q) \rangle$, for some non-zero $x\in\GF(q^2)$. Thus, $\rho(R)=L(I,F_1,F_2)$, with $F_1(x)=t_1^qx+t_2\mu x^q$ and $F_2(x)=\frac12(t_1^{q+1}-t_2^{q+1})x+t_1t_2^q\mu x^q$.
By using Proposition \ref{prop_2}, it is immediate that $l$, $m$ and $n$ 
are in perspective.

 Let $l$, $m$, $n$  be three lines of the $Q^-(5,q)$ arising from $\widehat Q$ which  are in perspective. Set  $P=\rho^{-1}(l)$, $Q=\rho^{-1}(m)$, $R=\rho^{-1}(n)$. We proceed to show that $z(P,Q,R)=e$.
 
 Coordinates in $\widehat V$ can be chosen such that
 \[
l=L(I,0,0), \ \ \ m=L(0,0,I), \ \ \  n=L(I,F_1,F_2),
\]
with $F_1,F_2$ non-singular. Let $\overline n$ be the extension of $n$ in $V(6,q^2)$, that is
\begin{equation*}\label{eq_9}
\overline n=\{(x,y, f_1x+g_1y,g_1^qx+f_1^qy,f_2x+g_2y,g_2^qx+f_2^qy):x,y\in\GF(q^2)\}.
\end{equation*}
Note that $P=\rho^{-1}(l)=\langle (1,0,0,0) \rangle$, $Q=\rho^{-1}(m)=\langle (0,0,0,1) \rangle$. Furthermore, we find that $\rho^{-1}(\overline n)$ actually consists of $q^2+1$ coplanar lines of $\PG(3,q^2)$ through $R$, $q+1$ of them are totally isotropic. Since $m$ and $n$ are not concurrent, then we may write $R=\langle (1,t_1,t_2,t_3 \rangle$, with $t_3^q+t_3=t_1^{q+1}+t_2^{q+1}$. Straightforward calculations show that these lines have the form $\langle (1,t_1,t_2,t_3),(0,x_1,x_2,t_1^qx_1+t_2^q x_2) \rangle$, for some $x_1,x_2\in\GF(q^2)$. For $x_1=1$ and $x_2=0$, the $\rho-$image of the corresponding line is $\langle (1,0,t_1^q,\mu^qt_2, t_1^{q+1}-t_3,\mu^q t_1^qt_2) \rangle\in\overline n$. On the other hand, the unique point of $\overline n$ of this form is $\langle (1,0,f_1,g_1^q,f_2,g_2^q) \rangle$, whence 
\[
\left\{
\begin{array}{lcl}
t_1^q & = & f_1 \\
\mu ^q t_2 & = &  g_1^q\\
t_1^{q+1}-t_3 & = & f_2\\
\mu^q t_1^qt_2 & = & g_2^q\\
t_3^q+t_3 & = & t_1^{q+1}+t_2^{q+1}
\end{array}
\right..
\]
Eqs. \eqref{eq_5} with $F_0=I$ make the system compatible, and so we get the unique solution $t_1 = f_1^q$, 
 $t_2=-\mu  g_1^q$ and $t_3=f_1^{q+1}-f_2$.
 By Proposition \ref{prop_2}, $f_2\in\GF(q)$ providing $t_3\in\GF(q)$. Then, $z(P,Q,R)=t_3\GF(q)^*=e$, which is the desired conclusion. 
 \end{proof}
Proposition \ref{prop_3}  allows us to view the association scheme $\mathfrak X_P$ in
the dual setting. Fix a line $l$ of $Q^-(5,q)$ and consider the set $\cX'$ of all lines of $Q^-(5,q)$ that are disjoint from $l^\perp$, that is, from $l$.
There are $q^5$ such lines and we equip this set with the following five non-trivial relations:
\begin{enumerate}
\item[] \mbox{$R'_1=\{(m,n): m\ \mathrm{and\ } n   \mathrm{\ are \ concurrent}\}$},
\item[] \mbox{$R'_2=\{(m,n):\dim\,\langle l, m, n \rangle=4\}$},
\item[] \mbox{$R'_3=\{(m,n):m\ \mathrm{and\ } n \mathrm{\ are\ disjoint\ and\ }\dim\,\langle l, m, n \rangle=5\}$},
\item[] \mbox{$R'_4=\{(m,n):l,m,n$  span the whole space and they are  not  in  perspective$\}$},
\item[] \mbox{$R'_5=\{(m,n):l,m,n$  span the whole space and they are  in   perspective$\}$}.
\end{enumerate}

By using the well-known correspondences given by the Klein map from $H(3,q^2)$ to $Q^-(5,q)$, we get that $\{(\rho(P),\rho(Q)): (P,Q)\in R_i\}=R'_i$, for $i=1,2,3$. Proposition \ref{prop_3} provides the  equivalence between $R_5$ and $R_5'$, hence the one between $R_4$ and $R_4'$.  The transitivity of the unitary group on the points of $H(3,q^2)$, as well as the transitivity of the orthogonal group on the lines of $Q^-(5,q)$,  leads to the following result.
\begin{theorem}
Set $\cR'=\{R_0',R_1',\ldots,R_5'\}$, where $R_0'$ is the diagonal relation. $\mathfrak X_l=(\cX',\cR')$ is a symmetric, imprimitive association scheme, isomorphic to \mbox{$\mathfrak X_P=(\cX,\cR)$}, for any line $l$ of $Q^-(5,q)$ and any point $P$ of $H(3,q^2)$.
\end{theorem}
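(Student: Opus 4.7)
The plan is to transport the association-scheme structure from $\mathfrak X_P$ across the bijection induced by $\rho$, and then invoke transitivity of the classical groups to eliminate the dependence on $P$ and $l$. Given $P\in H(3,q^2)$, set $l=\rho(P)$. Since two points of $H(3,q^2)$ are collinear exactly when they span a totally isotropic line, whose $\kappa$-image is a common point of the two $\rho$-image lines of $Q^-(5,q)$, the restriction of $\rho$ carries $\cX$ bijectively onto $\cX'$. Once one verifies the equalities $\{(\rho(Q),\rho(R)):(Q,R)\in R_i\}=R'_i$ for each $i\in\{1,\ldots,5\}$, the intersection numbers, the symmetry, and the imprimitivity of $\mathfrak X_l$ are all pulled back from $\mathfrak X_P$ via Theorem \ref{prop_1}, and $\rho$ becomes an isomorphism of association schemes.

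For the relation-by-relation check, the cases $i=1,2,3$ are standard translations of the Klein correspondence: $R_1$ records ``collinear in $H(3,q^2)$'', equivalent to concurrency of the image lines; $R_2$ records ``$P,Q,R$ on a hyperbolic line of $\PG(3,q^2)$'', which translates under $\kappa$ to $l,m,n$ being contained in a single $4$-space of $\PG(5,q^2)$, i.e.\ $R'_2$; the skew, full-span configuration $R'_3$ then matches the residual case in $\cX$. The case $i=5$ is exactly the content of Proposition \ref{prop_3}, which identifies $z(P,Q,R)=e$ with the perspective configuration of $l,m,n$. The case $i=4$ follows by elimination, since both $\{R_0,\ldots,R_5\}$ and $\{R'_0,\ldots,R'_5\}$ are exhaustive, pairwise disjoint partitions of $\cX\times\cX$ and $\cX'\times\cX'$, respectively.

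Independence of the scheme from the choice of $P$ or $l$ is then obtained from transitivity: $\PGU(4,q^2)$ acts transitively on the points of $H(3,q^2)$, and each collineation of $H(3,q^2)$ preserves the Shult function $z$, hence every $R_i$; so $\mathfrak X_P\cong\mathfrak X_{P'}$ for any two choices of $P$. Analogously, the orthogonal group of $Q^-(5,q)$ is transitive on its lines and preserves concurrency, spans, and the perspective property, so $\mathfrak X_l\cong\mathfrak X_{l'}$ for any two lines. The main technical obstacle I would expect is the careful verification that $R_2$ and $R_3$ map correctly under $\rho$: this hinges on showing that the $q+1$ points of $H(3,q^2)$ on a hyperbolic line $\langle P,Q\rangle$ give $q+1$ lines $\rho(X)$ of $Q^-(5,q)$ all lying in a common $3$-dimensional subspace of $\PG(5,q^2)$. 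This follows either from a short coordinate argument in the spirit of the proof of Proposition \ref{prop_3}, or from the classical fact that the lines of $\PG(3,q^2)$ meeting a fixed line $L$ form a linear complex whose $\kappa$-image is a $3$-space section of $Q^+(5,q^2)$. The remaining ingredients, namely the appeal to Proposition \ref{prop_3} and the two transitivity arguments, are essentially formal.
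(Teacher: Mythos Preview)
Your proposal is correct and follows essentially the same route as the paper: transport the relations through $\rho$ using the standard Klein-correspondence dictionary for $R_1,R_2,R_3$, invoke Proposition \ref{prop_3} for $R_5$, obtain $R_4$ by complementation, and then use transitivity of $\PGU(4,q^2)$ on points of $H(3,q^2)$ and of the orthogonal group on lines of $Q^-(5,q)$ to remove the dependence on $P$ and $l$. If anything, you are more explicit than the paper about the verification for $R_2$ and $R_3$, which the paper dispatches in one line as ``well-known correspondences''.
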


Let $\{A_i\}_{0\le i\le d}$ be the adjacency matrices for a $d-$class association scheme $\mathfrak X=(\cX,\{R_i\}_{0\le i\le d})$,  and let  $\{E_i\}_{0\le i\le d}$ be the set of minimal idempotents for $\mathfrak X$.
For any subset $Y$  of $\cX$, $\chi_Y$ will denote the characteristic vector of $Y$. 

The {\em inner distribution} of a non-empty subset $Y$ of $\cX$ is the array $\ba=(a_0,\ldots, a_d)$ of the non-negative rational numbers $a_i$ given by
\[
a_i=|Y|^{-1}|R_i\cap Y^2|=|Y|^{-1}\chi_Y A_i \chi_Y^\top.
\]

Let $M$ be a subset of $\{0,\ldots,d\}$ with $0\in M$. A non-empty subset $Y$ of $\cX$ is an $M-${\em clique} of $\mathfrak X$ if it satisfies
\[
\mbox{$R_i\cap Y^2=\varnothing$, \ \ \ \ \ \ \ for all $i\in \{0,\ldots,d\}\setminus M$,} 
\] 
or equivalently, the $i-$th entry of the inner distribution $\ba$ of $Y$ is zero, for all $i\in \{0,\ldots,d\}\setminus M$. 
Let $T$ be a subset of $\{1,\ldots,d\}$. A non-empty subset $Y$ of $\cX$ is a $T-${\em design} of $\mathfrak X$ if its inner distribution $\ba$ satisfies 
\[
\sum_i{a_i\cQ(i,j)}=0, \ \ \ \ \ \mathrm{\ for\ all\ } j\in T,
\] 
where $\cQ$ is the second eigenmatrix of the scheme. Equivalently, $Y$ is a $T-$design if and only if $\chi_YE_j= 0$, for all $j\in T$.

The \emph{dual degree set} of a vector $v \in \mathbb{R}^{|\cX|}$ is the set of indices $j\in\{1,\ldots, d\}$ such that $vE_j\ne 0$.
Two vectors of $\mathbb{R}^{|\cX|}$ are \emph{design-orthogonal} if their dual degree sets are disjoint.

Recall that a {\em pseudo-oval} of $\PG(5,q)$ is a set $\cS$ of $q^2+1$ lines, such that any three distinct elements of $\cS$ span the whole space.  We consider pseudo-ovals consisting only of lines of  ${Q}^-(5,q)$.

By transferring on $H(3,q^2)$ the characterization of pseudo-conics of $Q^-(5,q)$ by Thas \cite[Theorem 6.4]{thas}, the characterization of Cossidente, King and Marino \cite{ckm} is obtained as a corollary of Proposition \ref{prop_3}.  
\begin{corollary}[{\cite[Theorem 3.1]{ckm}}]
A special set $\tilde \cS$ of $H(3,q^2)$  is of CP-type if and only if $z(P,Q,R)=e$, for all  triples of distinct points $P,Q,R$ of $\tilde\cS$.
\end{corollary}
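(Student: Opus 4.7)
The plan is to chain three known equivalences. Via the map $\rho=\tau\circ\kappa$ introduced at the start of this section, a special set $\tilde\cS$ of $H(3,q^2)$ corresponds bijectively to a pseudo-oval $\cS=\rho(\tilde\cS)$ of lines of $Q^-(5,q)$. By \cite[Theorem 2.1]{cmp} (quoted in the introduction), $\tilde\cS$ is of CP-type if and only if $\cS$ is a pseudo-conic. By Thas's characterisation \cite{thas}, a pseudo-oval of $Q^-(5,q)$ is a pseudo-conic if and only if every triple of distinct lines in it is in perspective. Finally, Proposition \ref{prop_3} translates ``being in perspective'' into the condition $z(P,Q,R)=e$ on the corresponding triple of points of $H(3,q^2)$.

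Once these three bricks are in place, the corollary follows by assembling the string of equivalences
\[
\text{$\tilde\cS$ is of CP-type} \iff \text{$\rho(\tilde\cS)$ is a pseudo-conic} \iff \text{every triple of distinct lines of $\rho(\tilde\cS)$ is in perspective} \iff z(P,Q,R)=e \ \forall\,P,Q,R\in\tilde\cS \text{ distinct}.
\]
No further computation is needed; both directions of the corollary are obtained simultaneously.

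The only point requiring a moment of care is verifying that Proposition \ref{prop_3} applies to every triple drawn from $\tilde\cS$: the notion of being in perspective, as used in that proposition, is defined for three lines of $Q^-(5,q)$ spanning the whole of $\PG(\widehat V)$. But this is exactly the defining property of a pseudo-oval, so any three distinct elements of $\rho(\tilde\cS)$ span $\PG(5,q)$. As a by-product, the points of $\tilde\cS$ are pairwise non-collinear in $H(3,q^2)$ (otherwise two of their $\rho$-images would meet), so $h(\p,\q)\neq 0$ for every pair, and $z(P,Q,R)$ is well-defined and non-zero. The ``hardest'' ingredient is imported rather than proved here, namely the translation between CP-type and pseudo-conic provided by \cite[Theorem 2.1]{cmp}; Proposition \ref{prop_3} is what makes the characterisation $z(P,Q,R)=e$ equivalent to the geometric (perspective) condition, and is the genuinely new content of the corollary.
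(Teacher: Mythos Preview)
Your argument is correct and follows the same route as the paper's own proof: both chain \cite[Theorem 2.1]{cmp} (CP-type $\Leftrightarrow$ pseudo-conic), Thas's characterisation \cite[Theorem 6.4]{thas} (pseudo-conic $\Leftrightarrow$ every triple in perspective), and Proposition \ref{prop_3} (perspective $\Leftrightarrow$ $z(P,Q,R)=e$). Your additional care in checking that Proposition \ref{prop_3} is applicable to every triple, and that $z$ is well defined, is a welcome clarification the paper leaves implicit.
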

\begin{proof}
By Theorem 2.1 in \cite{cmp}, a special set $\tilde\cS$ of CP-type corresponds to a pseudo-conic $\cS$ of $Q^-(5,q)$ under the Klein map $\rho$. By \cite[Theorem 6.4]{thas}, this means that any three distinct elements $l,m,n$ of $\cS$  are in perspective, that is, by Proposition \ref{prop_3}, $z(P,Q,R)=e$, where $P=\rho^{-1}(l)$, $Q=\rho^{-1}(m)$ and $R=\rho^{-1}(n)$. We note that $z(P,Q,R)=e$ is equivalent to the fact that the Segre invariant of $(P,Q,R)$ defined in \cite{ckm} is equal to 1.
\end{proof}
\begin{proposition}\label{lem_18}
Let $\cS$ be a set of $q^2+1$ lines of $Q^-(5,q)$. Then, $\cS$ is a pseudo-oval if and only if every non-degenerate hyperplane contains  either $0$ or $2$ elements of $\cS$.
\end{proposition}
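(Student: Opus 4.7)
The plan is to work in $\PG(\widehat V)$ with the polarity of $Q^-(5,q)$ and to prove both implications by a double count of incidences $(l,\Pi)$ and of pairs $(\{l,m\},\Pi)$, where $l,m\in\cS$ lie in a non-degenerate hyperplane $\Pi$. The decisive geometric input is that every $l\in\cS$ is a generator of $Q^-(5,q)$, so $l^\perp/l$ is anisotropic elliptic; equivalently, the singular points of $l^\perp$ are precisely the points of $l$. Two consequences will be used throughout: $l^\perp$ contains $q^2(q+1)$ non-singular points, so each $l\in\cS$ lies in exactly $q^2(q+1)$ non-degenerate hyperplanes; and for any other generator $m$ disjoint from $l$, the subspace $l\cap m^\perp$ consists of singular points of $m^\perp$ lying on $l$, so it is contained in $l\cap m=0$, and the polar $\langle l,m\rangle^\perp=l^\perp\cap m^\perp$ is a $2$-space disjoint from $l$ that carries only non-singular points.

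For the forward direction, assume that $\cS$ is a pseudo-oval. Any two elements of $\cS$ must be skew---else $\dim\langle l,m,n\rangle<6$ for any third $n$---and no three elements lie in a common hyperplane, so $n_\Pi\le 2$. By the preceding remark, for every distinct pair $l,m\in\cS$ all $q+1$ hyperplanes through $\langle l,m\rangle$ are non-degenerate. Writing $N_i$ for the number of non-degenerate hyperplanes with $n_\Pi=i$, counting incidences in two ways yields
\[
N_1+2N_2=(q^2+1)\,q^2(q+1),\qquad N_2=\binom{q^2+1}{2}(q+1),
\]
and these force $N_1=0$; hence $n_\Pi\in\{0,2\}$ on every non-degenerate hyperplane.

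For the reverse direction, assume that $|\cS|=q^2+1$ and that every non-degenerate hyperplane contains $0$ or $2$ elements of $\cS$. The central task is to show that distinct elements of $\cS$ are skew. If $l,m\in\cS$ share a point $O$, then $\langle l,m\rangle$ is a plane contained in $O^\perp$, and $\langle l,m\rangle^\perp$, reduced mod $O$ inside $O^\perp/O\cong Q^-(3,q)$, is the polar of the secant joining the images of $l$ and $m$; this polar is an external line of $Q^-(3,q)$, so $\langle l,m\rangle^\perp$ has $O$ as its only singular point, and exactly $q^2+q$ non-degenerate hyperplanes pass through $\langle l,m\rangle$. Fixing $l\in\cS$, the hypothesis makes each of the $q^2(q+1)$ non-degenerate hyperplanes through $l$ contain exactly one other element of $\cS$; letting $a$ be the number of $m\in\cS\setminus\{l\}$ meeting $l$, the identity
\[
a(q^2+q)+(q^2-a)(q+1)=q^2(q+1)
\]
collapses to $a(q^2-1)=0$, whence $a=0$. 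With pairwise skewness in hand, three elements of $\cS$ cannot lie in a non-degenerate hyperplane by hypothesis, nor in a degenerate hyperplane $O^\perp$, since the $Q^-(5,q)$-lines of a tangent hyperplane all pass through $O$. Hence any three elements of $\cS$ span the whole space and $\cS$ is a pseudo-oval.

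The main obstacle is the evaluation $q^2+q$ for the number of non-degenerate hyperplanes through the plane spanned by two intersecting generators; once this is reduced to the classical fact that the polar of a secant of $Q^-(3,q)$ is an external line, everything else is bookkeeping with the two counting identities.
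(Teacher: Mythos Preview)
Your proof is correct. The forward direction is the same double count as in the paper, merely organised via $N_1,N_2$ rather than via $\sum_i\mu_i$ and $\sum_i\mu_i(\mu_i-1)$.

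The converse, however, follows a genuinely different route. The paper takes an arbitrary triple $l,m,n\in\cS$ and argues, case by case, that if $l,m$ are concurrent, or if $l,m$ are skew but $n$ meets $\langle l,m\rangle$, then $l,m,n$ sit inside some non-degenerate hyperplane, contradicting the $0$--or--$2$ hypothesis; the geometric verifications are left as ``simple geometric arguments''. You instead first isolate pairwise skewness by a second double count: knowing that a non-degenerate hyperplane through $l$ must pick up exactly one further element of $\cS$, you compare the contribution $q^2+q$ of a concurrent pair (obtained via the external-line property of the polar of a secant in $Q^-(3,q)$) with the contribution $q+1$ of a skew pair, and the identity $a(q^2+q)+(q^2-a)(q+1)=q^2(q+1)$ forces $a=0$. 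This is neater in that it avoids the case analysis on a third line and makes every geometric step explicit; the paper's approach is shorter on the page but relies on the reader supplying the missing hyperplane-containment arguments. Your final step, ruling out three skew generators in a tangent hyperplane $P^\perp$ because every generator in $P^\perp$ passes through $P$, is exactly the fact (equivalent to $l^\perp/l$ being anisotropic) that underlies the paper's unspoken ``simple geometric arguments'', so the two converses ultimately rest on the same structural property of $Q^-(5,q)$.
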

\begin{proof}
Let $\cS$ be a pseudo-oval. There are  $q^2(q^3+1)$ non-degenerate hyperplanes in a $\PG(5,q)$,  among which  $q^2(q+1)$ contain a given totally singular line. 
A simple double count shows that the number of non-degenerate hyperplanes containing a pair of disjoint totally singular lines is $q+1$.  
Now count the triples $(l,m,\Pi)$ where $l$ and $m$ are distinct totally singular lines of $\cS$,  $\Pi$ is a non-degenerate hyperplane, under the conditions that $l$ and $m$ are disjoint and $\Pi$ contains $\langle l, m \rangle$. For any non-degenerate hyperplane $\Pi_i$, let $\mu_i$ be the number of elements of $\cS$ contained in $\Pi_i$. 

Then, we have
\begin{align*}
\sum_i \mu_i(\mu_i-1)&= |\cS| (|\cS|-1) (q+1)\\
&=(q^2+1)q^2(q+1).
\end{align*}
On the other hand, the number of  pairs $(l,\Pi_i)$, with $l\in\cS$ contained in $\Pi_i$, is
\begin{align*}
\sum_i \mu_i&= |\cS| q^2(q+1)\\
&= (q^2+1)q^2(q+1).
\end{align*}
Since the two sums are equal, it follows that  
\[
\sum_i \mu_i(2-\mu_i)=\sum_i\mu_i - \sum_i \mu_i(\mu_i-1)=0.
\]
Every three elements of $\cS$ span the whole space, so $\mu_i\le 2$ for each $i$.
Therefore, each term of the left-most sum is  positive,  hence $\mu_i(2-\mu_i)=0$
for each $i$, i.e., $\mu_i\in\{0,2\}$.

Conversely, let $l,m,n$ be three lines of $\cS$. Assume that $l$ and $m$ intersect. Simple geometric arguments show that $\langle l,m,n\rangle$ is a 4-dimensional subspace which is contained in some non-degenerate hyperplane. This contradicts the property of $\cS$.  Assume that $l$ and $m$ are disjoint and $n$ intersects  $\langle l,m\rangle$ in a point. Then, $\langle l,m,n\rangle$ is a non-degenerate hyperplane, and we have again a contradiction. Therefore, $\cS$ is a pseudo-oval.
\end{proof}

\begin{remark}
The ``\ if\ '' part of Proposition \ref{lem_18} was already proved in  \cite{pt}, see result 8.7.2. To check this,   note that a hyperplane containing $l^\perp$, for some totally singular line $l$, is degenerate; and conversely. 
\end{remark}
\begin{theorem}\label{lem_13}
Let $\cS$ be a pseudo-oval of $Q^-(5,q)$. Then
\begin{itemize}
\item[(a)] $\cS\setminus\{l\}$ is a $\{0,4,5\}-$clique of $\mathfrak X_l$, and
a $\{1\}-$design of $\mathfrak X_l$, for each $l\in \cS$.
\item[(b)] The following are equivalent:
\begin{itemize}
\item[(i)] $\cS\setminus\{l\}$ is a $\{0,5\}-$clique, for each $l\in \cS$;
\item[(ii)] $\cS\setminus\{l\}$ is a $\{1,5\}-$design, for each $l\in \cS$;
\item[(iii)] $\cS$ is a pseudo-conic;
\end{itemize}
\end{itemize}
\end{theorem}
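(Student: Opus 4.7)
The plan is to derive both parts from the inner distribution of $Y=\cS\setminus\{l\}$ together with the second eigenmatrix $\cQ$ of $\mathfrak X_l$ displayed in Theorem \ref{prop_1}, using Thas's perspective characterisation of pseudo-conics \cite[Theorem 6.4]{thas} for the geometric content of part (b).

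For part (a), I would first check that $\cS\setminus\{l\}\subseteq\cX'$. The pseudo-oval axiom forbids any two distinct elements of $\cS$ from sharing a point (otherwise three of them would span at most a $5$-dimensional subspace), and in $Q^-(5,q)$ the totally singular points of $l^\perp$ all lie on $l$, since $l$ is maximal totally singular and hence the form induced on $l^\perp/l$ is anisotropic; consequently a totally singular line skew to $l$ is automatically skew to $l^\perp$. The same three-line spanning argument, applied to any pair $m,n\in\cS\setminus\{l\}$, rules out the relations $R'_1,R'_2,R'_3$, giving the $\{0,4,5\}$-clique property. Thus the inner distribution of $Y$ has the shape $\ba=(1,0,0,0,a_4,a_5)$ with $a_4+a_5=q^2-1$. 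For the $\{1\}$-design property I would plug this into $\sum_i a_i\cQ(i,1)=0$: reading column $j=1$ of $\cQ$, the row-$4$ and row-$5$ entries are both $-(q+1)$ and the row-$0$ entry equals $(q-1)(q+1)^2=(q+1)(q^2-1)$, so the sum collapses automatically to $(q+1)(q^2-1)-(q+1)(a_4+a_5)=0$.

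For part (b), the equivalence (i)$\Leftrightarrow$(iii) is the geometric heart: by the very definition of $R'_5$, the condition that $\cS\setminus\{l\}$ be a $\{0,5\}$-clique for every $l\in\cS$ is literally the statement that every triple of distinct lines of $\cS$ is in perspective, which by Thas's theorem is equivalent to $\cS$ being a pseudo-conic. The equivalence (i)$\Leftrightarrow$(ii) then reduces to a one-line computation with column $j=5$ of $\cQ$: substituting $\ba=(1,0,0,0,a_4,q^2-1-a_4)$ into $\sum_i a_i\cQ(i,5)$ and simplifying, the $(q-1)^2$-terms cancel and one is left with $\frac12q^3a_4$. Hence $\chi_YE_5=0$ if and only if $a_4=0$; since $\chi_YE_1=0$ is automatic by part (a), condition (ii) is equivalent to $a_4=0$, which is precisely (i).

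The main obstacle, at least conceptually, is that the equivalence (i)$\Leftrightarrow$(iii) ultimately rests on the (non-trivial) perspective characterisation of pseudo-conics rather than on properties of the scheme $\mathfrak X_l$ itself; all the other implications reduce to routine linear algebra against the explicit matrix $\cQ$, with the only place one might slip being the sign-bookkeeping in the column-$5$ cancellation.
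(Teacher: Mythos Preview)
Your proposal is correct and follows essentially the same approach as the paper: compute the inner distribution $\ba=(1,0,0,0,a_4,q^2-1-a_4)$ of $\cS\setminus\{l\}$, evaluate the relevant entries of $\ba\cQ$, and invoke \cite[Theorem 6.4]{thas} for the geometric equivalence (i)$\Leftrightarrow$(iii). The paper writes out the full MacWilliams transform $\ba\cQ=\bigl(q^2,0,q^3(q-1),q^2(q^2-1),\tfrac12q^3(2(q-1)^2-a_4),\tfrac12q^3a_4\bigr)$ in one line, whereas you compute only the column-$1$ and column-$5$ entries separately, but the content is identical; your extra justification that $\cS\setminus\{l\}\subseteq\cX'$ (via the anisotropy of $l^\perp/l$) is a detail the paper leaves implicit.
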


\begin{proof}
Let $l$ be any  line of $\cS$ and set $\cS'=\cS\setminus\{l\}$. By the definition of pseudo-oval and the scheme  $\mathfrak X_l$, we find that $\cS'$ is a $\{0,4,5\}-$clique of $\mathfrak X_l$.

Let $\mathbf{a}$ be the inner distribution of $\cS'$ (note that $|\cS'|=q^2$):
\[
\mathbf{a} =\frac{1}{q^2}\left( \chi_{\cS'} A_i \chi_{\cS'}^\top \right)_{i=0}^5=(1,0,0,0,x,q^2-x-1),
\]
where $x$ is undetermined.   
The MacWilliams Transform $\ba \cQ$ of $\mathbf{a}$ is
\begin{equation}\label{eq_19}
\ba \cQ=\left(q^2,0,q^3(q-1),q^2(q^2-1),\frac12q^3(2q^2-4q+2-x),\frac{q^3x}{2}\right).
\end{equation}
Therefore, $\cS'$ is a $\{1\}-$design, and (i) and (ii) are equivalent in (b). 
To see the equivalence with  (iii) in (b), note  that $\cS'$ is a $\{0,5\}-$clique with respect to every $l$ in $\cS$  if and only if $l,m,n$ are in perspective, for every triple $l,m,n$ of distinct lines of $\cS$, i.e.,   $\cS$ is a pseudo-conic by \cite[Theorem 6.4]{thas}.
\end{proof}
Let $\cU_{p_1,p_2}=\cO_{1}\cup \cO_{2}$ be any set constructed as in Section \ref{sec2U}, and $\Pi$ the hyperplane containing it. Let $\chi_{\cO_i}$  be the characteristic vectors of $\cO_i$, , $i=1,2$. We will see (Proposition \ref{prop_4}) that $v=\chi_{\cO_1}-\chi_{\cO_2}$ and the characteristic vector of a pseudo-conic are design-orthogonal.

We introduce the following subsets of $\cX$ referred to $\cU_{p_1,p_2}$:
\begin{itemize} 
 \item[-] $V$  is the set of lines of $\cX$ contained in $\Pi$ and not intersecting $p_1\cup p_2$; 
 \item[-] $J_i$ is  the set of lines of $\cX$ not contained in $\Pi$ and intersecting $p_i$, $i=1,2$;  
 \item[-] $W$ is the set of lines of $\cX$ not contained in $\Pi$ and  intersecting $(B^\perp\cap\Pi) \setminus (p_1\cup p_2)$;  
 \item[-] $Z$ is the set of lines of $\cX$ not contained in $\Pi$ and  not intersecting $B^\perp\cap\Pi$.
 \end{itemize}
\begin{lemma}\label{lem_11}\leavevmode
\begin{itemize}
\item[] $\chi_{\cO_1}A_1=\j+ (q-2)\chi_{\cO_1}+(q-1)(\chi_{\cO_2\cup V}+\chi_{J_1})-\chi_{J_2\cup W}$;
\item[] $\chi_{\cO_1}A_2= \j-\chi_{\cO_1}-\chi_{\cO_2\cup V}-\chi_{J_2\cup W}-\chi_{Z}$;
\item[] $\chi_{\cO_1}A_3=  \j+(q^2-q-1)\chi_{\cO_1}-\chi_{\cO_2\cup V}+(q^2-q-2)\chi_{J_1}+(q-1)\chi_{J_2\cup W}+(q-2)\chi_{Z}$;
\item[] $\chi_{\cO_1}A_4=(q^2-1)(\j-\chi_{\cO_1}-\chi_{\cO_2}-\chi_{J_1})-(q-1)(\chi_{J_2}+\chi_V+2\chi_Z)-(2q-1)\chi_{W}$;
\item[] $\chi_{\cO_1}A_5=\j-\chi_{\cO_1}+(q^2-q-1)\chi_{\cO_2}-\chi_{J_1}-\chi_{J_2}+(q-1)\chi_W+(q-2)\chi_{Z}$,
\end{itemize}
where $\j$ is the all-ones vector. Similarly, for $\chi_{\cO_2}$.
\end{lemma}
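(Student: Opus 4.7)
First I would verify that the seven subsets $\cO_1,\cO_2,V,J_1,J_2,W,Z$ partition $\cX$, which gives the identity
\[
\j=\chi_{\cO_1}+\chi_{\cO_2}+\chi_V+\chi_{J_1}+\chi_{J_2}+\chi_W+\chi_Z.
\]
For any $n\in\cX$, either $n\subset\Pi$ --- in which case $n$ is a generator of the quadric $Q(4,q)=Q^-(5,q)\cap\Pi$ that avoids $B$ (since disjointness from $l$ forces $B\notin n$) and so falls into $\cO_1$, $\cO_2$, or $V$ according to whether it meets $p_1$, meets $p_2$, or meets neither --- or $n\cap\Pi$ is a single point $Y$, in which case $n$ lies in $J_1$, $J_2$, $W$, or $Z$ according to the position of $Y$ relative to $p_1,p_2$ and $B^\perp\cap\Pi$. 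Disjointness of $\cO_1$ and $\cO_2$ follows from the fact that the plane $\langle p_1,p_2\rangle$ meets $Q^-(5,q)$ only in $p_1\cup p_2$. Given the partition, each asserted formula reduces to computing, for each $i\in\{1,\ldots,5\}$ and each part $S$, the constant $[\chi_{\cO_1}A_i](n)=|\{m\in\cO_1:(m,n)\in R'_i\}|$ for $n\in S$.

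For the relations $R'_1$, $R'_2$, and $R'_3$ the counts are direct exercises in the incidence geometry of $Q^-(5,q)$. The key structural fact for $m\in\cO_1$ is that $m\subset\Pi$, so if $n\not\subset\Pi$ then any intersection $m\cap n$ is confined to the single point $n\cap\Pi$, reducing the $R'_1$-count to enumerating lines of $\cO_1$ through a fixed point of $\Pi$; if $n\subset\Pi$ then one counts concurrent totally singular lines inside $Q(4,q)$. For $R'_2$, the condition $\dim\langle l,m,n\rangle=4$, combined with $l\cap\Pi=\{B\}$ and $m\subset\Pi$, forces $n\subset\langle l,m\rangle$, a very restrictive condition which isolates a small explicit set of $n$ to inspect. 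The $R'_3$-counts can then be obtained either by direct enumeration of disjoint pairs spanning a hyperplane with $l$, or by subtraction from the identity $\sum_{i=0}^5 \chi_{\cO_1}A_i=|\cO_1|\j=q^2\j$ once the counts for the other relations are in hand.

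The principal obstacle is the perspective relations $R'_4$ and $R'_5$, where Proposition~\ref{lem_12} is decisive. The crucial observation is that for every $m\in\cO_1$ the unique generator through $B$ concurrent with $m$ in any cone $B^\perp\cap\Pi'\cap Q^-(5,q)$ is always $p_1$, regardless of the auxiliary hyperplane $\Pi'\supseteq\{B\}\cup m$; this is because $m\cap B^\perp$ consists of the single point $P=m\cap p_1$, which already lies on $p_1$. Hence for $n\in\cX$ with $\langle l,m,n\rangle$ equal to the whole space, the triple $(l,m,n)$ is in perspective if and only if the generator through $B$ concurrent with $n$ inside the hyperplane $\Pi':=\langle B,m,n\rangle$ equals $\tilde\sigma_{\Pi'}(p_1)$, where $\tilde\sigma_{\Pi'}$ is the involution associated with $\Pi'$ by Lemma~\ref{lem_8}(iii). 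When $n\subset\Pi$ one has $\Pi'=\Pi$ and $\tilde\sigma_{\Pi'}(p_1)=p_2$, so perspective holds exactly for $n\in\cO_2$; when $n\not\subset\Pi$, the hyperplane $\Pi'$ varies with $n$ and one must re-analyse the involution inside the new cone on a case-by-case basis as $n$ ranges over $J_1,J_2,W,Z$. I expect these four cases to be the main technical difficulty; projecting from $B$ onto the conic $(B^\perp\cap\Pi'\cap Q^-)/B$ should allow one to read off the involution match from where the image line $\sigma$ meets the conic, reducing each case to a tractable count. Once the $R'_5$-coefficients are computed, the $R'_4$-coefficients follow by subtraction using the $R'_1,R'_2,R'_3$ totals, and the analogous formulas for $\chi_{\cO_2}$ follow by the symmetry $p_1\leftrightarrow p_2$ that $\tilde\sigma$ implements.
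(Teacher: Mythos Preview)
Your plan is correct and matches the paper's proof almost exactly: both partition $\cX$ into $\cO_1,\cO_2,V,J_1,J_2,W,Z$, compute $\chi_{\cO_1}A_1,\chi_{\cO_1}A_2,\chi_{\cO_1}A_3$ by direct incidence counts, handle $\chi_{\cO_1}A_5$ via Proposition~\ref{lem_12} using the flag $(B,l)$ together with your key observation that the generator through $B$ meeting any $m\in\cO_1$ is always $p_1$, and obtain $\chi_{\cO_1}A_4$ by subtraction. The one ingredient you do not mention but the paper uses to finish the $n\in W$ and $n\in Z$ cases is Remark~\ref{rem_2}: when $\Lambda\cap\Pi$ meets $Q^-(5,q)$ in a quadratic cone (rather than a hyperbolic quadric), the hyperplanes $\Lambda$ and $\Pi$ induce the \emph{same} involution $\tilde\sigma$, which forces $p=p_2$ and rules that subcase out.
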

\begin{proof}
We calculate $\chi_{\cO_1}A_1$. This is equivalent to counting how many lines of $\cO_1$ are concurrent with a fixed line $n\in\cX$. 

Assume $n\in \cO_1$. Then,  there are $q-1$ lines of $\cO_1$ concurrent with $n$. 
Assume $n\in \cO_2\cup V$. For each point of $p_1\setminus\{B\}$ there is exactly one line of $\cO_1$ intersecting $n$. So, we find $q$ lines of $\cO_1$ concurrent with $n$.
Assume $n\in J_1$. Set $R=n\cap p_1$. Then, the unique lines of $\cO_1$ concurrent with $n$ are those through $R$, which are $q$. Assume $n\in J_2\cup W$. Set $R=n\cap B^\perp$. The unique line joining $R$ and $p_1$ is $\langle B,R \rangle$, that is not in $\cX$. In this case $n$ contributes 0.
Assume $n\in Z$. Set $R=n\cap \Pi$. There is a unique line joining $R$ and $p_1$, and  it is  in $\cO_1$. In this case $n$ contributes 1. Finally, 
\[
\begin{array}{rcl}
\chi_{\cO_1}A_1 & = & (q-1)\chi_{\cO_1}+q\chi_{\cO_2\cup V}+q\chi_{J_1}+(\j-\chi_{\cO_1}-\chi_{\cO_2\cup V}-\chi_{J_1}-\chi_{J_2\cup W})\\[.05in]
         & = & \j+ (q-2)\chi_{\cO_1}+(q-1)(\chi_{\cO_2\cup V}+\chi_{J_1})-\chi_{J_2\cup W}.
\end{array}
\]

We now compute $\chi_{\cO_1}A_2$. This is equivalent to counting how many lines of $\cO_1$ are contained in the  4-dimensional subspace $\langle l,n \rangle$, for a fixed $n\in\cX$. 

Assume $n\in \cO_1$. Since the plane $\langle l,n \rangle\cap \Pi$, containing $n$ and $p_1$, is degenerate, there are no lines of $\cO_1$ different from $n$ satisfying the property.
Assume $n\in \cO_2\cup V$. By arguing as above, it is easy to see that there are no lines of $\cO_1$ different from $n$ satisfying the property in this case too.
Assume $n\in J_1$. The plane $\langle l,n \rangle\cap \Pi$ is degenerate containing $p_1$. Thus, it contains exactly a further totally singular line which is necessarily in $\cO_1$.  
Assume $n\in J_2\cup W$. By arguing as above, the plane $\langle l,n \rangle\cap \Pi$ is degenerate as it contains a totally singular line $p$ on $B$. Then,  the plane contains exactly a further totally singular line intersecting $p$, which is not in $\cO_1$. 
Assume $n\in Z$. Since the plane $\langle l,n \rangle\cap \Pi$  is non-degenerate, there are no lines of $\cO_1$ satisfying the property. 
Summarising, 
\[
\begin{array}{rclcl}
\chi_{\cO_1}A_2 & = & \chi_{J_1} & = & \j-\chi_{\cO_1}-\chi_{\cO_2\cup V}-\chi_{J_2\cup W}-\chi_{Z}.
\end{array}
\]

We now compute $\chi_{\cO_1}A_3$. This is equivalent to counting how many lines of $\cO_1$ share a point with the  4-dimensional subspace $\langle l,n \rangle$ which is not on $n$, for a fixed $n\in\cX$. 

Assume $n\in \cO_1$. Since the plane $\langle l,n \rangle\cap \Pi$, containing $n$ and $p_1$, is degenerate, there are $q(q-1)$ lines of $\cO_1$ sharing one point with $p_1$, different from $n\cap p_1$. 
Assume $n\in \cO_2\cup V$.  By arguing as above, it is easy to see that there are no lines of $\cO_1$  satisfying the property.
Assume $n\in J_1$.  The plane $\langle l,n \rangle\cap \Pi$,  containing $p_1$, is degenerate.  Then, it contains exactly a further totally singular line which is necessarily in $\cO_1$.  Hence, there are $(q-2)q+q-1=q^2-q-1$ lines of $\cO_1$ that intersect $p_1$.
Assume $n\in J_2\cup W$. The plane $\langle l,n \rangle\cap \Pi$ is degenerate as it contains a totally singular line $p$ on $B$ and a further totally singular line, say $s$, intersecting $p$. 
For any $R\in p_1\setminus\{B\}$, there is a unique line of $\cO_1$ on $R$ concurrent with $s$. Hence, there are $q$ lines of $\cO_1$ that intersect $\langle l,n \rangle$ in a point not in $n$.
Assume $n\in Z$. The plane $\langle l,n \rangle\cap \Pi$  is non-degenerate, and, together with $p_1$, it spans a 4-dimensional subspace  intersecting $Q^-(5,q)$ in either a hyperbolic quadric or  a quadratic cone projecting the conic in the plane  from a point of $p_1\setminus\{B\}$. Anyway, the number of lines of $\cO_1$ that meet $\langle l,n \rangle$ in exactly one point not in $n$ is $q-1$.
Therefore, 
\[
\begin{array}{rclcl}
\chi_{\cO_1}A_3 & = & q(q-1)\chi_{\cO_1}+(q^2-q-1)\chi_{J_1}+q\chi_{J_2\cup W}+(q-1)\chi_{Z}\\[.05in]
        & = & \j+(q^2-q-1)\chi_{\cO_1}-\chi_{\cO_2\cup V}+(q^2-q-2)\chi_{J_1}+(q-1)\chi_{J_2\cup W}+(q-2)\chi_{Z}.
\end{array}
\]

We now compute $\chi_{\cO_1}A_5$. This is equivalent to counting how many lines $m$ of $\cO_1$ span the whole space  together with $l$ and a fixed line $n\in\cX$, such that $l,m,n$ are in perspective. 

For any $n\in \cO_1$, there is no line of $\cO_1$ satisfying the property.
Assume $n\in \cO_2$.  By the arguments used to calculate $\chi_{\cO_1}A_1$, there are $q$ lines of $\cO_1$ which are concurrent with $n$. All the other $q^2-q$ lines of $\cO_1$ satisfy the property by Lemma \ref{lem_12}.  If $n\in  V$, there is no line of $\cO_1$ satisfying the property by Lemma \ref{lem_12}.
For any $n\in J_1$, there is no line of $\cO_1$ satisfying the property. 
Assume $n\in J_2$. For any given line $m\in\cO_1$, by Lemmas \ref{lem_8} (iii) and \ref{lem_12}, the lines in $\cX$ that satisfy the property are those contained in the  unique hyperplane $\Pi$ such that $p_1$ and $p_2$ correspond under the involution $\tilde\sigma$. Therefore, for any line $n\in J_2$, there are no lines in $\cO_1$ satisfying the property.
Assume $n\in W$. Let $p\neq p_2$ the unique totally singular line on $B$ concurrent with $n$. 
By Lemma \ref{lem_12},  $m\in\cO_1$ satisfies the property if and only if $p$ corresponds to $p_1$ under the involution arising from some non-degenerate hyperplane containing $p_1$ and $p$ but not $l$. Let $\Lambda$ be such a hyperplane. The 4-dimensional subspace $\Lambda\cap\Pi$, containing $p$ and $p_1$,  meets $Q^-(5,q)$ in either a quadratic cone or a hyperbolic quadric. If the former case occurred,  $\Lambda$ and $\Pi$ would define the same line $\sigma$ by Remark \ref{rem_2}, and then $p=p_2$. Hence, the intersection is necessarily a hyperbolic quadric. This implies that the number of lines of $\cO_1$ satisfying the property is $q$.
Assume $n\in Z$. Let $p$ the unique totally singular line on $B$ concurrent with $n$. 
By Lemma \ref{lem_12},  $m\in\cO_1$ satisfies the property if and only if $p$ corresponds to $p_1$ under the involution arising from some non-degenerate hyperplane containing $p_1$ and $p$ but not $l$. Let $\Lambda$ be such a hyperplane. The 4-dimensional subspace $\Lambda\cap\Pi$, containing $p_1$,  meets $Q^-(5,q)$ in the line $p_1$, a quadratic cone or a hyperbolic quadric. If the former case occurred, $\Lambda\cap\Pi=p_1^\perp$ from which  $\Lambda$ and $\Pi$ would be degenerate (as $\Lambda^\perp,\Pi^\perp\in p_1$). Hence, the intersection is necessarily a quadratic cone or a hyperbolic quadric. In each case, there is exactly one line of $\cO_1$ concurrent with $n$, and so there are $q-1$ lines satisfying the property.

Finally, 
\[
\begin{array}{rclcl}
\chi_{\cO_1}A_5 & = & (q^2-q)\chi_{\cO_2}+q\chi_{W}+(q-1)\chi_{Z}\\[.05in]
        & = & \j-\chi_{\cO_1}+(q^2-q-1)\chi_{\cO_2}-\chi_{J_1}-\chi_{J_2}+(q-1)\chi_W+(q-2)\chi_{Z}.
\end{array}
\]

We will now calculate $\chi_{\cO_1}A_4$, using the fact that the sum of the adjacency matrices is the all-ones matrix $J$: 
\begin{align*}
\chi_{\cO_1}A_4 & = \chi_{\cO_1}J - (\chi_{\cO_1}I +\chi_{\cO_1}A_1 +\chi_{\cO_1}A_2+\chi_{\cO_1}A_3 +\chi_{\cO_1}A_5)\\[.05in]
& = (q^2-1)(\j-\chi_{\cO_1}-\chi_{\cO_2}-\chi_{J_1})-(q-1)(\chi_{J_2}+\chi_V+2\chi_Z)-(2q-1)\chi_{W}.
\end{align*}
The same arguments work for the characteristic vector $\chi_{\cO_2}$.\qedhere
\end{proof}
\begin{corollary}\label{cor_2}
Let $v=\chi_{\cO_1}-\chi_{\cO_2}$. Then:
\begin{itemize}
\item[] $vA_1=-v+q(\chi_{J_1}-\chi_{J_2})$;
\item[] $vA_2= \chi_{J_1}-\chi_{J_2}$;
\item[] $vA_3=  q(q-1)v +(q^2-2q-1)(\chi_{J_1}-\chi_{J_2})$;
\item[] $vA_4= -(q^2-q)(\chi_{J_1}-\chi_{J_2})$;
\item[] $vA_5= -(q^2-q)v$.
\end{itemize}
\end{corollary}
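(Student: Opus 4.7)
The plan is a direct subtraction based on Lemma \ref{lem_11}. Observe first that the construction of $\cU_{p_1,p_2}$ is symmetric in the roles of $p_1$ and $p_2$: swapping $p_1\leftrightarrow p_2$ swaps $\cO_1\leftrightarrow \cO_2$ and $J_1\leftrightarrow J_2$, while the sets $V$, $W$, $Z$ (defined in terms of $p_1\cup p_2$ and $B^{\perp}\cap \Pi$) are fixed. Therefore the formulas of Lemma \ref{lem_11} yield the corresponding expressions for $\chi_{\cO_2}A_i$ simply by interchanging the subscripts $1$ and $2$ everywhere.

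Next I would form $vA_i=\chi_{\cO_1}A_i-\chi_{\cO_2}A_i$ termwise. The cancellations that drive the simplification are just two: the sets $\cO_1$, $\cO_2$, $V$ are pairwise disjoint, so
\[
\chi_{\cO_2\cup V}-\chi_{\cO_1\cup V}=\chi_{\cO_2}-\chi_{\cO_1}=-v,
\]
and $J_1,J_2,W$ are pairwise disjoint, so
\[
\chi_{J_2\cup W}-\chi_{J_1\cup W}=\chi_{J_2}-\chi_{J_1}.
\]
All contributions from $\j$, $\chi_V$, $\chi_W$, $\chi_Z$ (symmetric in $p_1,p_2$) drop out. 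As a sample, for $i=1$ this gives
\[
vA_1=(q-2)v-(q-1)v+(q-1)(\chi_{J_1}-\chi_{J_2})+(\chi_{J_1}-\chi_{J_2})=-v+q(\chi_{J_1}-\chi_{J_2}),
\]
and similarly for $i=2,3,5$ by the same arithmetic.

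Finally, rather than recomputing $vA_4$ from the fourth line of Lemma \ref{lem_11} (whose cancellation is cleanest but still involves several symmetric terms), I would use the relation $A_0+A_1+A_2+A_3+A_4+A_5=J$, the all-ones matrix. Since $v\cdot J=(|\cO_1|-|\cO_2|)\,\j=0$ (both $\cO_i$ have size $q^2-1$) and $vA_0=v$, one obtains
\[
vA_4=-v-vA_1-vA_2-vA_3-vA_5,
\]
into which we substitute the already-computed formulas and simplify, obtaining $vA_4=-(q^2-q)(\chi_{J_1}-\chi_{J_2})$. There is no real obstacle here; the only thing to verify carefully is the symmetry claim at the start (so that the ``similarly'' clause at the end of Lemma \ref{lem_11} is being applied correctly) and the bookkeeping of disjointness among the seven sets $\cO_1,\cO_2,V,J_1,J_2,W,Z$ partitioning $\cX$.
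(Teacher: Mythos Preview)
Your proposal is correct and follows essentially the same route as the paper, which presents the corollary as an immediate consequence of Lemma \ref{lem_11} via direct subtraction using the symmetry $p_1\leftrightarrow p_2$. One small slip: $|\cO_i|=q^2$, not $q^2-1$ (cf.\ the case $n\in\cO_2$ in the proof of Lemma \ref{lem_11}, or Lemma \ref{basicstuff}(b) where $|\cU_{p_1,p_2}|=2q^2$), but since all you need is $|\cO_1|=|\cO_2|$ to conclude $vJ=0$, this does not affect the argument.
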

\begin{proposition}\label{prop_4}
The dual degree set of $v=\chi_{\cO_1}-\chi_{\cO_2}$ is $\{1,5\}$.
\end{proposition}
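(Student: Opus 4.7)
The plan is to exploit the identity $vA_5 = -q(q-1)\,v$ from Corollary \ref{cor_2} to localise $v$ inside the spectral decomposition of the scheme, and then use the second identity $vA_1 = -v + q(\chi_{J_1}-\chi_{J_2})$ to separate the two surviving eigenspaces.

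First I would read off the column of $\cP$ indexed by $A_5$ in Theorem \ref{prop_1}. The eigenvalue $-q(q-1)$ appears there exactly in rows $1$ and $5$ (the other entries being $(q^3-q)(q-1)$, $0$, $q$, and $q(q+1)$, which are pairwise distinct and different from $-q(q-1)$ for $q$ odd). Because $v$ is an eigenvector of $A_5$ with eigenvalue $-q(q-1)$, this immediately gives $vE_j=0$ for $j\in\{0,2,3,4\}$, so the dual degree set of $v$ is contained in $\{1,5\}$.

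Next I would rule out each of the two possibilities that the set could be a singleton. Set $u=\chi_{J_1}-\chi_{J_2}$ and note that $u$ and $v$ have disjoint supports (every line in $\cO_1\cup\cO_2$ lies in the hyperplane $\Pi$, while every line in $J_1\cup J_2$ does not), so in particular $u\not\in\langle v\rangle$. If $vE_1=0$, then $v=vE_5$ is a common eigenvector of all the $A_i$ with eigenvalues given by row $5$ of $\cP$; in particular $vA_1=-(q+1)v$. Combined with Corollary \ref{cor_2}, this forces $-(q+1)v=-v+qu$, i.e.\ $u=-v$, contradicting the disjointness of supports. Symmetrically, if $vE_5=0$, then $vA_1=(q^2-q-1)v$ from row $1$ of $\cP$, so $qu=(q^2-q)v$, giving $u=(q-1)v$, again impossible. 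Hence $vE_1\neq 0$ and $vE_5\neq 0$, and the dual degree set is exactly $\{1,5\}$.

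I do not expect a genuine obstacle here: once Corollary \ref{cor_2} is available, the only thing to be careful about is the bookkeeping in $\cP$, in particular confirming that $-q(q-1)$ appears \emph{only} in the two rows $1$ and $5$ of the $A_5$--column, and that the two linear relations derived above between $u$ and $v$ really are incompatible with the disjointness of supports.
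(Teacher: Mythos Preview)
Your argument is correct, and it takes a genuinely different route from the paper's proof. The paper computes each $vE_j$ directly by expanding $E_j=\frac{1}{q^5}\sum_i\cQ(i,j)A_i$ and substituting all five identities of Corollary~\ref{cor_2}; this yields explicit expressions such as $q^5vE_1=q^4(v+\chi_{J_1}-\chi_{J_2})$ and $q^5vE_5=-q^2(\chi_{J_1}-\chi_{J_2})$, from which the result is read off. You instead use only the two relations $vA_5=-q(q-1)v$ and $vA_1=-v+q(\chi_{J_1}-\chi_{J_2})$: the first, together with the $A_5$--column of $\cP$, pins $v$ down to $V_1\perp V_5$, and the second, compared with the $A_1$--entries $\cP(1,1)=q^2-q-1$ and $\cP(5,1)=-(q+1)$, rules out each singleton via the disjoint-support observation. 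Your approach is tidier and avoids the full linear-combination bookkeeping; the paper's approach, on the other hand, produces the explicit projections $vE_1$ and $vE_5$ as a by-product.
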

\begin{proof}
By using Theorem \ref{prop_1},  we  express each idempotent matrix $E_j$, $j=0,\ldots,5$, of $\mathfrak X_l$ in
terms of adjacency matrices as $E_i =\frac{1}{q^5}\sum_{j=0}^{5}{\cQ(j,i)A_j}$, where $\cQ(j,i)$ is the $(j,i)-$entry of $\cQ$.
From Corollary \ref{cor_2}, we have:
{\small
\begin{align*}
q^5vE_0  =\, &  vJ = 0;\\[.05in]
q^5vE_1  =\, &  v\left[(q-1)(q+1)^2(I+A_2)+(q^2-q-1)(A_1+A_3)-(q+1)(A_4+A_5)\right]\\
        = \,& q^4(v+\chi_{J_1}-\chi_{J_2})\neq 0;\\[.05in]
q^5vE_2  =\,& q^3 v\left[(q-1)I+\frac{(q-1)}{q+1}A_1-A_2-\frac{1}{q+1}A_3\right]=0;\\[.05in]
q^5vE_3  =\,&  q\,v\left[(q-1)^2(q+1)(I+A_2)-(q-1)(A_1+A_3)+A_4+A_5\right]=0;\\[.05in]
q^5vE_4  =\,&  \frac12 q^2v\left[(q-1)^3I-\frac{(q-1)^2}{(q+1)}A_1-(q-1)^2A_2+\frac{(q-1)}{(q+1)}A_3-A_4+(q-1)A_5\right]=0;\\[.05in]
q^5vE_5  =\, &  \frac12q^2v\left[(q+1)(q-1)^2I-(q-1)(A_1+A_5)-(q^2-1)A_2+A_3+A_4\right]\\[.05in]
         =\, & -q^2(\chi_{J_1}-\chi_{J_2})\neq 0.\qedhere
\end{align*}}
\end{proof}
Fix a totally singular line $l$ in $Q^-(5,q)$. For any given $B$ on $l$, two vectors are associated with each $\cU_{p_1,p_2}=\cO_1\cup \cO_2$ constructed on $(B,l)$, namely, $v=\chi_{\cO_1}-\chi_{\cO_2}$ and $-v=\chi_{\cO_2}-\chi_{\cO_1}$. Let $\cV_l$ be the set of all such vectors as $B$ varies on $l$.

\hspace{1cm}
\begin{lemma}\leavevmode\label{basicstuff}
\begin{enumerate}[(a)]
\item The number of $\cU_{p_1,p_2}$ constructed on the flag $(B,l)$  is $\frac{q+1}{2}q^3(q^2-1)$.
\item The number of $\cU_{p_1,p_2}$  constructed on the flag $(B,l)$  sharing a fixed line disjoint from $l$ is $(q+1)(q^2-1)$.
\end{enumerate}
\end{lemma}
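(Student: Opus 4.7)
The plan is to fix a point $B\in l$, enumerate each $\cU_{p_1,p_2}$ built on the flag $(B,l)$ via its underlying triple $(\Pi,\{p_1,p_2\})$, and sum over the $q+1$ choices of $B\in l$ at the end.

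For (a), I would count non-degenerate $\Pi$ through $B$ avoiding $l$ via polarity: these correspond to non-singular points of $B^\perp\setminus l^\perp$, giving $q^4-q^3=q^3(q-1)$ (using that the cone $B^\perp\cap Q^-(5,q)$ has $q^3+q+1$ points and $l^\perp\cap Q^-(5,q)=l$). For each such $\Pi$, Lemma~\ref{lem_8}(ii)--(iii) identifies the $q+1$ generators and describes $\tilde\sigma$; the observation that $y=\beta$ would force $\beta^{q+1}=\theta(\alpha^q-\alpha)$, contradicting non-degeneracy, shows $\tilde\sigma$ is fixed-point-free, yielding $(q+1)/2$ involution pairs. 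To ensure the resulting $\cU$'s are pairwise distinct, I would argue that $\cO_i$ uniquely determines $p_i\setminus\{B\}$ as the set of points at which exactly $q$ of its lines concur (while any other point of $\Pi$ supports at most one line of $\cO_i$), and hence recovers $B=p_1\cap p_2$, the pair $\{p_1,p_2\}$, and $\Pi=\langle \cO_1\cup\cO_2\rangle$. The per-$B$ count is thus $q^3(q-1)(q+1)/2$, and summing over $B\in l$ gives $\frac{q+1}{2}q^3(q^2-1)$.

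For (b), fix $m\in\cX$ disjoint from $l$. My first step is to show $l\cap m^\perp=\{0\}$ projectively: any common point $P$ would be singular and orthogonal to $l$, so $BP\subset l^\perp\cap Q^-(5,q)=l$ for any $B\in l$, forcing $P\in l$, a contradiction. It follows that $P_m:=m\cap B^\perp$ is a unique projective point for every $B\in l$ with $BP_m$ totally singular, and that the condition $m\in\cO_i$ forces $p_i=BP_m$ (since the intersection of $m$ with $p_i\subset B^\perp$ must lie in $m\cap B^\perp=\{P_m\}$). Hence the involution pair is determined, and the only remaining freedom is the hyperplane $\Pi$: a non-degenerate hyperplane containing $\langle B,m\rangle$ and avoiding $l$, corresponding under polarity to non-singular points of $\langle B,m\rangle^\perp\setminus\langle l,m\rangle^\perp$.

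The main obstacle is then the sign bookkeeping for $|Q^-(5,q)\cap\langle B,m\rangle^\perp|$ and $|Q^-(5,q)\cap\langle l,m\rangle^\perp|$. The plane $\langle B,m\rangle$ meets $Q^-(5,q)$ in $m\cup BP_m$, a pair of totally singular lines with node $P_m$, so $\widehat Q|_{\langle B,m\rangle}$ has radical $\langle P_m\rangle$ and hyperbolic quotient. Killing the isotropic direction $P_m$ inside the $6$-dimensional elliptic $\widehat V$ produces a $4$-dimensional elliptic form on $\langle P_m\rangle^\perp/\langle P_m\rangle$, which decomposes orthogonally as the hyperbolic plane $\langle B,m\rangle/\langle P_m\rangle$ together with an anisotropic elliptic $2$-plane $\langle B,m\rangle^\perp/\langle P_m\rangle$; this pins down $Q^-(5,q)\cap\langle B,m\rangle^\perp=\{P_m\}$ and gives $q^2+q$ non-singular points in $\langle B,m\rangle^\perp$. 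An identical Witt-index analysis applied to $\langle l,m\rangle$---which meets $Q^-(5,q)$ in the hyperbolic $Q^+(3,q)$ formed by the disjoint totally singular pair $l\cup m$---shows that $\langle l,m\rangle^\perp$ carries an anisotropic elliptic $2$-form, so all $q+1$ of its projective points are non-singular. Subtracting leaves $(q^2+q)-(q+1)=q^2-1$ admissible $\Pi$ per $B$, and summing over the $q+1$ choices of $B\in l$ yields $(q+1)(q^2-1)$, as required.
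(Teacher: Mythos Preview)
Your argument for (a) matches the paper's approach exactly—count non-degenerate hyperplanes through $B$ avoiding $l$ via the polarity, then multiply by $(q+1)/2$ involution pairs and sum over $B\in l$—though you supply details (the fixed-point-freeness of $\tilde\sigma$ and the injectivity of $(\Pi,\{p_1,p_2\})\mapsto\cU_{p_1,p_2}$) that the paper leaves implicit.

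For (b) you take a genuinely different route. The paper dispatches it by a one-line global double count of incident pairs $(\cU_{p_1,p_2},m)$: part (a) gives the total number of $\cU$'s, each has $2q^2$ lines, and there are $q^5$ lines $m\in\cX$, so the number through a fixed $m$ is $\frac{q+1}{2}q^3(q^2-1)\cdot 2q^2/q^5=(q+1)(q^2-1)$. Your direct count—pinning down $p_i=BP_m$ and then enumerating admissible $\Pi$ as non-singular points of $\langle B,m\rangle^\perp\setminus\langle l,m\rangle^\perp$ via a Witt-index analysis—is correct and more informative geometrically, but considerably heavier. One small wrinkle: your justification of $l\cap m^\perp=\{0\}$ is garbled (you appear to swap the roles of $l$ and $m$; as written, ``forcing $P\in l$'' is vacuous since $P\in l$ by hypothesis). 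The clean fix is to note that $m\in\cX$ means $m\cap l^\perp=\{0\}$ (this is how $\cX$ is defined in the paper), and taking perps gives $l\cap m^\perp=\{0\}$ immediately. With that repaired, your argument goes through.
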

\begin{proof}
To prove (a),  by using the polarity associated with $Q^-(5,q)$, it suffices to count the number of non-singular points in $B^\perp\setminus l^\perp$, for all $B\in l$. 
Secondly, (b) follows from the standard double counting of the pairs $(\cU_{p_1,p_2},m)$, with $m\in \cU_{p_1,p_2}$,  by considering (a) and the fact that the number of lines of each $\cU_{p_1,p_2}$ is $2q^2$.
\end{proof}
\begin{proposition}\label{sizeV}
 The size of $\cV_l$ is $\dim(V_1\perp V_5)=q^3(q-1)(q+1)^2$. 
\end{proposition}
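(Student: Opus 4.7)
The plan is to count $|\cV_l|$ via a two-to-one correspondence with the sets $\cU_{p_1,p_2}$ built on the flags $(B,l)$ as $B$ ranges over $l$, and then invoke Lemma \ref{basicstuff}(a).

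First I would set up the correspondence $\cU_{p_1,p_2}\mapsto\{v,-v\}$, where $v=\chi_{\cO_1}-\chi_{\cO_2}$. It is well-defined because each $\cU_{p_1,p_2}$ contributes exactly the two vectors $v$ and $-v$ to $\cV_l$, with $v\neq -v$ since $\cO_1\neq\cO_2$. It is injective because the positive and negative supports of $v$ recover the sets $\cO_1$ and $\cO_2$, and hence the set $\cU_{p_1,p_2}=\cO_1\cup\cO_2$ (together with its ambient data $B=p_1\cap p_2$ and the hyperplane $\Pi$, since each $\cO_i$ consists of the totally singular lines in $\Pi$ meeting $p_i$ away from $B$). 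Surjectivity onto the unordered sign-pairs in $\cV_l$ is immediate from the definition of $\cV_l$.

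By Lemma \ref{basicstuff}(a), the total number of $\cU_{p_1,p_2}$'s built on the flags $(B,l)$ as $B$ ranges over $l$ is $\tfrac{q+1}{2}q^3(q^2-1)$. Multiplying by two for the sign choice gives
\[
|\cV_l| \;=\; (q+1)\,q^3\,(q^2-1) \;=\; q^3(q-1)(q+1)^2,
\]
yielding the numerical equality in the proposition. Finally, by Proposition \ref{prop_4} every $v\in\cV_l$ has dual degree set $\{1,5\}$, so $\cV_l$ is contained in the direct sum $V_1\perp V_5$ of the two common eigenspaces of indices $1$ and $5$ in the scheme $\mathfrak X_l$, and the stated dimension identification $\dim(V_1\perp V_5)=q^3(q-1)(q+1)^2$ then coincides with the count above.

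The main obstacle is the clean justification of injectivity in the first step — one must make sure that two distinct $\cU_{p_1,p_2}$'s (possibly coming from different flags or different hyperplanes $\Pi$) never produce the same unordered pair $\{v,-v\}$. This is guaranteed because $v$ encodes the set-theoretic decomposition $\cU_{p_1,p_2}=\cO_1\sqcup\cO_2$ through its sign pattern, so the map is injective on the nose. Once this is in hand, the proposition is an arithmetic consequence of Lemma \ref{basicstuff}(a).
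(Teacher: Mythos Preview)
Your counting of $|\cV_l|$ is correct and is exactly the paper's approach: Lemma~\ref{basicstuff}(a) gives $\tfrac{q+1}{2}q^3(q^2-1)$ sets $\cU_{p_1,p_2}$, each contributes the two distinct vectors $\pm v$, and this yields $q^3(q-1)(q+1)^2$. The injectivity check you add (that distinct $\cU_{p_1,p_2}$'s produce distinct pairs $\{v,-v\}$, since the sign pattern of $v$ recovers $\cO_1$ and $\cO_2$) is a welcome clarification that the paper leaves implicit.

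Where your write-up goes astray is in the final sentence about the dimension. Invoking Proposition~\ref{prop_4} only gives $\cV_l\subseteq V_1\perp V_5$; it does not establish $\dim(V_1\perp V_5)=q^3(q-1)(q+1)^2$, and in fact that equality is \emph{false}. From the first row of the matrix $\cQ$ in Theorem~\ref{prop_1} one reads off the multiplicities $\dim V_1=(q-1)(q+1)^2$ and $\dim V_5=\tfrac12 q^2(q+1)(q-1)^2$, so for instance at $q=3$ one has $\dim(V_1\perp V_5)=32+72=104$, while $|\cV_l|=864$. The appearance of ``$\dim(V_1\perp V_5)=$'' in the proposition's statement is a slip in the paper; note that the paper's own proof makes no attempt to address any dimension and only computes $|\cV_l|$. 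You should drop that clause rather than try to justify it --- the substantive content, which you have proved, is the value of $|\cV_l|$. This is also consistent with Proposition~\ref{prop_5}, where $\cV_l$ is shown to \emph{span} $V_1\perp V_5$ (a set of $864$ vectors spanning a $104$-dimensional space), not to be a basis for it.
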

\begin{proof}
This follows from Lemma \ref{basicstuff}, and taking into account that  $v=\chi_{\cO_1}-\chi_{\cO_2}\neq -v$. 
\end{proof}
Each of the minimal idempotents $E_i$, $i=0,\ldots,5$, of $\mathfrak X_l$ projects onto a common eigenspace $V_i$ of the adjacency matrices of the scheme. The vector space $\bR^{|\cX|}$, endowed with the standard inner product $\cdot\ $,  decomposes as $V_0\perp\cdots\perp V_5$, and a basis for it is the set of the characteristic vectors $\chi_m$ with $m\in\cX$. As usual,  $V_0$ is the space spanned by the all-ones vector $\j$.  Therefore, the set $\{\chi_mE_i: m\in\cX\}$ forms a basis for $V_i$, for $0\le i\le 5$, that is, $V_i=\mathrm{row}(E_i)$.

\begin{proposition}\label{prop_5}
 $\cV_l$  spans  $V_1	\perp V_5$.
\end{proposition}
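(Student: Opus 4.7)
The inclusion $\mathrm{span}(\cV_l)\subseteq V_1\perp V_5$ is immediate from Proposition~\ref{prop_4}. To establish the reverse inclusion, I would work with the symmetric ``Gram'' operator
\[
M:=\sum_{v\in\cV_l}v^{\top}v
\]
on $\bR^{|\cX|}$, whose image equals $\mathrm{span}(\cV_l)$. Since each $v\in V_1\perp V_5$, the image of $M$ is contained in $V_1\perp V_5$, and the task becomes showing that $M$ attains the full rank $m_1+m_5=\dim(V_1\perp V_5)$.

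Let $G_l$ denote the stabiliser of $l$ inside the projective orthogonal group preserving $Q^-(5,q)$. Because every relation $R'_i$ of $\mathfrak{X}_l$ is intrinsic to the pair $(l,Q^-(5,q))$, $G_l$ commutes with each $A_i$ and each $E_i$, hence preserves every common eigenspace $V_i$. The construction of $\cU_{p_1,p_2}$ from a flag $(B,l)$ is likewise $G_l$-equivariant, so $\cV_l$ is $G_l$-stable and $M$ is a $G_l$-equivariant, self-adjoint, positive semi-definite endomorphism whose image sits in $V_1\perp V_5$. I would then claim that
\[
M=\alpha E_1+\beta E_5
\]
for strictly positive scalars $\alpha,\beta$, from which $\mathrm{rank}(M)=m_1+m_5$ follows at once. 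Positivity is immediate from the trace identity
\[
\alpha\,m_i=\mathrm{tr}(ME_i)=\sum_{v\in\cV_l}\|vE_i\|^2>0,\qquad i\in\{1,5\},
\]
since each projection $vE_1=q^{-1}(v+\chi_{J_1}-\chi_{J_2})$ and each $vE_5$ is nonzero for every $v\in\cV_l$, as computed in the proof of Proposition~\ref{prop_4}.

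The hard part is therefore the structural identity $M=\alpha E_1+\beta E_5$. The cleanest route is representation-theoretic: prove that $V_1$ and $V_5$ afford non-isomorphic irreducible $G_l$-modules, so that Schur's lemma forces any $G_l$-equivariant self-adjoint operator on $V_1\perp V_5$ to be scalar on each summand. A more computational alternative is to evaluate the inner products $v\cdot v'$ over all pairs $v,v'\in\cV_l$ as a function of the $R'_i$-relation between the associated configurations $\cU,\cU'$; such a computation would exhibit $M$ as a member of the Bose--Mesner algebra of $\mathfrak{X}_l$, and since its image lies in $V_1\perp V_5$, the Bose--Mesner constraint then pins $M$ down to the two-dimensional space $\bR E_1+\bR E_5$. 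Either the verification of irreducibility on the representation-theoretic side, or the case analysis of the relative positions of two $\cU$-configurations on the combinatorial side, is the non-trivial step on which the entire argument hinges.
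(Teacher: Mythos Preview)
Your Gram operator $M=\sum_{v\in\cV_l}v^\top v$ is exactly the matrix $A^\top A$ in the paper's proof, and the overall reduction (show $M=\alpha E_1+\beta E_5$ with $\alpha,\beta>0$) is precisely the paper's strategy. The paper carries out your ``computational alternative'', but with an important correction to how you describe it. The entries of $M$ are indexed by pairs of \emph{lines} $m,n\in\cX$, not by pairs of vectors in $\cV_l$: one has $M_{mn}=\sum_{v\in\cV_l}v_m v_n$, and the task is to show that this depends only on the relation $R'_i$ containing $(m,n)$. Your phrase ``evaluate $v\cdot v'$ as a function of the $R'_i$-relation between the associated configurations $\cU,\cU'$'' conflates $A^\top A$ with $AA^\top$ and speaks of $R'_i$-relations between configurations, which are not vertices of the scheme. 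The paper fixes lines $m,n$ and, for each relation $R'_i$, counts (using the case analysis already done in Lemma~\ref{lem_11}) how many $v=\chi_{\cO_1}-\chi_{\cO_2}\in\cV_l$ have $m$ and $n$ in $\cO_1$ or $\cO_2$; this yields
\[
M=2\bigl((q-1)(q+1)^2 I - A_1 + qA_3 - (q+1)A_5\bigr)=2q^2\bigl(q^2 E_1 + 2(q+1)E_5\bigr),
\]
the last equality read off from the first eigenmatrix $\cP$. So the ``non-trivial step'' you flag is exactly the content of the paper's proof, and the raw material for it is Lemma~\ref{lem_11}.

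Your representation-theoretic route would be a genuine alternative, but it is not for free: the Remark after Theorem~\ref{prop_1} notes that $\mathfrak{X}_l$ is a \emph{proper} fusion of the Schurian scheme of $G_l$ on $\cX$ (that scheme has $4+(q-1)/2$ classes), so for $q>3$ some common eigenspaces $V_i$ split as $G_l$-modules. Invoking Schur's lemma therefore requires checking that $V_1$ and $V_5$ in particular remain irreducible, which is not established anywhere in the paper and would itself demand work comparable to the direct entry-by-entry computation.
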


\begin{proof}
Let $A$ be the matrix whose rows are the vectors $\chi_{\cO_1}-\chi_{\cO_2}$ in $\cV_l$, and  columns are indexed by  the elements of the scheme.   Let $M=A^\top A$. Note that it consists of the standard scalar products of columns of $A$.

For any line $m$, $\m$ will denote the column of $A$ pertaining to $m$. Index the elements of $\cV_l$ by $v_i$, where $i\in\{1,\ldots, q^3(q-1)(q+1)^2\}$.  Then, $\m_i=(v_i)_m$ and, by writing $v_i=\chi_{\cO_1}-\chi_{\cO_2}$, we have $\m_i=1$ if $m$ lies in $\cO_1$, $\m_i=-1$ if $m$ lies in $\cO_2$, $\m_i=0$ otherwise.

First we calculate what the diagonal entries of $M$ are. Note that  $\m\cdot \m=\sum_i{\m_i^2}$ equals the number of elements of $\cV_l$ whose support contains the line $m$. By using the standard double counting argument on pairs $(\cO_1,m)$, with $m\in\cO_1$, we get  $\m\cdot\m=2 (q-1) (q+1)^2$.

Now suppose $n$ is a line disjoint from $l$, not equal to $m$. To evaluate  $\m\cdot\n$, we take into account the equalities
\begin{equation}\label{eq_20}
    \m_i\n_i=\begin{cases}
    1& \text{if }\ \ m,n\in\cO_1\ \ \ \text{or }\ \ m,n\in\cO_2\\
    -1& \text{if }\ \ m\in\cO_1, n\in\cO_2\ \ \ \text{or }\ \ m\in\cO_2, n\in\cO_1\\
    0&\text{otherwise}
    \end{cases}
\end{equation}
and how $m$ and $n$ are related in the association scheme. We will use the calculations done in the proof of Lemma \ref{lem_11}. 

Assume $(m,n)\in R_1'$.   We first count in two different ways the number of triples $(\cO_1,m,n)$ with $m,n\in\cO_1$. We obtain
\[
c_1\eta_1=(q-1)(q-1) (q+1)^2,
\]
where $c_1$ is the number of the sets of type $\cO_1$ containing both $m$ and $n$. Hence, $c_1=q-1$. Similarly, for $m,n\in\cO_2$. 

We now count in different ways the number of triples $(\cO_1,m,n)$ with $m\in\cO_1$ and $n\in\cO_2$. It follows that 
\[
c_2\eta_1=q(q-1) (q+1)^2,
\]
where $c_2$ is the number of the sets of type $\cO_1\cup\cO_2$ such that $m\in\cO_1$ and $n\in\cO_2$. Hence, $c_2=q$. Similarly, for $m\in\cO_2$ and $n\in\cO_1$. This yields $\m\cdot\n=2(q-1)-2q=-2$.
 
Assume $(m,n)\in R_2'$. Consider the triples $(\cO_1,m,n)$ with $m,n\in\cO_1$. From the proof of Lemma \ref{lem_11}, we see that the number of such triples is zero. Similarly, for all other cases in \eqref{eq_20}. Hence, $\m\cdot\n=0$.

Assume $(m,n)\in R_3'$. We first count in different ways the number of triples $(\cO_1,m,n)$ with $m,n\in\cO_1$. We obtain
\[
c_3\eta_3=q(q-1)(q-1)(q+1)^2,
\]
where $c_3$ is the number of the sets of type $\cO_1$ containing both $m$ and $n$. Hence, $c_3=q$. Similarly, for $m,n\in\cO_2$. 

We now count in different ways the number of triples $(\cO_1,m,n)$ with $m\in\cO_1$ and $n\in\cO_2$. From the proof of Lemma \ref{lem_11}, this number is zero. Hence, $\m\cdot\n=2q$.

Assume $(m,n)\in R_4'$. Consider the triples $(\cO_1,m,n)$ with $m,n\in\cO_1$. From the proof of Lemma \ref{lem_11}, we see that the number of such triples is zero. Similarly, for all other cases in (\ref{eq_20}). Hence, $\m\cdot\n=0$.

Assume $(m,n)\in R_5'$. Then, the number of triples $(\cO_1,m,n)$ with $m,n\in\cO_1$ is zero. Similarly for $m,n\in\cO_2$.

We now count in different ways the number of triples $(\cO_1,m,n)$ with $m\in\cO_1$ and $n\in\cO_2$. It follows that 
\[
c_5\eta_5=(q^2-q)(q-1)(q+1)^2,
\]
where $c_5$ is the number of the sets of type $\cO_1\cup\cO_2$ such that $m\in\cO_1$ and $n\in\cO_2$. Hence, $c_5=q+1$. Similarly, for $m\in\cO_2$ and $n\in\cO_1$. This yields $\m\cdot\n=-2(q+1)$.
 
Therefore,
\[
M=2 \left( (q-1)  (q+1)^2 I- A_1+qA_3-(q+1)A_5\right)
\]
and, from the first eigenmatrix $\cP$, we see that
\begin{equation*}\label{eq_21}
M=2q^2(q^2E_1 + 2(q+1)E_5).
\end{equation*}
It is well known (c.f., \cite[Eq. (2.10)]{del}) that there exists an orthogonal matrix $U$ which simultaneously diagonalises each of the minimal idempotents of the scheme, i.e.,
\[
U^{-1}E_i U =\diag(0,\ldots,0,\underbrace{1,\ldots,1}_\text{$\dim V_i$},0,\ldots,0),
\]
for $i=0,\ldots,5$. This implies  $M$ itself takes a diagonal form with respect to the basis of the eigenvectors of $E_i$,  so that  
\[
\mathrm{row}(M)=\mathrm{row}(E_1)\perp \mathrm{row}(E_5)=V_1\perp V_5.
\]
 Therefore, $V_1\perp V_5=\mathrm{row}(M) \le \mathrm{row}(A)=\langle \cV_l \rangle$.  By Proposition \ref{prop_4}, $V_1\perp V_5=\langle \cV_l \rangle$.
\end{proof}
 \begin{theorem}\label{Ucharacterization1}
Let $\cS$ be a pseudo-oval of $Q^-(5,q)$. Then, the following are equivalent:
\begin{itemize}
\item[(a)] $\cS$ is a pseudo-conic;
\item[(b)] for any $l$ in $\cS$ and $B$ in $l$, each set $\cU_{p_1,p_2}$ constructed on $(B,l)$ meets $\cS\backslash\{l\}$ in $0$ or $2$ elements.
 \end{itemize}
 \end{theorem}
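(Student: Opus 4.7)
The plan is to reduce the combinatorial condition in (b) to an orthogonality statement against the spanning set $\cV_l$ of $V_1\perp V_5$ furnished by Proposition \ref{prop_5}, and then apply Theorem \ref{lem_13}.

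First I observe that, by Theorem \ref{lem_13}(b), $\cS$ is a pseudo-conic if and only if $\chi_{\cS\setminus\{l\}}$ is a $\{1,5\}$-design of $\mathfrak X_l$ for every $l\in\cS$; equivalently, $\chi_{\cS\setminus\{l\}}E_1=\chi_{\cS\setminus\{l\}}E_5=0$, i.e.\ $\chi_{\cS\setminus\{l\}}$ is orthogonal to $V_1\perp V_5$. Since $\cV_l$ spans $V_1\perp V_5$ by Proposition \ref{prop_5}, this is the same as requiring
\[
\chi_{\cS\setminus\{l\}}\cdot v = 0\qquad\text{for every } v=\chi_{\cO_1}-\chi_{\cO_2}\in\cV_l.
\]
Because the hyperplane $\Pi$ supporting $\cU_{p_1,p_2}$ does not contain $l$, no line of $\cU_{p_1,p_2}$ can equal $l$, so $\chi_{\cS\setminus\{l\}}\cdot v=|\cO_1\cap\cS|-|\cO_2\cap\cS|$.

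Next I prove the key bound $|\cO_i\cap\cS|\le 1$ for $i=1,2$, exploiting the pseudo-oval property. Suppose on the contrary that $m_1,m_2\in\cO_1\cap\cS$ are distinct. Each $m_j$ meets $p_1$ at a point $R_j\ne B$; and because any two distinct elements of a pseudo-oval are disjoint (otherwise no third element could extend their span to the full $6$-dimensional space), $R_1\ne R_2$. Hence $p_1=\langle R_1,R_2\rangle\subset\langle m_1,m_2\rangle$, and in particular $B\in\langle m_1,m_2\rangle$. Since $B\in l$, this gives $\dim(l\cap\langle m_1,m_2\rangle)\ge 1$, so
\[
\dim\langle l,m_1,m_2\rangle\le 2+4-1=5,
\]
contradicting the defining property of the pseudo-oval $\cS$ applied to the triple $l,m_1,m_2$. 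The same argument handles $\cO_2$.

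With this bound, the pair $(|\cO_1\cap\cS|,|\cO_2\cap\cS|)$ ranges in $\{0,1\}\times\{0,1\}$, so the condition $|\cU_{p_1,p_2}\cap\cS|=|\cO_1\cap\cS|+|\cO_2\cap\cS|\in\{0,2\}$ coincides exactly with the equality $|\cO_1\cap\cS|=|\cO_2\cap\cS|$, i.e.\ with $\chi_{\cS\setminus\{l\}}\cdot v=0$. Quantifying over all flags $(B,l)$ with $l\in\cS$ and all associated $\cU_{p_1,p_2}$, the equivalence (a) $\Leftrightarrow$ (b) follows. The main obstacle is precisely the uniform bound $|\cO_i\cap\cS|\le 1$: without it, a configuration of type $(2,0)$ would have sum $2$ but non-zero inner product with $v$, decoupling the $\{0,2\}$-intersection condition from the $\{1,5\}$-design condition; once the pseudo-oval hypothesis rules it out, the reduction to Proposition \ref{prop_5} and Theorem \ref{lem_13} is immediate.
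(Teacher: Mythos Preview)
Your proof is correct and follows essentially the same approach as the paper's: reduce via Theorem~\ref{lem_13}(b) to the $\{1,5\}$-design condition, invoke Proposition~\ref{prop_5} to replace orthogonality to $V_1\perp V_5$ by orthogonality to every $v=\chi_{\cO_1}-\chi_{\cO_2}\in\cV_l$, and then use the pseudo-oval property to bound $|\cO_i\cap\cS|\le 1$. The only difference is that you spell out the argument for this last bound (via $B\in p_1\subset\langle m_1,m_2\rangle$ forcing $\dim\langle l,m_1,m_2\rangle\le 5$), while the paper simply asserts it as an immediate consequence of $\cS$ being a pseudo-oval.
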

 \begin{proof}
By Theorem \ref{lem_13}, $\cS$ is a pseudo-conic if and only if
$\cS'=\cS\setminus\{l\}$ is a $\{1,5\}-$design of $\mathfrak X_l$, for any $l$ in $\cS$. 
Fix $l\in\cS$. By Proposition \ref{prop_5}, $\cV_l$ spans $V_1\perp V_5$. Hence, $\cS'$ is a $\{1,5\}-$design of  $\frak X_l$ if and only if $\chi_{\cS'}\cdot v=0$, for all $v\in \cV_l$. On the other hand,
\[
\chi_{\cS'}\cdot v=\chi_{\cS'}\cdot \chi_{\cO_1}-\chi_{\cS'}\cdot \chi_{\cO_2}=|\cS\cap \cO_1|-|\cS\cap \cO_2|.
\]
Since $\cS$ is a pseudo-oval,  we have $|\cS\cap \cO_1|,|\cS\cap \cO_2| \le 1$. Furthermore, 
\[
|\cS\cap (\cO_1\cup\cO_2)|=|\cS\cap \cO_1|+|\cS\cap \cO_2|,
\]
because  $\cO_1$ and $\cO_2$ are disjoint sets of lines.
Hence, $\chi_{\cS'}\cdot v=0$, for all $v\in \cV_l$, if and only if each $\cU_{p_1,p_2}$ meets $\cS'$ in $0$ or $2$ elements.
 \end{proof}
 Theorem \ref{Ucharacterization1} and the following proposition provide an additional way to characterise pseudo-conics.

\begin{proposition}\label{Ucharacterization2}
Let $\cS$ be a pseudo-oval of $Q^-(5,q)$ and $l\in \cS$. Let $A$ be the average
number of $\cU_{p_1,p_2}$ over all flags $(B,l)$ containing two distinct elements of $\cS\setminus\{l\}$.
Then, $A=q+1$ if and only if each $\cU_{p_1,p_2}$ meets $\cS\backslash\{l\}$ in  $0$ or $2$ elements.
\end{proposition}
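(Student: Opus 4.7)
The plan is a double-counting argument on the flags $(B,l)$ with $B\in l$. For each such $B$, let $N_B^{(k)}$ denote the number of $\cU_{p_1,p_2}$ constructed on the flag $(B,l)$ with $|\cU_{p_1,p_2}\cap(\cS\setminus\{l\})|=k$. The pseudo-oval property of $\cS$ forces $|\cS\cap\cO_i|\le 1$ for $i=1,2$, exactly as in the proof of Theorem~\ref{Ucharacterization1}, so $|\cU_{p_1,p_2}\cap(\cS\setminus\{l\})|\le 2$ and $k\in\{0,1,2\}$; in particular, the quantity being averaged in the definition of $A$ is $N_B^{(2)}$.

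The central step is to fix $B\in l$ and count the pairs $(\cU_{p_1,p_2},m)$ with $m\in\cU_{p_1,p_2}\cap(\cS\setminus\{l\})$ in two ways. Since every $m\in\cS\setminus\{l\}$ is disjoint from $l$ (lines of a pseudo-oval are pairwise skew, as recalled in the proof of Proposition~\ref{lem_18}), Lemma~\ref{basicstuff}(b) gives the number of $\cU_{p_1,p_2}$ on $(B,l)$ containing $m$. Together with $|\cS\setminus\{l\}|=q^2$ and the bound $|\cU_{p_1,p_2}\cap(\cS\setminus\{l\})|\le 2$, this yields an identity of the form
\[
N_B^{(1)}+2N_B^{(2)}=C,
\]
with $C$ an explicit constant depending only on $q$. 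Summing over the $q+1$ points $B\in l$ produces $\sum_{B\in l}N_B^{(1)}+2\sum_{B\in l}N_B^{(2)}=(q+1)C$.

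The second step is to rewrite $A$ as the natural normalisation of $\sum_{B\in l}N_B^{(2)}$, that is, the one for which the inherent upper bound on $A$ is exactly $q+1$. A useful remark here, following the dimension argument in the proof of Theorem~\ref{Ucharacterization1}, is that any two distinct elements $m,n\in\cS\setminus\{l\}$ are skew and satisfy $B\notin\langle m,n\rangle$, so $\langle B,m,n\rangle$ is already a hyperplane and is therefore equal to the unique $\Pi$ that could contain both $m$ and $n$ in a $\cU_{p_1,p_2}$ constructed on $(B,l)$; in particular at most one $\cU_{p_1,p_2}$ on $(B,l)$ contains a given pair $\{m,n\}$. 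Substituting the identity above into the resulting expression for $A$ gives
\[
A=(q+1)-D\sum_{B\in l}N_B^{(1)},
\]
for some positive constant $D=D(q)$. Since $N_B^{(1)}\ge 0$ for every $B\in l$, this bounds $A\le q+1$ with equality if and only if $N_B^{(1)}=0$ for every $B\in l$, which is precisely the condition that every $\cU_{p_1,p_2}$ meets $\cS\setminus\{l\}$ in $0$ or $2$ elements.

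The main technical point, beyond the standard double count, is to pin down the correct normalisation of $A$ so that the value $q+1$ emerges as the intrinsic maximum; once this is done, the equivalence is immediate from the linear identity and the non-negativity of $N_B^{(1)}$.
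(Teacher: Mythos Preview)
Your double–counting instinct is correct and is exactly what the paper does, but you have misread what $A$ is, and this is where the argument breaks down. In the paper, $A$ is the average, over ordered pairs $(m,n)$ of distinct elements of $\cS'=\cS\setminus\{l\}$, of the number of sets $\cU_{p_1,p_2}$ (over all $B\in l$) containing both $m$ and $n$; equivalently, by definition,
\[
\sum_i \mu_i(\mu_i-1)=|\cS'|(|\cS'|-1)\,A=q^2(q^2-1)\,A,
\]
where $\mu_i=|\cU^{(i)}_{p_1,p_2}\cap\cS'|$. You instead declare that ``the quantity being averaged in the definition of $A$ is $N_B^{(2)}$'', i.e.\ an average over flags. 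With that reading the extremal value is $\tfrac{1}{2}q^2(q^2-1)$, not $q+1$, which is why you are forced to leave the ``correct normalisation'' as an open technical point. But the normalisation is not yours to choose: it is fixed by the definition of $A$, and identifying it is precisely the content of the proposition.

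Once $A$ is interpreted correctly, the paper's proof is a two-line computation with no per-$B$ decomposition: Lemma~\ref{basicstuff}(b) gives $\sum_i\mu_i=q^2(q+1)(q^2-1)$, and the definition of $A$ gives $\sum_i\mu_i(\mu_i-1)=q^2(q^2-1)A$; subtracting yields $\sum_i\mu_i(2-\mu_i)=q^2(q^2-1)(q+1-A)$, and since each $\mu_i\le 2$ the left side is non-negative with equality iff every $\mu_i\in\{0,2\}$. Your per-flag identity $N_B^{(1)}+2N_B^{(2)}=C$ is not justified by Lemma~\ref{basicstuff}(b), which only gives the total over all $B$; and the uniqueness-of-$\Pi$ remark, while true, is not needed once $A$ is handled via $\sum_i\mu_i(\mu_i-1)$ directly.
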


\begin{proof}
Let $\mu_i$ be the number of lines of $\cS'=\cS\setminus\{l\}$ contained in the $i-$th set of type $\cU_{p_1,p_2}$.
 We count in two ways the number of pairs $(m,\cU_{p_1,p_2})$ such that $m\in\cU_{p_1,p_2}\cap\cS'$. From Lemma \ref{basicstuff}(b), this number is
\[
\sum_i \mu_i = |\cS'|(q+1)(q^2-1) = q^2(q+1)(q^2-1).
\] 
If we double count triples $(m,n,\cU_{p_1,p_2})$ where $m$ and $n$ are distinct elements of $\cS'$ lying in $\cU_{p_1,p_2}$, we see that 
\[
\sum_i\mu_i(\mu_i-1)=\sum_{m\in \cS'}\sum_{n\in \cS'\backslash\{m\}}|\{ \cU_{p_1,p_2} \colon m,n \in \cU_{p_1,p_2}\}|
=q^2(q^2-1) A.
\]
Hence, 
\[
\sum_i \mu_i(2-\mu_i)=\sum_i \mu_i-\sum_i \mu_i(\mu_i-1)=q^2(q^2-1)(q+1-A).
\]
Therefore, $A=q+1$ precisely when each $\mu_i$ is $0$ or $2$.
\end{proof}

\section{Concluding remarks}\label{sec_7}
In \cite{bokp}, Theorem 5.1 shows that  any special set of $H(3,9)$ is of CP-type, or dually, any pseudo-oval in $Q^-(5,3)$ is a pseudo-conic. By using \textsf{GAP} and the mixed integer linear programming software \textsf{Gurobi} \cite{gurobi}, we explored the case $q=5$ and $q=7$. Indeed, the theory developed in this paper aided in the design of the computation.

We look at a given pseudo-oval as a set $\cS$ of lines of $Q^-(5,q)$ such that every non-degenerate hyperplane contains $0$ or $2$ elements of $\cS$ by Proposition \ref{lem_18}. As we have done throughout this paper, we let $l$ be a fixed line of $Q^-(5,q)$. Let $M$ be the incidence matrix with rows indexed by lines of $Q^-(5,q)$ disjoint from $l$, and columns indexed by the non-degenerate hyperplanes not containing $l$. Then, we are seeking a solution to
\begin{equation}\label{tactical}
\bvec{x}M=2\bvec{y},
\end{equation}
where $\bvec{x}=(x_1,\ldots,x_{q^5})$ and  $\bvec{y}=(y_1,\ldots,y_{q^5-q^3})$, with $x_i,y_i\in\{0,1\}$, and $\sum{x_i}=q^2$. In fact, $\bvec{x}$ will be the characteristic vector for $\cS\setminus \{l\}$, with $\cS$ a pseudo-oval in $Q^-(5,q)$, and $\bvec{y}$ will be the characteristic vector for the set of non-degenerate hyperplanes not containing $l$, sharing two elements with $\cS\setminus\{l\}$.

There are a variety of approaches to solving equations such as \eqref{tactical}. In particular, the system of equations can be viewed either as an {\em integer linear program} or as a {\em constraint satisfaction problem}. We used the software \textsf{Gurobi}  for this problem. 

A linear program attempts to find values for variables $x_1, x_2, \ldots, x_n$ that maximise (or minimise) a linear objective function subject to linear constraints. An {\em integer linear program}, for short {\em integer program}, is a linear program with the additional restriction that the variables must take integral values. Solving \eqref{tactical} does not involve any maximising or minimising, so the objective function can be taken to be a constant, say 0. Then, any feasible solution to the following integer program yields a set of lines with the property:
\begin{equation}\label{linpro}
	\begin{array}{lrcl}
	\textrm{Maximise:}   & 0 &      &   \\
	\textrm{subject to:} & \bvec{x}B- 2\bvec{y}   & =    & 0\\
			& \sum_i x_i &=& q^2 \\
			    & x_i,y_j     & \in & \{0, 1\}.
	\end{array}
\end{equation}
There is one more ingredient we need to take into account. For a fixed set $U=\cU_{p_1,p_2}$, let $\bvec{u}$ be the characteristic vector for it. We assume  that the set $U$ meets $\cS\setminus\{l\}$ in precisely 1 element. This adds the linear constraint $\sum{u_i x_i} = 1$. For $q=3,5,7$, we found that the linear program \eqref{linpro} is infeasible\footnote{We would like to thank Jesse Lansdown for verifying our computations.} for each $\cU_{p_1,p_2}$. Therefore, in these cases, every set $\cS\setminus\{l\}$ is forced to meet the sets $\cU_{p_1,p_2}$   in 0 or 2 elements, and so  every $\cS$ is a pseudo-conic by Theorem \ref{Ucharacterization1}.

The above computational results suggest the following conjecture:
\begin{conjecture}
Every pseudo-oval in $Q^-(5,q)$ is a pseudo-conic, for any $q$ (odd). 
\end{conjecture}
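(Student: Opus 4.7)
The plan is to establish, for every pseudo-oval $\cS\subset Q^-(5,q)$ and every line $l\in\cS$, that the set $\cS':=\cS\setminus\{l\}$ is a $\{1,5\}$-design of the association scheme $\mathfrak X_l$; by Theorem \ref{lem_13}(b) this is equivalent to $\cS$ being a pseudo-conic. In view of Theorem \ref{Ucharacterization1} and Proposition \ref{Ucharacterization2}, the task reduces to proving that the average $A$ of Proposition \ref{Ucharacterization2} equals $q+1$. By Theorem \ref{lem_13}(a) the inner distribution of $\cS'$ has the form $(1,0,0,0,x,q^2-1-x)$ for some integer $x\ge 0$, and the conjecture asserts $x=0$.

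The first step is a geometric computation of the counts that enter into $A$. Since $\cS$ is a pseudo-oval, any three of its elements span $\PG(5,q)$, so for $m,n\in\cS'$ a Grassmann dimension count forces $l\cap\langle m,n\rangle=\varnothing$; hence $\Pi=\langle B,m,n\rangle$ is the unique hyperplane through $B\in l$ containing $\{m,n\}$. When $\Pi$ is non-degenerate, intersecting $m$ and $n$ with $B^\perp$ produces two (possibly equal) generators $g_m,g_n$ of the quadratic cone $B^\perp\cap\Pi\cap Q^-(5,q)$, and one verifies that $\{m,n\}\subset\cU_{p_1,p_2}$ precisely when $\{p_1,p_2\}=\{g_m,g_n\}$ with $g_n=\tilde\sigma(g_m)$ (when $g_m\ne g_n$) or $p_1\in\{g_m,\tilde\sigma(g_m)\}$ (when $g_m=g_n$), where $\tilde\sigma$ is the involution of Lemma \ref{lem_8}(iii). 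By the generous transitivity of the stabiliser of $l$ in the collineation group of $Q^-(5,q)$ on each relation class of $\mathfrak X_l$ (dual to the remark following Theorem \ref{prop_1}), summing over $B\in l$ yields a count $\nu(l,m,n)$ that depends only on whether $(m,n)\in R'_4$ or $(m,n)\in R'_5$; denote these constants by $\nu_4,\nu_5$.

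Splitting the double count in the proof of Proposition \ref{Ucharacterization2} according to the relations $R'_4,R'_5$ then yields an identity of the shape
\[
(q^2-1)A=(q^2-1-x)\nu_5+x\nu_4.
\]
Specialising to a pseudo-conic (where $x=0$ and $A=q+1$ are already known) pins down $\nu_5=q+1$, while the upper bound $A\le q+1$ is automatic from Proposition \ref{lem_18}, which forces the counts $\mu_i$ of Proposition \ref{Ucharacterization2} to lie in $\{0,1,2\}$. Combining these, one arrives at the inequality $x(\nu_4-(q+1))\le 0$; if the reverse inequality $\nu_4\ge q+1$ can also be established, the two together force $x=0$ and the conjecture follows.

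The main obstacle is the evaluation of $\nu_4$. Unlike $\nu_5$, there is no \emph{a priori} configuration that pins it down, and a direct computation seems to require a delicate case analysis as $B$ varies on $l$: the hyperplane $\Pi=\langle B,m,n\rangle$ can be degenerate for some $B$, the generators $g_m,g_n$ may coincide or differ, and the compatibility condition involving $\tilde\sigma$ itself changes with $\Pi$. A complementary route, and the one I expect to succeed uniformly in $q$, is to reinforce the single linear constraint above with the full family of design-orthogonality relations carried by the vectors $\chi_{\cO_1}-\chi_{\cO_2}$ of Proposition \ref{prop_4}, which span $V_1\perp V_5$ by Proposition \ref{prop_5}. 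For $q\in\{3,5,7\}$ integer programming over this family certifies $x=0$; replacing this case-by-case verification with a structural polynomial identity among the inner products $\chi_{\cS'}\cdot(\chi_{\cO_1}-\chi_{\cO_2})$, valid for all odd prime powers $q$, is in my view where a genuinely new idea is required.
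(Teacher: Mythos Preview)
This statement is a \emph{conjecture} in the paper, not a theorem: the paper offers no proof, only computational verification for $q\in\{3,5,7\}$ via integer programming. So there is no ``paper's own proof'' to compare against; you are attempting to settle an open problem.

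Your strategy contains a concrete and fatal error. The constant you call $\nu_4$ is not an unknown to be bounded from below by $q+1$; it is computable directly from the paper's Lemma~\ref{lem_11} and is in fact equal to $0$. Indeed, in the proof of Proposition~\ref{prop_5} the authors show that for $(m,n)\in R'_4$ one has $\mathbf m\cdot\mathbf n=0$, and more specifically that every one of the four cases in \eqref{eq_20} contributes zero: no set $\cU_{p_1,p_2}$ contains a pair $(m,n)$ in relation $R'_4$. Hence $\nu_4=0$, while $\nu_5=q+1$ (this is the constant $c_5$ computed there). Plugging these values into your identity $(q^2-1)A=(q^2-1-x)\nu_5+x\,\nu_4$ gives
\[
A=(q+1)\,\frac{q^2-1-x}{q^2-1},
\]
so the inequality $A\le q+1$ you derive is equivalent to $x\ge 0$, which is vacuous, and the desired equality $A=q+1$ is \emph{equivalent} to $x=0$. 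In other words, once $\nu_4$ and $\nu_5$ are correctly evaluated, your counting identity simply restates Theorem~\ref{Ucharacterization1}; it cannot, even in principle, yield the implication you seek, because the hoped-for inequality $\nu_4\ge q+1$ is false.

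Your closing paragraph correctly senses that the single averaged constraint is not enough and that one must exploit the full family $\cV_l$ of vectors $\chi_{\cO_1}-\chi_{\cO_2}$. But note that Proposition~\ref{prop_5} already shows $\langle\cV_l\rangle=V_1\perp V_5$, so ``using all of $\cV_l$'' is exactly the same as asking that $\chi_{\cS'}E_5=0$, i.e.\ that $x=0$. There is no hidden extra leverage in that family beyond what Theorem~\ref{lem_13} already encodes. A proof of the conjecture will require input genuinely outside the association-scheme framework assembled in the paper.
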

Results \ref{lem_18} and \ref{Ucharacterization1}  allow us to state the above conjecture  as follows:

\begin{conjecture}\label{conj_2}
Let $\cS$ be a set of $q^2+1$ lines of $Q^-(5,q)$, $q$ odd, such that every non-degenerate hyperplane contains 0 or 2 elements of $\cS$. Then, each $\cU_{p_1,p_2}$ meets $\mathcal{S}\backslash\{l\}$ in 0 or 2 elements, for every $l\in\cS$. 
\end{conjecture}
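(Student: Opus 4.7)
The plan is to reduce Conjecture \ref{conj_2} via Theorem \ref{Ucharacterization1} to an intersection-size problem and then attempt a double-counting attack through Proposition \ref{Ucharacterization2}. Fix a pseudo-oval $\cS$, a line $l\in \cS$, write $\cS'=\cS\setminus\{l\}$, and let $\cU_{p_1,p_2}=\cO_1\cup\cO_2\subset\Pi$ be constructed on a flag $(B,l)$. Since $\cU_{p_1,p_2}$ lies inside the non-degenerate hyperplane $\Pi$, Proposition \ref{lem_18} immediately yields $|\cS\cap\Pi|\in\{0,2\}$, and as $l\not\subset\Pi$ this forces $|\cS'\cap\cU_{p_1,p_2}|\le 2$. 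The real task is to exclude the value $1$.

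A first refinement uses the pairwise disjointness of pseudo-oval elements: I claim $|\cS'\cap\cO_j|\le 1$ for $j=1,2$. Indeed, if $m,m'\in \cS$ both lay in $\cO_j$, they would meet $p_j$ at two distinct points, forcing $p_j\subset m+m'$ and hence $B\in m+m'$; combined with $\dim(m+m')=4$, this gives $\dim\langle l,m,m'\rangle\le 5$, contradicting the pseudo-oval span condition. Consequently $|\cS'\cap\cU_{p_1,p_2}|=1$ occurs precisely when exactly one of $\cO_1,\cO_2$ meets $\cS'$; writing $m\in\cO_1\cap\cS'$, the partner line $m'\in\cS\cap\Pi$ guaranteed by Proposition \ref{lem_18} must lie in the subset $V$ (lines in $\Pi$ disjoint from $p_1\cup p_2$), and by Proposition \ref{lem_12} the triple $(l,m,m')$ fails to be in perspective.

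The natural next step is to compute the average $A$ in Proposition \ref{Ucharacterization2} by double counting ordered pairs $(m,n)\in(\cS')^2$ and the $\cU_{p_1,p_2}$ containing both. For $(m,n)\in R_5'$, the span condition $\dim\langle l,m,n\rangle=6$ forces $\langle m,n\rangle\cap l=0$, so for each $B\in l$ the hyperplane $\Pi_B=\langle B,m,n\rangle$ is well-defined; when non-degenerate, the generators $\pi_m$ and $\pi_n$ of $\Pi_B$ through $B$ meeting $m$ and $n$ form a $\tilde\sigma$-pair by Proposition \ref{lem_12}, uniquely determining a $\cU_{p_1,p_2}$ containing both. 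For $(m,n)\in R_4'$ the same reasoning produces no such $\cU_{p_1,p_2}$. Summing yields $A\le (q+1)\,a_5/(q^2-1)$, so the desired equality $A=q+1$ would follow from $a_4=0$ together with non-degeneracy of all relevant $\Pi_B$.

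The main obstacle is that this double-counting merely rewrites the conjecture as $a_4=0$ (equivalent, via Thas's criterion, to the absence of non-perspective triples in $\cS$) without providing any new constraint. The Delsarte linear programming bound on $\mathfrak X_l$ only yields $0\le a_4\le q^2-1$, leaving the conjecture wide open. A successful proof therefore requires a fresh ingredient—perhaps a refined eigenvalue inequality for $\chi_{\cS'}$ in the Bose--Mesner algebra of $\mathfrak X_l$ beyond the Delsarte bound, or a geometric obstruction analogous to Segre's theorem for ovals of $\PG(2,q)$—to rule out $R_4'$-pairs. Pending such an input, the conjecture appears inaccessible by purely scheme-theoretic methods, consistent with the authors' recourse to integer programming for $q\in\{3,5,7\}$ in Section \ref{sec_7}.
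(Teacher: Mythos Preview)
The statement is labelled a \emph{conjecture} in the paper and is not proved there. It is presented merely as a reformulation, via Proposition~\ref{lem_18} and Theorem~\ref{Ucharacterization1}, of the conjecture that every pseudo-oval in $Q^-(5,q)$ is a pseudo-conic; the only evidence the paper offers is the computational verification for $q\in\{3,5,7\}$ by integer linear programming in Section~\ref{sec_7}. There is thus no proof in the paper to compare against.

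Your analysis is accurate and consistent with the paper's own position. The preliminary reductions you carry out---the bound $|\cS'\cap\cU_{p_1,p_2}|\le 2$ from Proposition~\ref{lem_18}, the refinement $|\cS'\cap\cO_j|\le 1$ via the span condition, and the identification of the bad case with a partner line lying in $V$ and giving a non-perspective triple---are all correct. Your double count then rediscovers exactly the equivalence already encoded in Theorem~\ref{lem_13} and Theorem~\ref{Ucharacterization1}: excluding $|\cS'\cap\cU_{p_1,p_2}|=1$ is the same as forcing $a_4=0$, which is the pseudo-conic condition. Your conclusion that the Bose--Mesner machinery alone provides no further leverage, and that a genuinely new ingredient is required, matches the paper's assessment; you have not missed anything the authors prove.
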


\end{document}